\documentclass[a4paper,10pt]{amsart}
\usepackage[utf8]{inputenc}

\usepackage{amssymb,amsmath,amsthm}
\usepackage{graphicx}
\usepackage{subfigure}
\usepackage{overpic}
\usepackage[table]{xcolor}

\usepackage{chngcntr}
\usepackage{lscape}
\usepackage{rotating}
\usepackage{booktabs}
\usepackage{siunitx}

\usepackage{epstopdf}

\usepackage{multirow}
\usepackage{calc}

\usepackage{tikz}
\usepackage{tikz-cd}
\usepackage{pgfplots}
\usepackage{pgfplotstable}
\usetikzlibrary{3d,calc}
\usetikzlibrary{shapes, spy, scopes, matrix, datavisualization.formats.functions, arrows, datavisualization, plothandlers, plotmarks, svg.path, positioning}
\pgfplotsset{compat=newest}
\pgfplotsset{plot coordinates/math parser=false}
\pgfplotsset{scaled y ticks = false, tick label style={/pgf/number format/fixed}}
\pgfplotsset{scaled x ticks = false, tick label style={/pgf/number format/fixed}}
\pgfplotsset{compat=1.6}

\pgfplotsset{colormap/viridis reversed/.style={/pgfplots/colormap={viridisrev}{
  rgb255=(253,231,37) rgb255=(94,201,98) rgb255=(33,145,140)
  rgb255=(59,82,139) rgb255=(68,1,84)}}}

\usepackage{float}

\usepackage[bookmarksopen, pdfstartview={FitV}, linkbordercolor=white]{hyperref}

\usepackage{algorithm}
\usepackage{algpseudocode}
\usepackage{rotating}

\usepackage[capitalize,nameinlink]{cleveref}
	
\newtheorem{theorem}{Theorem}[section]
\newtheorem{definition}[theorem]{Definition}
\newtheorem{lemma}[theorem]{Lemma}
\newtheorem{remark}[theorem]{Remark}
\newtheorem{corollary}[theorem]{Corollary}
\newtheorem{proposition}[theorem]{Proposition}

\newcommand{\BIGOP}[1]{\mathop{\mathchoice%
{\raise-0.22em\hbox{\huge $#1$}}%
{\raise-0.05em\hbox{\Large $#1$}}{\hbox{\large $#1$}}{#1}}}

\newcommand{\BIGboxplus}{\mathop{\mathchoice%
{\raise-0.35em\hbox{\huge $\boxplus$}}%
{\raise-0.15em\hbox{\Large $\boxplus$}}{\hbox{\large $\boxplus$}}{\boxplus}}}

\def\N{\mathbb N}

\def\R{\mathbb R}

\def\R{\mathbb{R}}
\def\N{\mathbb{N}}

\def\cA{\mathcal{A}}

\def\cF{\mathcal{F}}

\def\cH{\mathcal{H}}
\def\cI{\mathcal{I}}
\def\cJ{\mathcal{J}}

\def\cO{\mathcal{O}}

\def\cV{\mathcal{V}}
\def\cW{\mathcal{W}}

\def\aa{\boldsymbol{a}}

\def\cc{\boldsymbol{c}}

\def\ee{\boldsymbol{e}}
\def\ff{\boldsymbol{f}}

\def\uu{\boldsymbol{u}}

\def\xx{\boldsymbol{x}}
\def\yy{\boldsymbol{y}}

\def\11{\mathbf{1}}

\def\00{\boldsymbol{0}}

\def\aalpha{\boldsymbol{\alpha}}
\def\bbeta{\boldsymbol{\beta}}

\def\iiota{\boldsymbol{\iota}}
\def\kkappa{\boldsymbol{\kappa}}
\def\llambda{\boldsymbol{\lambda}}

\def\nnu{\boldsymbol{\nu}}
\def\xxi{\boldsymbol{\xi}}

\def\ssigma{\boldsymbol{\sigma}}
\def\ttau{\boldsymbol{\tau}}

\def\oomega{\boldsymbol{\omega}}

\def\OOmega{\boldsymbol{\Omega}}

\def\transpose{\text{T}}

\def\spn{\operatorname{span}}

\title[Anisotropic Kernel-based Multilevel Interpolation]{Anisotropic Kernel-based Multilevel Interpolation of High-Dimensional Functions on Sparse Grids}

\author{R\"udiger Kempf}

\begin{document}

\begin{abstract}
The \emph{tensor product multilevel method (TPML)} approximates high-dimensional functions from scattered data on general bounded domains by combining Smolyak's sparse grid construction with the kernel-based multilevel interpolation, coupling the kernel scale to the fill distance at each level so that condition numbers stay uniformly bounded. Its original formulation depended only implicitly on the target function, which precluded numerical evaluation. Using a recent nodal representation of the low-dimensional multilevel operators, we derive an explicit, computable representation of the high-dimensional interpolant. On this basis we prove error estimates in mixed-regularity Sobolev and continuity norms, extending the existing $ L_2 $-theory, and we characterise the full range of anisotropy weights that attain the optimal rate against the number of degrees of freedom. Three explicit weight strategies follow, equilibrating the accuracy, the degrees of freedom, or the cost-benefit ratio. Numerical experiments in up to ten dimensions illustrate the theory.
\end{abstract}

%
%

\maketitle

\section{Introduction}
\label{sec:Introduction}
High-dimensional problems emerge in a wide range of applications from physics, engineering and even financial science. Typical examples comprise solutions of the Schr\"odinger equations, the Hamilton-Jacobi-Bellman equation, the Fokker-Planck equation or the Black-Scholes equation. Another application is parametric partial differential equations in the context of uncertainty quantification, where the solution or more generally the quantity of interest is then a high-dimensional function of the parameters.

One limiting factor in treating these high-dimensional problems is the curse of dimensionality \cite{Bellman1957}, that is, to achieve an approximation error of $ \varepsilon $ one needs the number of data to grow exponentially in the dimension. This is already for a moderately high dimension not feasible. An efficient tool to alleviate this curse are sparse grid techniques, see for example \cite{Bungartz2004} for an overview. The underlying procedure, originally developed by Smolyak \cite{Smolyak1963} for quadrature, is frequently used and studied, see \cite{Barthelmann2000,Conrad2013,Ullrich2008,Nobile2008} among many others. An equivalent representation of the approximation operator, in case of interpolation, is the combination technique \cite{Bungartz1994,Garcke2007,Garcke2009,Griebel1990,Hegland2007,Hegland2016}, allowing easier implementation and computation.

The basic idea of Smolyak's construction is to combine sequences of approximation operators on low-dimensional domains in an orderly fashion to obtain a high-dimensional approximation method which exhibits nearly the same approximation features as the building-block operators with an alleviated curse of dimensionality.

Most often, these sparse grid techniques are limited to the Cartesian product of one dimensional intervals and approximation schemes are either spline or polynomial interpolation.
In contrast, the use of kernel methods in the context of sparse grids have become more and more popular in recent years, see, e.g., \cite{Buettner2025,griebel2025:2,Griebel2025,Kempf2023}. These methods have the advantage that one can use more general low-dimensional domains and scattered data which may be important if the solution of a space-time partial differential equation is studied. We refer to \cite{Wendland2004} for a general introduction to kernel methods.

While \cite{griebel2025:2,Griebel2025} consider standard single-level kernel interpolation, it does not leverage the intrinsic low-dimensional multiscale structure of sparse grids. They employ kernels with a single, fixed correlation length. This means that their kernel matrices become increasingly ill-conditioned as the number of points grows. In \cite{Kempf2023}, the present author proposed the \emph{tensor product multilevel method (TPML)}. It combines the Smolyak approach with the kernel-based multilevel method, a residual correction scheme that uses the sequence of low-dimensional data point sets. This method is thoroughly studied, see, e.g., \cite{Avesani2025,LeGia2010,Wendland2010,Wendland2017} and provides a high-order and stable interpolation process for low-dimensional domains using scattered data. The basic idea is, that on each level, we approximate the residual of the previous level with a level-dependently scaled kernel. This allows us to control the condition numbers of the resulting linear systems. The global approximant is then taken as the sum of all those detail approximations. The downside, however, is that the standard formulation only contains an implicit dependence of the data-generating function which makes tensorization impossible. First steps towards an explicit dependence were taken in \cite{Kempf2023} and \cite{Buettner2025}. Finally, \cite{Gollwitzer2026} derived the formula for the multilevel operator we leverage here to obtain a new representation of the TPML interpolation operator for high-dimensional functions. This representation allows a numerical feasible computation of the interpolant, improving on the results of \cite{Buettner2025,Kempf2023}. Additionally, we build upon results from \cite{Griebel2012,Griebel2013} and derive a new error analysis for TPML in mixed regularity Sobolev and continuity norms. We are also able to discuss the optimal choice of weights that determine the anisotropy in the sparse grid approach, extending the discussion of \cite{Kempf2023}. Because the representation of the high-dimensional interpolation operator is now numerically computable, we are able to test the method in three settings. First, we demonstrate the derived bounds on the numerical costs in a standard, isotropic setting in up to twelve dimensions. Second, we test the three choices for the weights in a $1+2+3$-dimensional example supporting the derived error bounds. In this example, we particularly use a tensor product function of finite smoothness. Finally, in the third example, we demonstrate that the method is independent of the structure of the target function by approximating a ten dimensional, non-tensor product Genz test function.

The rest of this paper is structured as follows: In \Cref{sec:Preliminaries} we repeat the necessary building blocks for the TPML. We introduce the kernel-based multilevel method and quote the needed representation of the interpolation operator. Furthermore, we give the results on tensor products of spaces and operators we use in the error analysis later. We also give a short overview over Smolyak's method. In \Cref{sec:ConvergenceAnalysisAndOptimalWeights} we first derive the new representation of the TPML operator and derive the new convergence results. Additionally, we discuss the selection of optimal weights. We give a short overview over the implementation and give the algorithms to compute the TPML interpolant in \Cref{sec:Implementation}. Finally, we give extensive numerical tests in \Cref{sec:NumericalExperiments}. To end, we give a conclusion and outlook in \Cref{sec:Conclusion}.

\section{Preliminaries}\label{sec:Preliminaries}
We start with repeating the necessary building blocks for the TPML: The kernel-based multilevel method, some basic facts about tensor products of spaces and operators and Smolyak's construction of a high-dimensional approximation operator.

\subsection{Kernel-based Multilevel Interpolation}
Let $ \Omega \subseteq \R^n $, $ n \in \N $, be a domain and $ X = \{\xx_1, \dots, \xx_N \} \subseteq \Omega $ a discrete set of sites. We work in a \emph{reproducing kernel Hilbert space (RKHS)} $ \cH $ with unique reproducing kernel $ K: \Omega \times \Omega \to \R $. We are particularly interested in \emph{translation invariant} and \emph{radial kernels}, i.e., $ K(\xx,\yy) = \Phi(\xx - \yy) = \phi(\|\xx - \yy \|_2) $ for functions $ \Phi: \R^n \to \R $ and $ \phi: [0, \infty) \to \R $. Moreover, if $ \sigma > n/2 $ and $ \Phi \in L_1(\R^n) \cap C(\R^n) $ is such that its Fourier transform satisfies
\begin{align}\label{eq:FourierDecay}
 c_1 \left(1+ \| \xxi \|_2^2 \right)^{-\sigma} \leq \widehat{\Phi}(\xxi) \leq c_2 \left( 1 + \| \xxi \|_2^2 \right)^{-\sigma}, \quad \xxi \in \R^n,
\end{align}
with constants $ c_1, c_2 > 0 $, then the translation invariant kernel $ \Phi $ is the reproducing kernel of $ H^{\sigma}(\R^n) $ \cite{Wendland2004}. Typical examples, where \eqref{eq:FourierDecay} holds, are Mat\'ern \cite{Matern1986} and Wendland's compactly supported kernels \cite{Wendland1995,Wendland2004}. 

In applications, we scale the radial kernel $ \Phi $ by a \emph{scaling parameter} $ \delta > 0 $, and define
\begin{align}\label{eq:DefRescaledRBF}
 \Phi_{\delta} := \delta^{-n} \Phi(\cdot / \delta).
\end{align}
The parameter $ \delta $ controls the correlation length of kernel, and therefore, the localization of the resulting kernel basis functions. For a given set of sites $ X $, we consider the space
\begin{align*}
 W := \spn\{\Phi_{\delta}(\cdot - \xx_i) \; : \; \xx_i \in X \}.
\end{align*}
Then corresponding kernel interpolant is the unique function in $ W $ that reproduces prescribed function values at the sites.

\begin{definition}\label{def:Interpolation}
 Let $ X = \{ \xx_1, \dots, \xx_N \} \subseteq \Omega \subseteq \R^n $ be a set of sites. For $ f \in C(\Omega) $, the kernel interpolant with respect to $ X $ and $ \Phi_{\delta} $ is defined as
 \begin{align*}
  I_{X,\Phi_{\delta}}(f) = \sum_{i=1}^{N} c_i \Phi_{\delta}(\cdot - \xx_i), \quad f \in C(\Omega),
 \end{align*}
 where the coefficient vector $ \cc \in \R^N $ is determined by the interpolation conditions $ I_{X,\Phi_{\delta}}(f)(\xx_i) = f(\xx_i) $, $ 1 \leq i \leq N $, i.e., $ \cc $ is the solution of the linear system 
 \begin{align}\label{eq:InterpolationProblem}
  M_{X,\Phi_{\delta}} \cc = \ff,
 \end{align}
with $ M_{X,\Phi_{\delta}} = \left( \Phi_{\delta}(\xx_i - \xx_k) \right) $ and $ \ff = (f(\xx_i)) $.
\end{definition}

For the classes of Mat\'ern and Wendland kernels the system in \eqref{eq:InterpolationProblem} is uniquely solvable.

The choice of the scaling parameter $ \delta $ involves a fundamental trade-off. If $ \delta $ is kept fixed while the number of sites increases, the kernel basis functions become increasingly redundant and the condition number of $ M_{X,\Phi_\delta} $ typically deteriorates. On the other hand, choosing $ \delta $ proportional to the fill distance in order to maintain a favorable condition number can prevent convergence as the discretization is refined. This is commonly referred to as the \emph{trade-off principle} \cite{Wendland2017}. The multilevel method resolves this tension.

The idea is to apply a residual correction scheme to a hierarchy of kernel approximations \cite{Floater1996}: Take a sequence of data point sets $ X_1, X_2, \dots \subseteq \Omega $ with \( X_{\ell} := \{ \xx_{\ell,1}, \dots, \xx_{\ell, N_{\ell}} \} \), radial kernels $ \Phi_1, \Phi_2, \dots $ and set $ W_{\ell} := \spn\{ \Phi_{\ell}( \cdot - \xx_k) \; : \; \xx_k \in X_{\ell} \}$. The method then sets $ f_0 = 0 $, $ e_0 = f $, and for $ \ell = 1, 2, \dots $ computes a local approximant $ s_\ell \in W_\ell $ to the residual of the previous level $ e_{\ell-1} $, and finally updates $ f_\ell = f_{\ell-1} + s_\ell $ and $ e_\ell = e_{\ell-1} - s_\ell $.

The choice of $ \Phi_{\ell} $ is usually coupled to $ X_{\ell} $. To this end we define the \emph{fill distance} of $ X_{\ell} $ 
\begin{align*}
 h_{\ell} := h_{X_{\ell}, \Omega} := \sup_{\xx \in \Omega} \min_{1 \leq i \leq N_{\ell}} \| \xx - \xx_{\ell,i} \|_2.
\end{align*}
For results later, we need the sets $ X_{\ell} $ to be quasi-uniform: Define the separation radius $ q_{\ell} $ of $ X_{\ell} $ as 
\begin{align*}
 q_{\ell} := q_{X_{\ell}} := \frac{1}{2} \min_{1 \leq i \neq j \leq N_{\ell}} \| \xx_i - \xx_j \|_2.
\end{align*}
The sequence of point sites $ (X_{\ell}) $ is \emph{quasi-uniform} if there is a constant $ c_{qu} > 0 $ such that the \emph{mesh-ratio} $ \rho_{\ell} := h_{\ell}/q_{\ell} $ satisfies $ \rho_{\ell} < c_{qu} $, for all $ \ell \in \N $.

Although the sets $ X_1,X_2, \dots$ do not need to be nested, the fill distances need to decay uniformly with a \emph{refinement parameter} $ \mu \in (0,1) $, so that $ h_{\ell+1} \approx \mu h_\ell $ for all $ \ell $. Most often, we fix one radial kernel $ \Phi $ and scale it as in \eqref{eq:DefRescaledRBF} with $ \delta_\ell = \nu h_\ell $ for a fixed \emph{overlap} $ \nu > 1 $, giving $ \Phi_\ell := \Phi_{\delta_\ell} $. This setting allows us to define an approximation operator \cite{Wendland2010}.

\begin{theorem}\label{cor:MultilevelOperator}
 If the local approximations $ s_{\ell} \in W_{\ell} := \spn\{ \Phi_{\ell}(\cdot - \xx_{\ell,i}) \; : \; \xx_{\ell,i} \in X_{\ell} \} $ are computed with the interpolation operator of \cref{def:Interpolation}, then for each $ L \in \N $ there is a linear and bounded \emph{multilevel interpolation operator} $ A_L: C(\Omega) \to V_L := W_1 + \cdots + W_L $ given by
 \begin{align*}
  A_L(f) = \sum_{\ell = 1}^{L} s_{\ell} = \sum_{\ell =1}^{L} I_{X_{\ell}, \Phi_{\ell}}(e_{\ell-1}).
 \end{align*}
\end{theorem}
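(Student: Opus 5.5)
The plan is an induction on $L$ that tracks, along with $A_L$, the residual map $f \mapsto e_L$. First I would check that each single-level interpolation operator $I_\ell := I_{X_\ell,\Phi_\ell} : C(\Omega) \to W_\ell$ from \cref{def:Interpolation} is well defined, linear and bounded. For Mat\'ern and Wendland kernels the matrix $M_{X_\ell,\Phi_\ell}$ is symmetric positive definite, hence invertible, so the coefficients are $\cc = M_{X_\ell,\Phi_\ell}^{-1}\ff$ with $\ff = (f(\xx_{\ell,i}))_i$. This depends linearly on $f$, and $|f(\xx_{\ell,i})| \le \|f\|_{L_\infty(\Omega)}$ for every $i$. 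It follows that
\begin{align*}
 \|I_\ell f\|_{L_\infty(\Omega)} \le \|M_{X_\ell,\Phi_\ell}^{-1}\|_{\infty} \, \|\Phi_\ell\|_{L_\infty(\R^n)} \, N_\ell \, \|f\|_{L_\infty(\Omega)} .
\end{align*}
Since $X_\ell$ is finite and $\Phi_\ell$ is continuous and bounded (because $\Phi \in L_1 \cap C$ and $\widehat\Phi \in L_1$ by \eqref{eq:FourierDecay} with $\sigma > n/2$), each $I_\ell$ is a bounded linear map $C(\Omega) \to W_\ell \subseteq C(\Omega)$. Its range consists of continuous functions, so it can be applied again at the next level.

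Next I would use the recursion $e_0 = f$, $e_\ell = e_{\ell-1} - I_\ell e_{\ell-1} = (\mathrm{Id} - I_\ell) e_{\ell-1}$. By induction this gives
\begin{align*}
 e_\ell = (\mathrm{Id} - I_\ell)(\mathrm{Id} - I_{\ell-1}) \cdots (\mathrm{Id} - I_1) f =: E_\ell f .
\end{align*}
Each $E_\ell$ is a composition of bounded linear operators on $C(\Omega)$, so it is bounded and linear, with $\|E_\ell\| \le \prod_{k \le \ell} (1 + \|I_k\|)$. Then $s_\ell = I_\ell E_{\ell-1} f$, and I define
\begin{align*}
 A_L := \sum_{\ell=1}^L I_\ell E_{\ell-1} .
\end{align*}
This is a finite sum of bounded linear operators, so it is bounded and linear. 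Each summand maps into $W_\ell$, hence $A_L(f) \in W_1 + \cdots + W_L = V_L$. This agrees with $f_L = \sum_{\ell} s_\ell$ by construction, and a telescoping check gives $f - A_L f = E_L f$.

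There is no real obstacle here. The only point requiring care is the first step: unique solvability of \eqref{eq:InterpolationProblem}, and the fact that the residual $e_{\ell-1}$ stays in $C(\Omega)$, so that point evaluation, and therefore $I_\ell$, makes sense on it at every level.
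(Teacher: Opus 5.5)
Your proof is correct, but it follows a different route from the paper, which gives no argument for this theorem at all. The paper states it with a reference to \cite{Wendland2010}. Its only in-paper justification of boundedness is the remark after \cref{cor:ConvergenceMultilevel} that the error estimate implies it, i.e.\ $\|A_L f\|_{W^{\tau}_q(\Omega)} \leq \|f\|_{W^{\tau}_q(\Omega)} + C\alpha^L \|f\|_{H^{\sigma}(\Omega)}$. Together with the embedding $H^{\sigma}(\Omega)\hookrightarrow W^{\tau}_q(\Omega)$, this gives boundedness $H^{\sigma}(\Omega)\to W^{\tau}_q(\Omega)$ with a constant independent of $L$.

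Your route is purely algebraic. The factorization $e_\ell = E_\ell f$ with $E_\ell=(\mathrm{Id}-I_\ell)\cdots(\mathrm{Id}-I_1)$ and the telescoping identity $A_L=\sum_{\ell} I_\ell E_{\ell-1}=\mathrm{Id}-E_L$ need none of the hypotheses of \cref{thrm:ConvergenceMultilevel}: no decay of fill distances, no condition on $\nu$ or $h_1$, no Lipschitz boundary. They also give boundedness on $C(\Omega)$ in the sup norm (tacitly $C_b(\Omega)$ or $C(\overline{\Omega})$), which is what the statement literally claims. The error-estimate route does not provide this, since it only controls $f\in H^{\sigma}(\Omega)$.

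The price is that your constant $\prod_{k\le L}(1+\|I_k\|)$ depends on $N_k$ and $\|M_{X_k,\Phi_k}^{-1}\|_\infty$ and is far from uniform in $L$. So it cannot replace the $L$-uniform Sobolev-norm bounds that enter the tensor-product analysis of \cref{thrm:ErrorEstimateNew} through \cref{thrm:TensorProductNormUniform}.

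Your factorization also yields two facts worth stating explicitly:
\begin{enumerate}
\item Since $(\mathrm{Id}-I_L)g$ vanishes on $X_L$, the identity $f-A_Lf=E_Lf$ gives $A_Lf|_{X_L}=f|_{X_L}$. This justifies the name ``interpolation operator''.
\item Each summand $I_\ell E_{\ell-1}f$ depends only on $f|_{X_1\cup\cdots\cup X_\ell}$. This is exactly the observation used in the proof of \cref{thrm:NodalRepresentationMultilevelOperator}.
\end{enumerate}

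Finally, you need not restrict to Mat\'ern and Wendland kernels. Condition \eqref{eq:FourierDecay} with $c_1>0$ gives $\widehat{\Phi}>0$, so every rescaled $\Phi_\ell$ is strictly positive definite. Hence $M_{X_\ell,\Phi_\ell}$ is invertible for pairwise distinct sites under the paper's standing kernel assumption.
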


The convergence of the multilevel method was proven in \cite{Wendland2010} for the $ L_2 $-norm, in \cite{Wendland2017} for $ L_{\infty} $, and in the following general form in \cite{Gollwitzer2026}.

\begin{theorem}\label{thrm:ConvergenceMultilevel}
Let $ \Omega \subseteq \R^n $ be a bounded Lipschitz domain. Let $ X_1, X_2, \dots $ be a sequence of point sets in $ \Omega $ with fill distances satisfying $ c \mu h_{\ell} \leq h_{\ell +1} \leq \mu h_{\ell} $ for $ \ell = 1, 2, \dots $ with fixed $ \mu \in (0,1) $, $ c \in (0,1] $ and $ h_1 $ sufficiently small. Let $ \sigma > n/2 $ and $ \Phi$ be the reproducing kernel of $ H^{\sigma}(\R^n) $, i.e., its Fourier transform satisfies \eqref{eq:FourierDecay}, and let $ \Phi_\ell $ be defined by \eqref{eq:DefRescaledRBF} with $ \delta_{\ell} = \nu h_{\ell} $. Assume $ 1/h_1 \geq \nu \geq \gamma/\mu $ with a fixed $\gamma > 0 $. Let $ q \in [1,\infty]$. Then there exist constants $ C, C_1 > 0 $ such that for all $ f \in H^{\sigma}(\Omega) $, with $\widetilde{\alpha} = C_1 \mu^{\sigma - \tau- n (\frac{1}{2} - \frac{1}{q})_+}$,
\begin{align}\label{eq:ErrorEstimate}
  \| f - A_L(f) \|_{W^{\tau}_q(\Omega)} \leq C \widetilde{\alpha}^{L} \| f \|_{H^{\sigma}(\Omega)},
\end{align}
holds for all $ 0 \leq \tau \leq \widetilde{\sigma} $, where $ \widetilde{\sigma} = \widetilde{\sigma}_0 := \sigma - n(1/2 - 1/q)_+ $ in the case $ \sigma \in \N $ and either $ q > 2 $ and $ \widetilde{\sigma}_0 \in \N $, or $ q = 2 $; otherwise $ \widetilde{\sigma} = \lceil \widetilde{\sigma}_0 \rceil - 1 $. Moreover, $ \tau \in \N_0 $ if $ q = \infty $.
\end{theorem}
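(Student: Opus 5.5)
The estimate is proved by the standard multilevel route: write the level-$L$ error as a product of one-step residual maps, prove a contraction for each step in a level-dependent energy norm, and convert to the $W^{\tau}_q$ norm only at the end.

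\emph{Error recursion.} Since $e_\ell = e_{\ell-1} - I_{X_\ell,\Phi_\ell}(e_{\ell-1})$, we have $f - A_L(f) = e_L = (\mathrm{Id}-I_{X_L,\Phi_L})\cdots(\mathrm{Id}-I_{X_1,\Phi_1})f$. To use Sobolev-space tools on $\Omega$, we fix a bounded extension operator $E: H^\sigma(\Omega)\to H^\sigma(\R^n)$ (this is where the Lipschitz assumption enters). The scaled kernel $\Phi_\ell$ reproduces $H^\sigma(\R^n)$ with the equivalent norm
\[
\|g\|_{\Phi_\ell}^2 = \int_{\R^n} |\widehat g(\xxi)|^2 \widehat{\Phi}(\delta_\ell\xxi)^{-1}\,d\xxi .
\]
By \eqref{eq:FourierDecay}, this norm is equivalent to $\int |\widehat g|^2(1+\delta_\ell^2\|\xxi\|_2^2)^{\sigma}\,d\xxi$, with constants independent of $\ell$.

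\emph{One-step contraction.} The key estimate is
\[
\|E e_\ell\|_{\Phi_{\ell+1}} \le \alpha\, \|E e_{\ell-1}\|_{\Phi_\ell}, \qquad \alpha \approx C\bigl(\mu^{\sigma}+\nu^{-\sigma}\bigr).
\]
To obtain it, first use the minimal-norm property of the interpolant in the native space of $\Phi_\ell$: the extension of $e_\ell$ is controlled by $\|E e_{\ell-1}\|_{\Phi_\ell}$. Next split the $\Phi_{\ell+1}$-norm into low frequencies $\|\xxi\|_2\lesssim 1/\delta_{\ell+1}$ and high frequencies. The high-frequency part is bounded by the ratio $(\delta_{\ell+1}/\delta_\ell)^{2\sigma}\approx\mu^{2\sigma}$. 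The low-frequency part reduces to $\|e_\ell\|_{L_2}$ plus a $\delta_{\ell+1}^{2\sigma}|e_\ell|_{H^\sigma}^2$ term. We bound these using the sampling inequality for functions vanishing on $X_\ell$,
\[
\|u\|_{W^{\tau}_q(\Omega)} \le C h_\ell^{\sigma-\tau-n(1/2-1/q)_+}\|u\|_{H^\sigma(\Omega)},
\]
which turns $h_\ell^{\sigma}/\delta_\ell^{\sigma}$ into $\nu^{-\sigma}$. Iterating the one-step estimate gives $\|E e_L\|_{\Phi_{L+1}} \le \alpha^L\|f\|_{\Phi_1}\lesssim \alpha^L\|f\|_{H^\sigma(\Omega)}$. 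The constant here is uniform because $\delta_1 = \nu h_1 \le 1$.

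\emph{Conversion to the target norm.} Apply the sampling inequality once more, to $e_L$, which vanishes on $X_L$. Because $\|e_L\|_{H^\sigma}\lesssim \delta_L^{-\sigma}\|e_L\|_{\Phi_L}$, the result is
\[
\|e_L\|_{W^{\tau}_q} \lesssim h_L^{\sigma-\tau-n(1/2-1/q)_+}\,\nu^{-\sigma} h_L^{-\sigma}\,\alpha^{L}\|f\|_{H^\sigma}.
\]
The negative power $h_L^{-\tau-n(1/2-1/q)_+}$ grows only like $\mu^{-L(\tau+n(1/2-1/q)_+)}$, using $h_L\ge (c\mu)^{L}h_1$. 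Combined with $\alpha^L$, this produces $\widetilde\alpha^L$ with $\widetilde\alpha = C_1\mu^{\sigma-\tau-n(1/2-1/q)_+}$, once we choose $\nu\ge\gamma/\mu$ so that $\nu^{-\sigma}\lesssim\mu^{\sigma}$. The restrictions on $\tau\le\widetilde\sigma$, including integrality when $q=\infty$ and the ceiling rule, are exactly the hypotheses under which the sampling inequality holds in $W^\tau_q$. I would cite that inequality in this form (Narcowich–Ward–Wendland type) rather than reprove it.

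\emph{Main obstacle.} The hardest step is the one-step contraction. It requires a sampling inequality whose constant is uniform in $\ell$, valid on a Lipschitz domain, and compatible with the scaled kernel norms. Controlling the low-frequency part in the scaled norm requires care with the extension operator: the interpolation error lives only on $\Omega$, yet the kernel norm is global. I would handle this by taking $E$ to be Stein's universal extension, which is bounded simultaneously in every $H^s$ and hence in the scaled norms uniformly. This mirrors the $L_2$ argument of \cite{Wendland2010}.
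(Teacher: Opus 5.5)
Your proposal is correct and is essentially the standard argument behind this theorem. The paper does not prove it but quotes it from \cite{Wendland2010,Wendland2017,Gollwitzer2026}. The argument runs as follows: a one-step contraction $\|Ee_\ell\|_{\Phi_{\ell+1}}\le C(\mu^\sigma+\nu^{-\sigma})\|Ee_{\ell-1}\|_{\Phi_\ell}$ in the scaled native-space norms, obtained with a Stein extension. It is followed by a single sampling inequality on the last level, whose loss $h_L^{-\tau-n(\frac12-\frac1q)_+}$ is absorbed using the lower bound $h_{\ell+1}\ge c\mu h_\ell$.

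The only slip is bookkeeping. Your iteration bounds $\|Ee_L\|_{\Phi_{L+1}}$ by $\alpha^L$. Measured in the $\Phi_L$-norm, however, $e_L$ is only bounded by $\|Ee_{L-1}\|_{\Phi_L}\le\alpha^{L-1}\|Ef\|_{\Phi_1}$. Either using $\delta_{L+1}\ge c\mu\,\delta_L$ in the conversion step or dropping one power of $\alpha$ repairs this, and it changes only the constant $C$.
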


Since $C_1$ is unknown in practice, the following corollary, obtained by accepting a slight loss in convergence order \cite[Section 4.2]{Wendland2017}, is more useful.

\begin{corollary}\label{cor:ConvergenceMultilevel}
 With the notation and assumptions of \cref{thrm:ConvergenceMultilevel}, if there is $ \varepsilon > 0 $ such that $ \sigma - \tau - n(\frac{1}{2} - \frac{1}{q})_+ > \varepsilon $ and $ C_1 \mu^{\varepsilon} \leq 1 $, then
 \begin{align*}
  \| f - A_L(f) \|_{W^{\tau}_q(\Omega)} \leq C \alpha^L \| f \|_{H^{\sigma}(\Omega)},
 \end{align*}
 where $ \alpha = \mu^{\sigma - \tau - \varepsilon - n (\frac{1}{2} - \frac{1}{q})_+} $.
\end{corollary}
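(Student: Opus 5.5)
The plan is to reduce the corollary to \cref{thrm:ConvergenceMultilevel} by absorbing the unknown constant $C_1$ into a small fraction of the exponent. Write
\[
\beta := \sigma - \tau - n\left(\tfrac{1}{2}-\tfrac{1}{q}\right)_+ ,
\]
so that \cref{thrm:ConvergenceMultilevel} gives
\[
\| f - A_L(f)\|_{W^\tau_q(\Omega)} \le C\,\widetilde{\alpha}^L \|f\|_{H^\sigma(\Omega)}, \qquad \widetilde{\alpha} = C_1\mu^{\beta}.
\]
All hypotheses of the theorem are assumed in the corollary, so this bound is available for the given $\tau$ and $q$ and for every $L$.

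The key step is the factorization $\mu^\beta = \mu^{\varepsilon}\mu^{\beta-\varepsilon}$, which is legitimate since $\beta > \varepsilon > 0$ and so both exponents are positive. Then
\[
\widetilde{\alpha} = \bigl(C_1\mu^{\varepsilon}\bigr)\,\mu^{\beta-\varepsilon} \le \mu^{\beta-\varepsilon} = \alpha ,
\]
using the hypothesis $C_1\mu^{\varepsilon}\le 1$. Since $0<\widetilde{\alpha}\le\alpha$, the map $t\mapsto t^L$ is monotone on $[0,\infty)$, and hence $\widetilde{\alpha}^L\le\alpha^L$ for every $L\in\N$. Substituting this into the estimate from the theorem gives the claim with the same constant $C$.

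There is essentially no obstacle here. The only point to check is that the constant $C$ and the admissible range of $\tau$ are unchanged: the argument only replaces the rate $\widetilde{\alpha}$ by a larger number and leaves everything else in the theorem untouched. One may also remark that, because $\mu\in(0,1)$ and $\mu^\varepsilon\to 0$ as $\mu\to 0$, the hypothesis $C_1\mu^\varepsilon\le1$ can always be met by choosing $\mu$ small enough. This is the practical point of the corollary: $\alpha$ is then known explicitly, at the cost of the loss $\varepsilon$ in the convergence order.
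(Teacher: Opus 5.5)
Your argument is correct and is exactly the reasoning behind the paper's one-line justification via \cite[Section 4.2]{Wendland2017}: split $\widetilde{\alpha} = (C_1\mu^{\varepsilon})\,\mu^{\sigma-\tau-\varepsilon-n(\frac{1}{2}-\frac{1}{q})_+} \le \alpha$ using $C_1\mu^{\varepsilon}\le 1$, raise to the power $L$, and keep the constant $C$ and the range of $\tau$ from \cref{thrm:ConvergenceMultilevel}. Your closing remark that $\mu$ can always be taken small enough tacitly uses that $C_1$ does not depend on $\mu$ (which holds in that theory because $\nu$ is coupled to $\mu$ via $\nu \ge \gamma/\mu$), but the corollary itself does not need this.
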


Convergence holds whenever $ \mu $ is chosen such that $ \alpha < 1 $, which we assume throughout. Moreover, the result in \cref{cor:ConvergenceMultilevel} implies that $ A_L $ is bounded. 

\begin{remark}
It is also possible to derive convergence estimates, if the target function $ f $ is in $ H^{\varsigma}(\Omega) $ with $ n/2 < \varsigma < \sigma $, i.e., it is not as smooth as the kernel used in the approximation process. This is commonly known as \emph{escaping the native space} and we refer to \cite[Section 5]{Wendland2010} for details.
\end{remark}

In \cite{Gollwitzer2026}, an alternative nodal representation of $ A_L $ was observed, relying on the \emph{Lagrange functions} $ \chi_{L,i} \in W_L $ defined by $ \chi_{L,i}(\xx_{L,k}) = \delta_{ik} $. We restrict ourselves to nested point sets.

\begin{theorem}\label{thrm:NodalRepresentationMultilevelOperator}
Assume $ X_1 \subseteq X_2 \subseteq \cdots \subseteq X_L $. Then the multilevel operator $ A_L $ admits the representation
 \begin{align}\label{eq:NodalRepresentationMultilevelOperator}
  A_L(f) = \sum_{i = 1}^{N_L} f(\xx_{L,i}) b_{L,i}, \quad f \in C(\Omega),
 \end{align}
 where $ b_{L,i} = A_L(\chi_{L,i}) $, $ 1 \leq i \leq N_L $.
\end{theorem}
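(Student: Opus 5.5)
The plan is to show that $A_L(f)$ depends only on the values $f|_{X_L}$, and then to use linearity of $A_L$ from \cref{cor:MultilevelOperator}. If $f$ and $g$ agree on $X_L$, I expect $A_L(f)=A_L(g)$. Granting this, take
\begin{align*}
 g := \sum_{i=1}^{N_L} f(\xx_{L,i})\,\chi_{L,i} \in W_L .
\end{align*}
By the Lagrange property, $g(\xx_{L,k}) = f(\xx_{L,k})$ for all $k$. Linearity then gives
\begin{align*}
 A_L(f) = A_L(g) = \sum_{i=1}^{N_L} f(\xx_{L,i})\,A_L(\chi_{L,i}) = \sum_{i=1}^{N_L} f(\xx_{L,i})\, b_{L,i},
\end{align*}
which is \eqref{eq:NodalRepresentationMultilevelOperator}. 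Existence and uniqueness of $\chi_{L,i}$ follow from the unique solvability of \eqref{eq:InterpolationProblem} on $X_L$ with kernel $\Phi_L$.

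The key step is the dependence claim. I will prove it by induction on $\ell = 1,\dots,L$, with the following statement: if $f|_{X_L}=0$, then $e_{\ell-1}|_{X_\ell}=0$ and $s_\ell = 0$. By linearity it suffices to treat $f|_{X_L}=0$ and show $A_L(f)=0$.

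For $\ell=1$, nestedness gives $X_1\subseteq X_L$, so $e_0=f$ vanishes on $X_1$. The interpolant $s_1 = I_{X_1,\Phi_1}(e_0)$ has zero data, so its coefficient vector solves $M_{X_1,\Phi_1}\cc=\00$, hence $\cc=\00$ and $s_1=0$. Consequently $e_1 = e_0 - s_1 = f$.

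Inductively, if $s_1=\dots=s_{\ell-1}=0$, then $e_{\ell-1}=f$. Since $X_\ell\subseteq X_L$, $e_{\ell-1}$ vanishes on $X_\ell$, and unique solvability again gives $s_\ell=0$. Summing over levels, $A_L(f)=\sum_\ell s_\ell = 0$.

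The only delicate point is nestedness: it is exactly what guarantees that each residual is evaluated only at points of $X_L$, where $f$ is known. Without $X_\ell\subseteq X_L$, the residual $e_{\ell-1}$ would be sampled at points outside $X_L$, and the claim would fail. Once the induction is in place, the rest is bookkeeping.
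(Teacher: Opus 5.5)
Your proposal is correct and follows the paper's proof: you identify $g=\sum_{i} f(\xx_{L,i})\chi_{L,i}$ with $I_{X_L,\Phi_L}(f)$, use $A_L(f)=A_L(I_{X_L,\Phi_L}(f))$, and conclude by linearity. Your induction showing that $A_L(f)=0$ whenever $f|_{X_L}=0$ spells out what the paper asserts in one line, namely that $A_L$ only uses the data $f|_{X_L}$, which is exactly where nestedness enters.
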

\begin{proof}
Since $ A_L $ only uses the data $ f|_{X_L} $, we have $ A_L(f) = A_L(I_{X_L,\Phi_L}(f)) $. Inserting $ I_{X_L,\Phi_L}(f) = \sum_{i=1}^{N_L} f(\xx_{L,i}) \chi_{L,i} $ and using linearity of $ A_L $ gives the result.
\end{proof}

The representation in \cref{thrm:NodalRepresentationMultilevelOperator} is remarkable since the dependence on the data $ f(\xx_{L,i}) $ is explicit.

Furthermore, we need the following properties of the image space of $ A_L$, also taken from \cite{Gollwitzer2026}.

\begin{corollary}\label{cor:PropertiesImageSpace}
 Assume $ X_1 \subseteq X_2 \subseteq \cdots \subseteq X_L $. Then
\begin{enumerate}
 \item $ \operatorname{dim}(A_L(C(\Omega))) = N_L $;
 \item $ A_1(C(\Omega)) \subseteq A_2(C(\Omega)) \subseteq \cdots \subseteq A_L(C(\Omega)) $.
\end{enumerate}
\end{corollary}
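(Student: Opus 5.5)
We must prove two facts: $\dim A_L(C(\Omega)) = N_L$, and the image spaces are nested. Throughout we use the nodal representation of \cref{thrm:NodalRepresentationMultilevelOperator}.

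\emph{Step 1: the image is spanned by $b_{L,1},\dots,b_{L,N_L}$.} By \eqref{eq:NodalRepresentationMultilevelOperator}, every $A_L(f)$ lies in $\spn\{b_{L,i}\}$. Conversely, $b_{L,i}=A_L(\chi_{L,i})$ with $\chi_{L,i}\in C(\Omega)$, so each $b_{L,i}$ lies in the image. Hence $A_L(C(\Omega))=\spn\{b_{L,1},\dots,b_{L,N_L}\}$, and the dimension is at most $N_L$.

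\emph{Step 2: the $b_{L,i}$ are linearly independent.} This is the main point. We show that $A_L$ itself interpolates on $X_L$, that is, $A_L(f)|_{X_L}=f|_{X_L}$. The key identity, by induction on the level, is
\[
e_\ell = e_{\ell-1}-I_{X_\ell,\Phi_\ell}(e_{\ell-1}), \qquad e_\ell|_{X_\ell}=0 .
\]
Nestedness $X_k\subseteq X_\ell$ for $k\le\ell$ gives $e_\ell|_{X_k}=0$ for all $k\le \ell$. In particular $e_L|_{X_L}=0$. Since $A_L(f)=f-e_L$ by telescoping, we get $A_L(f)(\xx_{L,k})=f(\xx_{L,k})$.

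Applying this to $f=\chi_{L,i}$ gives $b_{L,i}(\xx_{L,k})=\delta_{ik}$. Now suppose $\sum_i c_i b_{L,i}=0$. Evaluating at $\xx_{L,k}$ gives $c_k=0$. So the $b_{L,i}$ are independent and the dimension equals $N_L$, which proves (1). The obstacle here is only noticing that interpolation on $X_L$ at the last level already yields exactness at the nodes. Nestedness is not even needed for that part; it enters through the representation itself.

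\emph{Step 3: nestedness of the images.} Let $g=A_\ell(f)$ with $\ell<L$. We want some $\tilde f$ with $A_L(\tilde f)=g$. The natural candidate is $\tilde f=g$, and it suffices to show $A_L(g)=g$ whenever $g\in A_\ell(C(\Omega))$.

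Run the multilevel scheme on $g$. By Step 2, $g$ agrees with $f$ on $X_\ell\supseteq X_1,\dots,X_\ell$. Since $A_\ell$ only uses the data on $X_\ell$, the first $\ell$ local interpolants computed from $g$ coincide with those computed from $f$. This is seen inductively: for $k\le \ell$, the residual $e_{k-1}$ restricted to $X_k$ depends only on the data on $X_\ell$, because each $s_j$ is determined by values on $X_j\subseteq X_\ell$.

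Hence, for input $g$, after level $\ell$ the residual is $g-A_\ell(g)=g-A_\ell(f)=0$. All subsequent local interpolants of the zero residual vanish, so $A_L(g)=A_\ell(g)=g$. Thus $g\in A_L(C(\Omega))$, and iterating over $\ell$ gives the chain in (2).

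This step requires some care with the claim that $A_\ell(f)$ depends only on $f|_{X_\ell}$. It follows from the same induction using nestedness, and it was already used implicitly in the proof of \cref{thrm:NodalRepresentationMultilevelOperator}.
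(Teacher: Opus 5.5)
Your proof is correct and complete. The paper gives no proof of this corollary; it imports the result from \cite{Gollwitzer2026}, so there is no in-paper argument to compare against.

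Your route is the natural one suggested by the corollary's placement directly after \cref{thrm:NodalRepresentationMultilevelOperator}. It uses only the two facts already invoked in that theorem's proof. First, the last level interpolates on $X_L$, so $b_{L,i}(\xx_{L,k})=\delta_{ik}$ and the $b_{L,i}$ form a Lagrange-type basis of the image. Second, under nestedness, $A_\ell(f)$ depends only on $f|_{X_\ell}$.

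Your Step 3 amounts to the identity $A_L\circ A_\ell=A_\ell$ for $\ell\le L$, so in particular $A_L$ is idempotent. Combined with the nodal representation, this also gives the explicit refinement relation $b_{\ell,i}=\sum_{k=1}^{N_L} b_{\ell,i}(\xx_{L,k})\,b_{L,k}$.

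One cosmetic point: $e_\ell|_{X_\ell}=0$ needs no induction. It is just the interpolation condition at level $\ell$.
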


\subsection{Tensor Products of Spaces and Operators}
We give a brief introduction to tensor products of function spaces and operators; for details and proofs we refer to, e.g., \cite{Aubin2011,Hackbusch2012,Light2006}.

\begin{definition}
 For $ 1 \leq j \leq d $ let $ U^{(j)} $ be linear spaces of functions $ u^{(j)}: \Omega^{(j)} \to \R $, where $ \Omega^{(j)} \subseteq \R^{n_j} $. The \emph{elementary tensor} $ \uu := u^{(1)} \otimes \cdots \otimes u^{(d)} $ is defined by
 \begin{align*}
  \uu(\xx) := \prod_{j=1}^{d} u^{(j)}(\xx^{(j)}), \quad \xx = ( \xx^{(1)}, \dots, \xx^{(d)}) \in \OOmega := \Omega^{(1)} \times \cdots \times \Omega^{(d)},
 \end{align*}
 and the \emph{algebraic tensor product space} is $ U^{(1)} \otimes \cdots \otimes U^{(d)} := \spn\{ u^{(1)} \otimes \cdots \otimes u^{(d)} \; : \; u^{(j)} \in U^{(j)}, \ 1 \leq j \leq d \} $.
\end{definition}

We are also interested in the tensor product of operators. 

\begin{definition}
 Let $ U^{(j)}, V^{(j)} $, $ 1 \leq j \leq d $, be linear spaces with algebraic tensor products $ S = U^{(1)} \otimes \cdots \otimes U^{(d)} $ and $ T = V^{(1)} \otimes \cdots \otimes V^{(d)} $, and let $ A^{(j)}: U^{(j)} \to V^{(j)} $ be linear operators. The \emph{tensor product operator} $ \cA := A^{(1)} \otimes \cdots \otimes A^{(d)} : S \to T $ is defined by
 \begin{align*}
  \cA\!\left(\sum_{i=1}^{m} u^{(1)}_i \otimes \cdots \otimes u^{(d)}_i\right) := \sum_{i=1}^{m} A^{(1)}u^{(1)}_i \otimes \cdots \otimes A^{(d)} u^{(d)}_i, \quad m \in \N.
 \end{align*}
\end{definition}

For the error analysis later, we need tensor products of $ L_p $-spaces, Sobolev spaces $W^{\tau}_p $ and spaces of continuously differentiable functions. Since the tensor product is commutative and associative up to isomorphism \cite{Hackbusch2012}, we restrict ourselves to $ d = 2 $ for the remainder of this subsection.

\begin{definition}\label{def:Wmix}
Let $ \Omega^{(1)} \subseteq \R^{n_1} $, $ \Omega^{(2)} \subseteq \R^{n_2} $ be measurable, and $ \OOmega = \Omega^{(1)} \times \Omega^{(2)} $. For $ \ttau \in \N_0^2 $ and $ 1 \leq p < \infty $, the \emph{Sobolev space of mixed regularity} is
 \begin{align*}
  W^{\ttau,p}_{mix}(\OOmega) := \{ \uu \in L_p(\OOmega) \; : \; D^{\aalpha_1,\aalpha_2} \uu \in L_p (\OOmega), \ |\aalpha_j | \leq \tau_j, \ j = 1,2 \},
 \end{align*}
with norm $ \| \uu \|_{W^{\ttau,p}_{mix}} := \left( \sum_{| \aalpha_1| \leq \tau_1} \sum_{| \aalpha_2 | \leq \tau_2} \| D^{\aalpha_1,\aalpha_2} \uu \|^p_{L_p(\OOmega)} \right)^{1/p} $.
\end{definition}

This space extends to non-integer smoothness \cite{Hansen2010,Sickel2009,Sickel2011} and can be identified as a tensor product space.

\begin{theorem}\label{thrm:TensorProductSobolevSpaces}
 In the setting of \cref{def:Wmix}, for every $ 1 \leq p < \infty $, there is a norm such that the completion of $ W^{\tau_1,p}(\Omega^{(1)}) \otimes W^{\tau_2,p}(\Omega^{(2)}) $ with respect to this norm is isomorphic to $ W^{\ttau,p}_{mix}(\OOmega) $.
\end{theorem}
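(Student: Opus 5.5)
The plan is to build the norm explicitly as a \emph{tensor norm} on the algebraic tensor product $S := W^{\tau_1,p}(\Omega^{(1)}) \otimes W^{\tau_2,p}(\Omega^{(2)})$, namely the restriction of the $W^{\ttau,p}_{mix}(\OOmega)$-norm to $S$. We then show that the completion of $S$ under this norm is isometrically isomorphic to $W^{\ttau,p}_{mix}(\OOmega)$. This reduces to two claims: (i) $S$ embeds into $W^{\ttau,p}_{mix}(\OOmega)$, so the norm is well defined and genuinely a norm on $S$; and (ii) the image of $S$ is dense in $W^{\ttau,p}_{mix}(\OOmega)$. Given (i), (ii) and completeness of $W^{\ttau,p}_{mix}(\OOmega)$, the completion of $(S,\|\cdot\|_{W^{\ttau,p}_{mix}})$ is identified with the closure of the image of $S$, which is the whole space.

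For (i) we proceed in three steps.
\begin{enumerate}
 \item For an elementary tensor $u^{(1)}\otimes u^{(2)}$, Fubini gives $D^{\aalpha_1,\aalpha_2}(u^{(1)}\otimes u^{(2)}) = D^{\aalpha_1}u^{(1)} \otimes D^{\aalpha_2}u^{(2)}$ in the weak sense. To check this, test against products $\varphi_1\otimes\varphi_2$ of test functions; these span a dense subset of $C_c^\infty(\OOmega)$ in the relevant sense, or one can argue directly by integrating in each variable separately.
 \item Applying Fubini again yields
 \begin{align*}
  \|D^{\aalpha_1}u^{(1)} \otimes D^{\aalpha_2}u^{(2)}\|_{L_p(\OOmega)} = \|D^{\aalpha_1}u^{(1)}\|_{L_p(\Omega^{(1)})}\,\|D^{\aalpha_2}u^{(2)}\|_{L_p(\Omega^{(2)})}.
 \end{align*}
 Summing over multi-indices gives $\|u^{(1)}\otimes u^{(2)}\|_{W^{\ttau,p}_{mix}} = \|u^{(1)}\|_{W^{\tau_1,p}}\|u^{(2)}\|_{W^{\tau_2,p}}$, so this is a crossnorm.
 \item Linearity extends the embedding to finite sums. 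Injectivity of the map from the algebraic tensor product into functions on $\OOmega$ is standard: a finite sum of products vanishing a.e. with linearly independent first factors forces each second factor to vanish.
\end{enumerate}

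The main obstacle is the density claim (ii) on a general measurable, or Lipschitz, product domain. First we treat $\OOmega = \R^{n_1}\times\R^{n_2}$. There we mollify with a product mollifier $\eta_\varepsilon^{(1)}\otimes\eta_\varepsilon^{(2)}$ and cut off with a product cutoff. This approximates $\uu$ in $W^{\ttau,p}_{mix}$ by smooth compactly supported functions, since mixed derivatives commute with convolution. Next we approximate such a $C_c^\infty$ function on a box by finite sums of products in the $C^{k}$ topology, with $k=\tau_1+\tau_2$. For this we use Stone--Weierstrass for derivatives, for instance tensor polynomial approximation of the function together with all its derivatives up to order $k$ times a product cutoff, or a Fourier series on a larger torus. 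Uniform convergence on a bounded support implies $L_p$ convergence of all mixed derivatives. For general $\Omega^{(j)}$ we would use extension operators $E_j$ for $W^{\tau_j,p}(\Omega^{(j)})$, which exist for Lipschitz domains by Stein's extension. The tensor product $E_1\otimes E_2$ is bounded on $W^{\ttau,p}_{mix}$ because $E_1$ acts in the first variable only and commutes with derivatives in the second. With this we approximate on $\R^{n_1+n_2}$ and restrict. For merely measurable domains, where no extension is available, I would instead cite the literature \cite{Hansen2010,Sickel2009} as the paper implicitly does. Finally, $p<\infty$ is essential, since density fails in the $L_\infty$ case.
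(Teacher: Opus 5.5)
Your reduction is sound. With the restricted $W^{\ttau,p}_{mix}$-norm, the crossnorm identity and injectivity make it a norm on $S$, and the theorem becomes a pure density statement. Your argument on $\R^{n_1}\times\R^{n_2}$ (product mollifier, cutoff, Fourier modes on a larger torus) is also fine.

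The gap is the passage to domains. By the paper's definition, $E_1\otimes E_2$ is only defined on the algebraic tensor product $S$. Extending it by continuity to all of $W^{\ttau,p}_{mix}(\OOmega)$ presupposes the density you are trying to prove. So ``$E_1$ acts in the first variable only'' has to become a slice-wise definition, and that is where the real content sits. You need three things:
\begin{enumerate}
\item For a.e.\ $\yy$, the slices $\uu(\cdot,\yy)$ and $(D^{\00,\aalpha_2}\uu)(\cdot,\yy)$ lie in $W^{\tau_1,p}(\Omega^{(1)})$, with $\xx$-derivatives given by the slices of $D^{\aalpha_1,\00}\uu$ and $D^{\aalpha_1,\aalpha_2}\uu$. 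To prove it, test with $\varphi(\xx)\psi(\yy)$, vary $\psi$, and use a countable family of $\varphi$ so that a single exceptional null set suffices.
\item The map $\yy\mapsto\uu(\cdot,\yy)$ is strongly measurable (Pettis, using separability for $p<\infty$).
\item This map has Bochner weak derivatives $(D^{\00,\aalpha_2}\uu)(\cdot,\yy)$. These then commute with $E_1$, because bounded operators commute with Bochner integrals.
\end{enumerate}
Together these give the isometric identification $W^{\ttau,p}_{mix}(\Omega^{(1)}\times\Omega^{(2)})\cong W^{\tau_2,p}(\Omega^{(2)};W^{\tau_1,p}(\Omega^{(1)}))$. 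With it, apply $E_1$ pointwise in $\yy$, landing in $W^{\ttau,p}_{mix}(\R^{n_1}\times\Omega^{(2)})$, and then $E_2$ in the same way.

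The paper gives no argument of its own and defers to the tensor-product literature, whose norms are of this Bochner type. For $\ttau=\00$, the norm it names is that of $L_p(\Omega^{(2)};L_p(\Omega^{(1)}))$. So the vector-valued identification you are missing is the natural bridge to those results.

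The same identification also reaches beyond extension domains when $p=2$. For any open $\Omega^{(1)}$, $H^{\tau_1}(\Omega^{(1)})$ is a separable Hilbert space. Composing the slice map with orthogonal projections onto spans of an orthonormal basis yields elements of $S$ that converge to $\uu$ by dominated convergence, with no extension operator needed. The same works whenever one factor has the bounded approximation property.

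Your argument does not reach general $p$ on arbitrary open sets. For non-open measurable sets the statement is not meaningful, since weak derivatives are undefined there. This is harmless for the paper's purposes, which only use the result on the Lipschitz domains required by \cref{thrm:ConvergenceMultilevel}.
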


For $ \ttau = \00 $, the norm in \cref{thrm:TensorProductSobolevSpaces} is the \emph{p-nuclear norm} $ \alpha_p $, if $ 1 < p < \infty $, and the \emph{injective norm} $ \gamma $, if $ p=1 $, cf. \cite{Light2006}. If $ p=2 $, we adopt the notation $ W^{\ttau,2}_{mix}(\OOmega) =: H^{\ttau}_{mix}(\OOmega) $, which can be shown to be a Hilbert space. 

Similar to \cref{def:Wmix} we define the space of continuously differentiable functions of mixed regularity.

\begin{definition}\label{def:Cmix}
In the setting of \cref{def:Wmix}, for $ \ttau \in \N_0^2 $, the space of \emph{continuously differentiable and bounded functions of mixed regularity} $ C^{\ttau}_{b,mix}(\OOmega)$
\begin{align*}
 C^{\ttau}_{b,mix}(\OOmega) := \{ \uu \in C(\OOmega) \; : \; D^{\aalpha_1,\aalpha_2} \uu \in C(\OOmega), \ |\aalpha_j | \leq \tau_j, \ j = 1,2 \},
\end{align*}
with norm $ \| \uu \|_{C^{\ttau}_{b,mix}} := \max_{| \aalpha_1 | \leq \tau_1} \max_{| \aalpha_2 | \leq \tau_2} \| D^{\aalpha_1,\aalpha_2} \uu \|_{L_{\infty}(\OOmega)} $.
\end{definition}

Again, we can interpret $ C^{\ttau}_{b,mix}(\OOmega) $ as a tensor product space \cite{Hackbusch2012}. 

\begin{theorem}\label{thrm:TensorProductContinuousSpaces}
 In the setting of \cref{def:Cmix} there is a norm such that the completion of the tensor product space $ C^{\tau_1}_{b}(\Omega^{(1)}) \otimes C^{\tau_2}_b(\Omega^{(2)}) $ with respect to this norm is isomorphic to $ C^{\ttau}_{b,mix}(\OOmega) $.
\end{theorem}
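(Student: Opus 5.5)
We need to prove that the completion of $C^{\tau_1}_b(\Omega^{(1)}) \otimes C^{\tau_2}_b(\Omega^{(2)})$ with respect to a suitable norm is isomorphic to $C^{\ttau}_{b,mix}(\OOmega)$. The plan has two halves: choose a cross norm on the algebraic tensor product and show it coincides with the $C^{\ttau}_{b,mix}$-norm, then show the algebraic tensor product is dense in $C^{\ttau}_{b,mix}(\OOmega)$, so its completion can be identified with that space.

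\textbf{Step 1: the injective norm and the embedding.} Write $E_j := C^{\tau_j}_b(\Omega^{(j)})$. I would realise each $E_j$ isometrically as a closed subspace of a space of continuous functions, via
\begin{align*}
 u \mapsto (D^{\aalpha} u)_{|\aalpha| \leq \tau_j} \in C_b(\Omega^{(j)} \times \cJ_j), \quad \cJ_j := \{\aalpha : |\aalpha| \leq \tau_j\},
\end{align*}
where $\cJ_j$ carries the discrete topology. Since the norm of $E_j$ is a maximum over derivatives, this map is an isometry.

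We then equip $E_1 \otimes E_2$ with the injective norm $\gamma$. The injective norm respects subspaces, and for spaces of bounded continuous functions one has the standard identification
\begin{align*}
 C_b(Y_1) \otimes_\gamma C_b(Y_2) \hookrightarrow C_b(Y_1 \times Y_2)
\end{align*}
isometrically, with the sup norm. Applying this with $Y_j = \Omega^{(j)} \times \cJ_j$, the element $\sum_i u_i \otimes v_i$ is sent to the function
\begin{align*}
 (\xx, \aalpha_1, \yy, \aalpha_2) \mapsto \sum_i D^{\aalpha_1} u_i(\xx) \, D^{\aalpha_2} v_i(\yy) = D^{\aalpha_1,\aalpha_2}\Big(\sum_i u_i \otimes v_i\Big)(\xx,\yy).
\end{align*}
Its sup norm is exactly $\|\cdot\|_{C^{\ttau}_{b,mix}}$. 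Hence $\gamma$ restricted to $E_1 \otimes E_2$ equals the mixed norm, and the algebraic tensor product sits isometrically inside $C^{\ttau}_{b,mix}(\OOmega)$. Because $C^{\ttau}_{b,mix}(\OOmega)$ is complete (uniform limits of derivatives are derivatives of the limit), the completion embeds isometrically as a closed subspace.

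\textbf{Step 2: density, the main obstacle.} It remains to show that $E_1 \otimes E_2$ is dense in $C^{\ttau}_{b,mix}(\OOmega)$, and this is not automatic. For the uniform norm on noncompact sets, density fails in general, since uniform continuity is needed. I would therefore follow \cite{Hackbusch2012}, where $C_b$ is understood so that the identification holds, e.g. $\Omega^{(j)}$ bounded with closure compact and functions extending continuously with their derivatives to the closure.

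Under that assumption I would establish density in two stages:
\begin{enumerate}
 \item Mollify and extend to obtain uniform approximation of all mixed derivatives by smooth functions on a compact box containing $\overline{\OOmega}$.
 \item Approximate these smooth functions by tensor polynomials, e.g. Bernstein or tensor Taylor/Stone--Weierstrass approximants applied to the highest mixed derivative and then integrated, which approximate all mixed derivatives up to order $\ttau$ simultaneously.
\end{enumerate}
Tensor polynomials lie in $E_1 \otimes E_2$, which gives density.

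Combining Step 1 and Step 2, the completion of $E_1 \otimes E_2$ with respect to $\gamma$ is isometrically isomorphic to $C^{\ttau}_{b,mix}(\OOmega)$, which is the statement of the theorem.
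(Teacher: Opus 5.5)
The paper gives no argument of its own here; it only cites \cite{Hackbusch2012}. Your route is the standard argument behind such identifications, and it is sound as far as it goes. You embed $C^{\tau_j}_b(\Omega^{(j)})$ isometrically into $C_b(\Omega^{(j)}\times\cJ_j)$ and use two facts: the injective norm respects subspaces, and it coincides with the sup-norm on $C_b(Y_1)\otimes C_b(Y_2)$. This correctly identifies the completion with a closed subspace of $C^{\ttau}_{b,mix}(\OOmega)$, so everything reduces to density.

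Compared with the bare citation, this buys two things.
\begin{enumerate}
\item It pins the norm down as the injective (least) cross norm, which \cite{Light2006} denote by $\lambda$. There $\gamma$ is the projective norm, so your symbol is nonstandard, and the word ``projective'' in the paper's remark after the statement is a slip. The uniformity of the injective norm is what the later operator-norm identity relies on.
\item Your caveat in Step~2 is necessary, not merely cautious: with merely bounded continuous derivatives the statement is false. Take $\Omega^{(1)}=\Omega^{(2)}=(0,1)$, $\ttau=\00$, and continuous bumps $\phi_n$ with disjoint supports near $1/n$ and $\phi_n(1/n)=1$. Then $f=\sum_n\phi_n\otimes\phi_n\in C_b((0,1)^2)$. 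Suppose $\|f-\sum_{i=1}^r u_i\otimes v_i\|_\infty\le 1/3$, and set $a_n=(u_i(1/n))_i$, $b_n=(v_i(1/n))_i\in\R^r$. These are bounded and satisfy $\langle a_n,b_n\rangle\ge 2/3$ and $|\langle a_n,b_m\rangle|\le 1/3$ for $n\neq m$. That is impossible along a subsequence on which both converge. So the sup-closure of the algebraic tensor product is a proper subspace, and no norm compatible with the natural identification can work.
\end{enumerate}

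The step that is still not justified as written is ``mollify and extend''. It has two problems.
\begin{enumerate}
\item Extension needs a hypothesis on $\Omega^{(j)}$ that you do not state. Examples are bounded Lipschitz domains, as assumed elsewhere in the paper, or any domain admitting a bounded linear extension operator $E^{(j)}:C^{\tau_j}(\overline{\Omega^{(j)}})\to C^{\tau_j}_c(\R^{n_j})$.
\item The extension must be applied one factor at a time. An isotropic $C^k$-extension on $\OOmega$ destroys the mixed smoothness: for $\ttau=(1,1)$ and $n_1=n_2=1$, a $C^1$-extension loses $\partial_1\partial_2$.
\end{enumerate}
Done factor-wise, it works. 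With uniformly continuous derivatives, $\xx^{(2)}\mapsto\uu(\cdot,\xx^{(2)})$ is a $C^{\tau_2}$-map into $C^{\tau_1}(\overline{\Omega^{(1)}})$. Hence $E^{(1)}$, acting in the first block, commutes with $D^{\aalpha_2}$. Then apply $E^{(2)}$, and only afterwards mollify with a tensor-product mollifier.

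The result is $C^\infty$ on a box. Any $C^{\tau_1+\tau_2}$-density of polynomials, for instance via multivariate Bernstein polynomials, then finishes the proof. Your alternative recipe of integrating the top derivative works literally only when $n_j=1$.
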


For $ \ttau = \00 $, the norm in \cref{thrm:TensorProductContinuousSpaces} is called \emph{projective norm} $ \lambda $, cf. \cite{Light2006}.

The following norm equality, see \cite{Hansen2010}, is essential for the error estimates in \Cref{subsec:ErrorEstimates}.

\begin{theorem}\label{thrm:TensorProductNormUniform}
For $ j = 1,2 $ let $ U^{(j)} $, $ V^{(j)} $ be both either $ W^{\tau,p} $- or $ C^{\tau}_b $-spaces and $ A^{(j)}: U^{(j)} \to V^{(j)} $ be linear and bounded operators. Then
 \begin{align*}
  \| A^{(1)} \otimes A^{(2)} \|_{U^{(1)} \otimes U^{(2)} \to V^{(1)} \otimes V^{(2)}} = \| A^{(1)} \|_{U^{(1)} \to V^{(1)}} \cdot \| A^{(2)} \|_{U^{(2)} \to V^{(2)}}.
 \end{align*}
\end{theorem}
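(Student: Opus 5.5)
The plan is to prove the two inequalities $\geq$ and $\leq$ separately. Throughout I equip $U^{(1)}\otimes U^{(2)}$ and $V^{(1)}\otimes V^{(2)}$ with the mixed norms of \cref{def:Wmix} and \cref{def:Cmix}, which by \cref{thrm:TensorProductSobolevSpaces} and \cref{thrm:TensorProductContinuousSpaces} realise the completed tensor product spaces. I work first on the algebraic tensor product, where $A^{(1)}\otimes A^{(2)}$ is given explicitly, and extend by density and continuity at the end.

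\emph{Lower bound.} First I check that both mixed norms are cross norms, i.e.\ $\|u^{(1)}\otimes u^{(2)}\| = \|u^{(1)}\|\,\|u^{(2)}\|$.

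In the Sobolev case, $D^{\aalpha_1,\aalpha_2}(u^{(1)}\otimes u^{(2)}) = D^{\aalpha_1}u^{(1)}\otimes D^{\aalpha_2}u^{(2)}$. By Fubini,
\begin{align*}
\big\|D^{\aalpha_1}u^{(1)}\otimes D^{\aalpha_2}u^{(2)}\big\|_{L_p}^p = \|D^{\aalpha_1}u^{(1)}\|_{L_p}^p\,\|D^{\aalpha_2}u^{(2)}\|_{L_p}^p .
\end{align*}
The double sum over $|\aalpha_1|\le\tau_1$, $|\aalpha_2|\le\tau_2$ then factorises into the product of the two $p$-th powers of the norms.

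In the $C_b$ case, a maximum of products of nonnegative quantities over a product index set equals the product of the maxima.

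Applying the cross norm property to $A^{(1)}u^{(1)}\otimes A^{(2)}u^{(2)}$ gives
\begin{align*}
\|(A^{(1)}\otimes A^{(2)})(u^{(1)}\otimes u^{(2)})\| = \|A^{(1)}u^{(1)}\|\,\|A^{(2)}u^{(2)}\| .
\end{align*}
Taking the supremum over unit vectors $u^{(1)}$ and $u^{(2)}$ yields $\|A^{(1)}\otimes A^{(2)}\| \ge \|A^{(1)}\|\,\|A^{(2)}\|$.

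\emph{Upper bound.} I factor $A^{(1)}\otimes A^{(2)} = (A^{(1)}\otimes I)(I\otimes A^{(2)})$, where the identities act on $V^{(2)}$ and $U^{(1)}$ respectively. It then suffices to show $\|A^{(1)}\otimes I\|\le \|A^{(1)}\|$; the other factor is symmetric.

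Take a finite sum $\uu=\sum_i u^{(1)}_i\otimes u^{(2)}_i$. For fixed $\xx^{(2)}$ and fixed $\aalpha_2$, the section $\xx^{(1)}\mapsto D^{0,\aalpha_2}\uu(\xx^{(1)},\xx^{(2)}) = \sum_i D^{\aalpha_2}u^{(2)}_i(\xx^{(2)})\,u^{(1)}_i$ lies in $U^{(1)}$. Moreover
\begin{align*}
D^{0,\aalpha_2}(A^{(1)}\otimes I)\uu(\cdot,\xx^{(2)}) = A^{(1)}\big(D^{0,\aalpha_2}\uu(\cdot,\xx^{(2)})\big),
\end{align*}
which follows from linearity of $A^{(1)}$ since the coefficients $D^{\aalpha_2}u^{(2)}_i(\xx^{(2)})$ are scalars.

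The key observation is that the mixed norm is an iterated norm. In the Sobolev case,
\begin{align*}
\|\uu\|^p = \sum_{|\aalpha_2|\le\tau_2}\int_{\Omega^{(2)}} \big\|D^{0,\aalpha_2}\uu(\cdot,\xx^{(2)})\big\|_{U^{(1)}}^p\,d\xx^{(2)}
\end{align*}
by Fubini. In the $C_b$ case the same holds with the sum and integral replaced by $\max_{\aalpha_2}\sup_{\xx^{(2)}}$. The identical formula holds for $V^{(1)}\otimes V^{(2)}$, since the $V$-spaces are of the same type.

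Inserting the bound $\|A^{(1)}g\|_{V^{(1)}}\le\|A^{(1)}\|\,\|g\|_{U^{(1)}}$ pointwise in $\xx^{(2)}$ and for each $\aalpha_2$ gives $\|(A^{(1)}\otimes I)\uu\|\le\|A^{(1)}\|\,\|\uu\|$. Since the algebraic tensor product is dense in the completed space, the inequality extends to the whole space.

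\emph{Main obstacle.} The delicate step is the iterated-norm identity and its use for general, non-elementary elements. I avoid measurability issues by proving everything on finite sums of elementary tensors, where all sections are explicit and smooth, and only then passing to the completion. A second point to watch is that the Sobolev case uses the same exponent $p$ in both factors and in the target spaces; this is what makes the Fubini splitting of the $p$-th power of the norm valid.
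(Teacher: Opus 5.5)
The paper gives no proof of this statement; it only cites \cite{Hansen2010}. Your argument is therefore a genuinely different, self-contained route. It is correct in the setting you fix: integer smoothness, tensor norms equal to the mixed norms of \cref{def:Wmix} and \cref{def:Cmix}, and all four spaces of the same kind with one common exponent $p$. The lower bound needs only that source and target norms are crossnorms. The upper bound reduces, through $A^{(1)}\otimes A^{(2)} = (A^{(1)}\otimes I)(I\otimes A^{(2)})$, to the fact that with a common exponent Fubini writes the mixed norm as an iterated norm with either factor outside. One small correction: the coefficients $D^{\aalpha_2}u^{(2)}_i(\xx^{(2)})$ are not smooth, only measurable and defined almost everywhere. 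That is all Tonelli requires, so nothing breaks.

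What the citation is meant to supply, and your proof does not, is the generality in which the paper later uses the result. In \cref{cor:TensorProductEmbeddingOperator} and in the proof of \cref{thrm:ErrorEstimateNew} it takes $U^{(j)} = H^{\sigma_j}$, with $\sigma_j$ possibly non-integer, and $V^{(j)} = W^{\tau_j,q}$ or $C^{\tau_j}_b$. There your intermediate space $U^{(1)}\otimes V^{(2)}$ has to carry two iterated norms: the one with $U^{(1)}$ outside, as target of $I\otimes A^{(2)}$, and the one with $V^{(2)}$ outside, as source of $A^{(1)}\otimes I$. Chaining the two estimates requires the second to be dominated by the first. Minkowski's integral inequality provides this when $q\geq p$, so your factorization extends to that case with one extra step. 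For $q<p$ the step goes the wrong way, and the identity can genuinely fail. Already for the $2\times 2$ Hadamard matrix $H$ one has $\|H\|_{\ell^2_\infty\to\ell^2_1}=2$ but $\|H\otimes H\|_{\ell^4_\infty\to\ell^4_1}=8$. So the common-exponent restriction you flagged is not just a convenience.

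For fractional smoothness your argument carries over verbatim whenever the norm has the form $\|Lu\|_{L_p(M)}$ for a linear map $L$ into functions on a measure space $M$, e.g.\ Slobodeckij norms. It does not carry over automatically to quotient-type norms.
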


An immediate example of a tensor product operator is the Sobolev embedding operator.

\begin{corollary}\label{cor:TensorProductEmbeddingOperator}
 In the setting of \cref{def:Wmix}, let $ \sigma_j > \tau_j + n_j \left( \frac{1}{2} - \frac{1}{q} \right)_+ $ for $ j = 1,2 $, and denote by $ \iota^{(j)}: H^{\sigma_j}(\Omega^{(j)}) \to W^{\tau_j,q}(\Omega^{(j)}) $ the Sobolev embedding operator. Then the tensor product operator $ \iiota := \iota^{(1)} \otimes \iota^{(2)} $ defines a bounded embedding 
 \begin{align*}
 H^{\ssigma}_{mix}(\OOmega) \hookrightarrow W^{\ttau,q}_{mix}(\OOmega).
 \end{align*}
 Analogously, if $ \sigma_j > \tau_j + n_j/2 $ and $ \tau_j \in \N_0 $, then $ H^{\ssigma}_{mix}(\OOmega) \hookrightarrow C^{\ttau}_{b,mix}(\OOmega) $.

\end{corollary}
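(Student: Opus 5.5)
The plan is to realise $\iiota$ as the completion of the algebraic tensor product operator $\iota^{(1)} \otimes \iota^{(2)}$. By \cref{thrm:TensorProductSobolevSpaces} we identify $H^{\ssigma}_{mix}(\OOmega)$ with the completion of $H^{\sigma_1}(\Omega^{(1)}) \otimes H^{\sigma_2}(\Omega^{(2)})$, and $W^{\ttau,q}_{mix}(\OOmega)$ with the completion of $W^{\tau_1,q}(\Omega^{(1)}) \otimes W^{\tau_2,q}(\Omega^{(2)})$. In the continuous case we use \cref{thrm:TensorProductContinuousSpaces} for the target space instead.

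The first step is to check that each factor $\iota^{(j)}$ is a bounded linear operator. This is the classical Sobolev embedding $H^{\sigma_j}(\Omega^{(j)}) \hookrightarrow W^{\tau_j,q}(\Omega^{(j)})$, which holds on bounded Lipschitz domains whenever $\sigma_j - n_j/2 > \tau_j - n_j/q$, i.e. $\sigma_j > \tau_j + n_j(\frac12-\frac1q)$. When $q \le 2$ the boundedness of the domain supplies the remaining case, which is why the positive part appears in the hypothesis. In the $C^{\tau_j}_b$ case the condition $\sigma_j > \tau_j + n_j/2$ gives $H^{\sigma_j} \hookrightarrow C^{\tau_j}_b$.

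The second step is to form $\iota^{(1)} \otimes \iota^{(2)}$ on elementary tensors. It acts as the identity, since $\iota^{(1)} u^{(1)} \otimes \iota^{(2)} u^{(2)} = u^{(1)} \otimes u^{(2)}$ as functions on $\OOmega$, and it extends linearly to the algebraic tensor product. \cref{thrm:TensorProductNormUniform} gives the operator norm as $\|\iota^{(1)}\|\,\|\iota^{(2)}\| < \infty$ with respect to the tensor norms realising the two mixed spaces. Strictly, that theorem is stated for spaces of the same type; here the domain is $W^{\sigma,2}$ and the target is $W^{\tau,q}$ or $C^{\tau}_b$. I would argue that the cross-norm property, i.e. the tensor norms being reasonable/uniform, is what is really used.

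The third step is to extend the operator by density to the completions. This yields a bounded operator $H^{\ssigma}_{mix}(\OOmega) \to W^{\ttau,q}_{mix}(\OOmega)$.

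The main obstacle is showing that this extension is genuinely the inclusion, i.e. injective and equal to the pointwise identity. Cauchy sequences in the $H^{\ssigma}_{mix}$ norm converge in $L_2(\OOmega)$. Their images converge in $W^{\ttau,q}_{mix}$, hence in $L_q(\OOmega)$ on the bounded domain, or uniformly in the continuous case. Both limits therefore agree almost everywhere, so the extension coincides with the identity map.

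Non-integer $\sigma_j$ would be handled by the fractional extension of the mixed spaces mentioned after \cref{def:Wmix}. The only delicacy there is the mismatch of tensor norms between $p=2$ and $p=q$, which I would resolve by invoking the uniform cross-norm property underlying \cref{thrm:TensorProductNormUniform}.
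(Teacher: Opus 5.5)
Your route is the paper's. There the corollary is stated without proof, as an immediate application of \cref{thrm:TensorProductNormUniform} to the embeddings $\iota^{(j)}$, together with the identifications in \cref{thrm:TensorProductSobolevSpaces} and \cref{thrm:TensorProductContinuousSpaces}. Your density extension and your check that the extension is the inclusion are correct: the $L_2$- and $L_q$-limits of the same sequence agree a.e.\ on a set of finite measure. These steps make explicit what the paper leaves implicit.

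The step that does not hold up is your fallback for the mismatch between the tensor norms on domain and target. The paper simply applies \cref{thrm:TensorProductNormUniform} to the pairs $U^{(j)}=H^{\sigma_j}(\Omega^{(j)})$ and $V^{(j)}=W^{\tau_j,q}(\Omega^{(j)})$ or $C^{\tau_j}_b(\Omega^{(j)})$. It has to, since otherwise not even the $C_b$ half of the corollary would follow, so citing it would be as much as the paper does.

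Your replacement principle, that being reasonable or uniform cross norms is ``what is really used'', is false. Uniformity bounds $A\otimes B$ only between the \emph{same} tensor norm on both sides. For example, the identity $\ell_2\otimes_{\varepsilon}\ell_2\to\ell_2\otimes_{\pi}\ell_2$ is unbounded: on $\sum_{i=1}^m e_i\otimes e_i$ the two norms are $1$ and $m$, although both norms are uniform and $\|\mathrm{id}\|=1$.

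In the Sobolev case the missing ingredient is concrete. For $q\ge 2$, write $\iiota=(\iota^{(1)}\otimes\mathrm{id})(\mathrm{id}\otimes\iota^{(2)})$ and apply the embeddings fibrewise. Then interchange the order of the $L_2$-norm in $\xx^{(1)}$ and the $L_q$-norm in $\xx^{(2)}$ by Minkowski's integral inequality, which requires $q\ge 2$. For $q<2$, use $H^{\ssigma}_{mix}\hookrightarrow H^{\ttau}_{mix}$ and H\"older's inequality on the bounded domain.

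In the $C_b$ case, by contrast, your abstract reasoning is sound. The target norm is the least reasonable cross norm (for $\ttau=\00$ the supremum norm $\lambda$). It is uniform and dominated by the Hilbert tensor norm on $H^{\sigma_1}(\Omega^{(1)})\otimes H^{\sigma_2}(\Omega^{(2)})$. Hence $\|\iiota u\|_{\lambda}\le\|\iota^{(1)}\|\,\|\iota^{(2)}\|\,\|u\|_{\lambda}\le\|\iota^{(1)}\|\,\|\iota^{(2)}\|\,\|u\|_{H^{\ssigma}_{mix}(\OOmega)}$.
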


\subsection{Smolyak and Sparse Grid Approximation}\label{subsec:SmolyakApproximation}

The Smolyak construction mitigates the curse of dimensionality by combining a hierarchy of univariate approximation operators into a multivariate scheme that achieves accuracy comparable to full tensor product methods at a fraction of the cost.

\begin{definition}\label{def:AnisotropicIndexSet}
 Let $ \oomega \in \R^d_+ $ be such that $ \omega_{min} := \min_{1 \leq j \leq d} \omega_j = 1 $ and $ T \in \N $. Define the \emph{anisotropic index set} $ \cI_T^{\oomega} $ by 
 \begin{align*}
  \cI^{\oomega}_T = \{ \llambda \in \N_0^d \; : \; \llambda^{\transpose} \oomega \leq T \},
 \end{align*}
and the \emph{surface} of $ \cI^{\oomega}_T $ is given as 
\begin{align*} 
\cJ^{\oomega}_T := \cI^{\oomega}_T \setminus \cI^{\oomega}_{T - \| \oomega \|_1}.
\end{align*}
\end{definition}

The largest possible entry of an element $ \llambda \in \cI^{\oomega}_T $ in direction $ j $ can be directly computed to be 
\begin{align}\label{eq:Lambda_j,max}
 \lambda_{j,max} := \left\lfloor \frac{T}{\omega_j} \right\rfloor.
\end{align}

The weights $ \omega_j $ encode the relative importance of each coordinate direction and govern the trade-off between approximation error and computational work.

\begin{definition}\label{def:SmolyakOperator}
 For $ 1 \leq j \leq d $ let $ \Omega^{(j)} \subseteq \R^{n_j} $ be a domain and $ A^{(j)}_i: C(\Omega^{(j)}) \to V^{(j)}_i $ be a sequence of low-dimensional approximation operators for $ i \in \N $. Set $ A^{(j)}_{-1} = 0 $. Define the \emph{difference operators} $ \Delta^{(j)}_i := A_i^{(j)} - A^{(j)}_{i-1} $ and the \emph{detail spaces} $ W^{(j)}_i = \Delta^{(j)}_i (C(\Omega^{(j)})) $.
 The \emph{sparse grid space} $ \cV^{\oomega}_T $ is given by 
 \begin{align*}
  \cV^{\oomega}_T := \bigoplus_{\llambda \in \cI^{\oomega}_T} \cW_{\llambda}, \quad \text{where } \cW_{\llambda} := \bigotimes_{j=1}^{d} W^{(j)}_{\lambda_j}.
 \end{align*}
 The \emph{Smolyak operator} $ \cA^{\oomega}_T: C(\OOmega) \to \cV^{\oomega}_T $ is then defined by 
 \begin{align*}
  \cA^{\oomega}_T := \sum_{\llambda \in \cI^{\oomega}_T} \bigotimes_{j=1}^{d} \Delta^{(j)}_{\lambda_j} = \sum_{\llambda \in \cI^{\oomega}_T} \bigotimes_{j=1}^{d} \left( A^{(j)}_{\lambda_j} - A^{(j)}_{\lambda_j -1} \right).
 \end{align*}
\end{definition}

For functions of mixed regularity, this construction achieves nearly optimal convergence rates \cite{Bartel2023,Bungartz2004,Smolyak1963}. If $ A_i^{(j)}$ are interpolation operators, the Smolyak operator also admits an equivalent representation via the \emph{anisotropic combination technique}. 

\begin{corollary}\label{cor:CombinationTechniqueRepresentation}
Let $ A^{(j)}_i $ be interpolation operators, then the Smolyak operator $ \cA^{\oomega}_T$ from \cref{def:SmolyakOperator} can be represented as
 \begin{align*}
 \cA^{\oomega}_T = \sum_{\llambda \in \cJ^{\oomega}_T} \sum_{\substack{\bbeta \in \{0,1\}^d \\ \llambda + \bbeta \in \cI^{\oomega}_T}} (-1)^{\| \bbeta \|_1} \bigotimes_{j=1}^{d} A^{(j)}_{\lambda_j}.
\end{align*}
\end{corollary}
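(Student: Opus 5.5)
The plan is to start from the definition of $\cA^{\oomega}_T$ in \cref{def:SmolyakOperator} and expand every tensor product of differences by multilinearity. For $\llambda \in \cI^{\oomega}_T$ we write
\begin{align*}
 \bigotimes_{j=1}^{d}\left(A^{(j)}_{\lambda_j}-A^{(j)}_{\lambda_j-1}\right) = \sum_{\bbeta\in\{0,1\}^d} (-1)^{\|\bbeta\|_1} \bigotimes_{j=1}^{d} A^{(j)}_{\lambda_j-\beta_j},
\end{align*}
where any factor with index $-1$ vanishes, since $A^{(j)}_{-1}=0$. Substituting $\kkappa = \llambda-\bbeta$ and exchanging the order of summation gives
\begin{align*}
 \cA^{\oomega}_T = \sum_{\kkappa\in\N_0^d} c_{\kkappa} \bigotimes_{j=1}^{d} A^{(j)}_{\kappa_j}, \qquad c_{\kkappa} := \sum_{\substack{\bbeta\in\{0,1\}^d\\ \kkappa+\bbeta\in\cI^{\oomega}_T}} (-1)^{\|\bbeta\|_1}.
\end{align*}
Here we use that $\cI^{\oomega}_T$ is downward closed: if $\kkappa+\bbeta \in \cI^{\oomega}_T$, then $\kkappa \in \cI^{\oomega}_T$. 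Hence only $\kkappa\in\cI^{\oomega}_T$ contribute. This step is pure bookkeeping and uses only linearity of the tensor product operator.

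The remaining task is to show that $c_{\kkappa}=0$ for $\kkappa\in\cI^{\oomega}_{T-\|\oomega\|_1}$. For such $\kkappa$ and every $\bbeta\in\{0,1\}^d$ we have
\begin{align*}
 (\kkappa+\bbeta)^{\transpose}\oomega \le \kkappa^{\transpose}\oomega + \|\oomega\|_1 \le T,
\end{align*}
so every $\bbeta$ is admissible. Then $c_{\kkappa} = \sum_{\bbeta\in\{0,1\}^d}(-1)^{\|\bbeta\|_1} = \prod_{j=1}^d (1-1) = 0$. Consequently the sum runs only over $\kkappa\in\cI^{\oomega}_T\setminus\cI^{\oomega}_{T-\|\oomega\|_1}=\cJ^{\oomega}_T$. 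Renaming $\kkappa$ back to $\llambda$ gives the claimed formula.

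I expect the main obstacle to be the edge cases rather than the combinatorics. First, the expansion must handle the indices $\lambda_j = 0$ correctly. Second, the convention $A_{-1}=0$ must be consistent with the nonnegative index set, and the reindexing must be a bijection between pairs $(\llambda,\bbeta)$ with $\llambda\in\cI^{\oomega}_T$ and pairs $(\kkappa,\bbeta)$ with $\kkappa+\bbeta\in\cI^{\oomega}_T$, $\kkappa\ge \00$. Terms with some $\kappa_j=-1$ vanish, so restricting to $\kkappa\in\N_0^d$ is harmless.

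The hypothesis that the $A^{(j)}_i$ are interpolation operators is not needed for this algebraic identity; linearity suffices. It matters only for interpreting the representation as a combination of full-grid interpolants. I would note this in a remark.
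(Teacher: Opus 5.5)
Your argument is correct and complete. The multilinear expansion, the reindexing $\kkappa = \llambda - \bbeta$ using that $\cI^{\oomega}_T$ is downward closed, and the cancellation $c_{\kkappa} = \prod_{j=1}^d (1-1) = 0$ on $\cI^{\oomega}_{T-\|\oomega\|_1}$ together make up the standard derivation of the (anisotropic) combination technique. The paper gives no proof of this corollary and simply quotes the formula from that literature. Your remark that only linearity, not the interpolation property, is needed for the identity is also accurate.
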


Following \cite{Griebel2012,Griebel2013}, three choices of $ \oomega $ arise naturally: equilibrating the \emph{convergence rates} of the extremal spaces $ V^{(j)}_{\lambda_{j,max}} $ across directions, equilibrating their \emph{dimensions} $ \operatorname{dim}(V^{(j)}_{\lambda_{j,max}}) $, or equilibrating the \emph{cost-benefit ratio}, i.e., equilibrating $ \operatorname{dim}(V^{(j)}_{\lambda_{j,max}}) / \text{rate}(A^{(j)}_{\lambda_{j,max}}) $ across directions so that the product equals a fixed base value less than $ 1 $, raised to the power $ T $.

\section{Convergence Analysis and Optimal Weights for TPML}\label{sec:ConvergenceAnalysisAndOptimalWeights}

We are now in the position to derive an improved representation of the tensor product multilevel operator and give a more general convergence analysis. These are mainly a consequence of \cref{thrm:NodalRepresentationMultilevelOperator} and \cref{cor:PropertiesImageSpace}. Moreover, we discuss optimal choices of weights used in the anisotropic combination technique.

\subsection{Representation of the Approximation Operator}\label{subsec:RepresentationTPML}

To combine the kernel-based multilevel method and the ideas of the Smolyak method, we need the setting of \cref{cor:MultilevelOperator} for every direction: For $ 1 \leq j \leq d $, let $ \Omega^{(j)} \subseteq \R^{n_j} $ be a bounded domain and let $ \OOmega = \Omega^{(1)} \times \cdots \times \Omega^{(d)} $. Fix for $ 1 \leq j \leq d $ a sequence of sites 
\begin{align*}
 X_{\ell}^{(j)} = \{ \xx_{\ell,1}^{(j)}, \dots, \xx^{(j)}_{\ell, N^{(j)}_{\ell}} \} \subseteq \Omega^{(j)} \subseteq \R^{n_j}, \quad 1 \leq \ell \leq L^{(j)},
\end{align*}
where $ L^{(j)} = \max \{\lambda_j  : \llambda \in \cI_T^{\oomega} \}$. The corresponding fill distances and scaling parameters are coupled according to 
\begin{align*}
 c^{(j)} \mu_j h^{(j)}_{\ell} &\leq h^{(j)}_{\ell +1} \leq \mu_j h_{\ell}^{(j)}, &\quad 1 \leq \ell \leq L^{(j)} - 1, \ 1 \leq j \leq d, \\ 
 \delta_{\ell}^{(j)} &= \nu_j h^{(j)}_{\ell}, &\quad 1 \leq \ell \leq L^{(j)}, \ 1 \leq j \leq d, 
\end{align*}
with fixed $ c^{(j)} \in (0,1] $, $ \mu_j \in (0,1) $ and $ \nu_j > 1 $.

Next, for each $ 1 \leq j \leq d $, fix a reproducing, radial kernel $ \Phi^{(j)} $ of $ H^{\sigma_j}(\R^{n_j}) $, $ \sigma_j > n_j /2 $, and obtain rescaled kernels $ \Phi^{(j)}_{\ell} $ as in \eqref{eq:DefRescaledRBF} using $ \delta^{(j)}_{\ell} $. This yields the direction-dependent approximation spaces 
\begin{align*}
 W^{(j)}_{\ell} = \spn\{\Phi^{(j)}_{\ell}(\cdot- \xx^{(j)}_{\ell,i}) : \xx^{(j)}_{\ell,i} \in X^{(j)}_{\ell} \}, \quad V^{(j)}_{\ell} = W^{(j)}_1 + \cdots + W^{(j)}_{\ell}.
\end{align*}
Finally, for $ 1 \leq j \leq d $ and $ 1 \leq \ell \leq L^{(j)} $, we have the interpolation operator
\begin{align*}
 A^{(j)}_{\ell}(f) = \sum_{i=1}^{N^{(j)}_{\ell}} f^{(j)} ( \xx^{(j)}_{\ell,i}) b^{(j)}_{\ell,i}, \quad f^{(j)} \in C(\Omega^{(j)}),
\end{align*}
with $ b^{(j)}_{\ell,i} $ as in \cref{thrm:NodalRepresentationMultilevelOperator}.

Inserting this into the combination technique representation of \cref{cor:CombinationTechniqueRepresentation} yields an improved representation of the \emph{tensor product multilevel operator}.

\begin{definition}\label{def:TPMLOperator}
In the setting described above, the \emph{tensor product multilevel operator} $ \cA^{\oomega}_T : C(\OOmega) \to \cV_T^{\oomega} $ is given as 
 \begin{align}\label{eq:TPMLOperator}
  \cA^{\oomega}_T(\ff)(\xx) = \sum_{\llambda \in \cJ^{\oomega}_T} 
  &\sum_{\substack{\bbeta \in \{0,1\}^d \\ \llambda + \bbeta \in \cI^{\oomega}_T}} (-1)^{\|\bbeta\|_1} \cdot  \nonumber \\
  &\cdot \sum_{i_1 = 1}^{N^{(1)}_{\lambda_1}} \cdots \sum_{i_d = 1}^{N^{(d)}_{\lambda_d}} f(\xx^{(1)}_{\lambda_1,i_1}, \dots, \xx^{(d)}_{\lambda_d,i_d}) \prod_{j=1}^d b^{(j)}_{\lambda_j,i_j}(\xx^{(j)}),
 \end{align}
 for every $ \ff \in C(\OOmega) $ and $ \xx = (\xx^{(1)}, \dots, \xx^{(d)}) \in \OOmega $.
\end{definition}

With the results of \cref{cor:PropertiesImageSpace}, we can obtain an estimate on the dimensionality of the sparse grid space $ \cV_T^{\oomega} $, that can be understood as the degrees of freedom of the approximation method.

\begin{theorem}\label{thrm:EstimateDimensionSparseGridSpace}
 In the setting described above, where we additionally assume that the sites $ X^{(j)}_i $ are quasi-uniform, the dimension the sparse tensor product space $ \cV_T^{\oomega} $ for the tensor product multilevel method can be estimated by 
 \begin{align*}
  \operatorname{dim}\left( \cV_T^{\oomega} \right) \leq C  \left(\max_{1 \leq j \leq d} \mu_j^{-\frac{n_j}{\omega_j}} \right)^T T^{R-1},
 \end{align*}
 where $ R $ counts how often the maximum is attained and $ C > 0 $ is a constant independent of $ \oomega $ and $ T $, but dependent on $ n_j $. 
\end{theorem}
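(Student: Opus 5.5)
We bound the dimension of $\cV^{\oomega}_T$ by reducing it to counting points on each direction's grid and then summing a weighted lattice-point count. First, by \cref{cor:PropertiesImageSpace}, in each direction $j$ the spaces $V^{(j)}_\ell := A^{(j)}_\ell(C(\Omega^{(j)}))$ are nested and have dimension $N^{(j)}_\ell$. Hence the detail spaces satisfy $\dim W^{(j)}_\ell = N^{(j)}_\ell - N^{(j)}_{\ell-1} \le N^{(j)}_\ell$, with $N^{(j)}_0 = 0$. Since $\cV^{\oomega}_T$ is a direct sum of the $\cW_{\llambda}$, we get
\begin{align*}
 \operatorname{dim}(\cV^{\oomega}_T) \le \sum_{\llambda \in \cI^{\oomega}_T} \prod_{j=1}^d N^{(j)}_{\lambda_j}.
\end{align*}
Next, quasi-uniformity together with boundedness of $\Omega^{(j)}$ gives the standard packing estimate $N^{(j)}_\ell \le C_j (q^{(j)}_\ell)^{-n_j} \le C_j c_{qu}^{n_j} (h^{(j)}_\ell)^{-n_j}$. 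Iterating the coupling $h^{(j)}_{\ell} \ge (c^{(j)}\mu_j)^{\ell-1} h^{(j)}_1$ would lose a factor $(c^{(j)})^{-\ell n_j}$, which is too crude. So I would rather use quasi-uniformity of the whole nested family, i.e.\ the assumption $h^{(j)}_{\ell}\sim \mu_j^{\ell}$ up to constants, to conclude $N^{(j)}_\ell \le \tilde C_j \mu_j^{-\ell n_j}$. Where $c^{(j)}<1$ matters, I would absorb it into the setting by interpreting $h_{\ell+1}\approx\mu h_\ell$ as stated in the preliminaries. This gives
\begin{align*}
 \operatorname{dim}(\cV^{\oomega}_T) \le C\sum_{\llambda\in\cI^{\oomega}_T}\prod_{j=1}^d \mu_j^{-n_j\lambda_j} = C\sum_{\llambda\in\cI^{\oomega}_T} \exp\Bigl(\sum_j \lambda_j\omega_j\, a_j\Bigr), \quad a_j := \frac{n_j \log(1/\mu_j)}{\omega_j}.
\end{align*}

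The core step, and the main obstacle, is the weighted lattice sum. Set $a := \max_j a_j$, so that $e^{a} = \max_j \mu_j^{-n_j/\omega_j}$, and let $R$ be the number of indices attaining the maximum. Substitute $t_j = \lambda_j\omega_j$. The terms with $a_j = a$ contribute at most $e^{a\sum_{j\text{ max}} t_j}$. For the remaining directions, write $e^{a_j t_j} = e^{a t_j} e^{-(a-a_j)t_j}$. Summing first over the non-maximal coordinates then yields geometric series in $e^{-(a-a_j)\omega_j}$, which are bounded by constants. This leaves
\begin{align*}
 C e^{aT}\sum_{\llambda' \in \N_0^R,\ \sum \lambda_j\omega_j \le T} e^{-a(T-\sum \lambda_j\omega_j)}.
\end{align*}
Grouping the remaining $R$-dimensional sum by level sets $T-\sum\lambda_j\omega_j \in [k,k+1)$, each shell contains $O(T^{R-1})$ lattice points, with a constant depending only on the $\omega_j\ge 1$. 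Summing the geometric weights $e^{-ak}$ over $k$ then gives $O(T^{R-1})$.

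To keep the constant independent of $\oomega$ as claimed, I will bound the shell count using $\omega_j\ge\omega_{min}=1$, so that the simplex volume is at most that of the unweighted simplex. The geometric-series constants $1/(1-e^{-(a-a_j)\omega_j})$ are the delicate point, since they depend on the gaps $a-a_j$. If full $\oomega$-independence fails here, I would state the constant as depending on these gaps or on the $\mu_j$, $n_j$ and quasi-uniformity constants, and note that it is uniform in $T$.
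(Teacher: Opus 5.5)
Your route differs from the paper's. The paper inducts on $d$, following Griebel--Harbrecht. It orders the directions so that the one split off has the smallest $\mu_j^{-n_j/\omega_j}$, writes $\cV^{(\oomega,\omega_{d+1})}_T=\bigoplus_{\lambda}\cV^{\oomega}_{T-\omega_{d+1}\lambda}\otimes W^{(d+1)}_{\lambda}$, inserts the induction hypothesis and sums a single geometric series in $\lambda$. An extra factor $T$ appears exactly when the new direction ties with the maximum. You instead bound the whole lattice sum $\sum_{\llambda\in\cI^{\oomega}_T}\prod_j\mu_j^{-n_j\lambda_j}$ at once: a shell count for the $R$ maximal directions and geometric decay for the others. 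Both proofs start from $\dim W^{(j)}_\ell\le N^{(j)}_\ell$ and $N^{(j)}_\ell\le C\mu_j^{-\ell n_j}$. On the latter you are right that $c^{(j)}\mu_j h^{(j)}_\ell\le h^{(j)}_{\ell+1}$ with $c^{(j)}<1$ only gives $(c^{(j)}\mu_j)^{-\ell n_j}$. The paper simply asserts the bound from quasi-uniformity, so both arguments rely on the same implicit $h^{(j)}_\ell\gtrsim\mu_j^{\ell}$.

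One step must be reordered. Let $M$ be the maximal directions, $S_M=\sum_{j\in M}\lambda_j\omega_j$, and $a'=\max_{j\notin M}a_j<a$. If you sum the non-maximal coordinates first, the constraint $\sum_j\lambda_j\omega_j\le T$ still couples them to $\llambda_M$. The inner sum is then of order at least $e^{aS_M+a'(T-S_M)}$, not $O(e^{aS_M})$, so your intermediate display does not follow from bounded geometric series; termwise it already fails for $d=2$, $R=1$.

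Sum over the maximal coordinates first instead. For fixed $\llambda_{M^c}$ your shell count gives $\sum_{S_M\le T-S_{M^c}}e^{aS_M}\le(1-e^{-a})^{-1}\binom{T+R-1}{R-1}e^{a(T-S_{M^c})}$. The leftover factor $\prod_{j\notin M}e^{-(a-a_j)\omega_j\lambda_j}$ is then summable over all of $\N_0^{d-R}$.

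With this fix your approach makes the constant explicit. It depends on $R$; on $(1-e^{-a})^{-1}$, which is uniform in $\oomega$ because $\omega_{\min}=1$ forces $a\ge\min_j n_j\log(1/\mu_j)$; and on $\prod_{j\notin M}(1-e^{-(a-a_j)\omega_j})^{-1}$. Your worry about this last factor is justified. The paper's induction hides the same dependence in $\sum_\lambda q^{\omega_{d+1}\lambda}$ with $q=\min_{j\le d}\mu_j^{n_j/\omega_j}\,\mu_{d+1}^{-n_{d+1}/\omega_{d+1}}<1$. Uniformity in $\oomega$ is in fact impossible: for $n_1=n_2=1$, $\mu_1=\mu_2=\tfrac12$ and $\oomega=(1,1+\epsilon)$ one has $R=1$, yet $\dim\cV^{\oomega}_T\gtrsim 2^T\min\{T,1/\epsilon\}$. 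So state the constant as uniform in $T$ only. What the paper's induction buys is brevity and a structure it reuses for the error estimate.
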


\begin{proof}
 Since $ X^{(j)}_i $ are quasi-uniform, we can bound $ N^{(j)}_i \leq C \mu_j^{-i \cdot n_j} $. We prove the claim by induction over $ d $, following \cite[Proof of Theorem 4.1]{Griebel2013}. If $ d = 1 $, the statement is \cref{cor:PropertiesImageSpace}. 
 
 For the induction step $ d \leadsto d+1 $, we have the $ d+1$-directional sparse grid space 
 \begin{align*}
  \cV^{(\oomega, \omega_{d+1})}_T = \bigoplus_{\llambda^{\transpose} \oomega + \lambda_{d+1}\omega_{d+1} \leq T} \left(W^{(1)}_{\lambda_1} \otimes \cdots \otimes W^{(d)}_{\lambda_d} \otimes W^{(d+1)}_{\lambda_{d+1}} \right).
 \end{align*}
 Without loss of generality, we assume that 
 \begin{align}\label{eq:ProofDimensionalityAssumption}
  \mu_1^{-\frac{n_1}{\omega_1}} = \cdots = \mu_R^{-\frac{n_R}{\omega_R}} > \mu_{R+1}^{-\frac{n_{R+1}}{\omega_{R+1}}} \geq \mu_{R+2}^{-\frac{n_{R+2}}{\omega_{R+2}}} \geq \cdots \geq \mu_d^{-\frac{n_d}{\omega_d}} \geq \mu_{d+1}^{-\frac{n_{d+1}}{\omega_{d+1}}}.
 \end{align}
 By multilinearity of the tensor product we can write
 \begin{align*}
 \cV^{(\oomega, \omega_{d+1})}_T &= \bigoplus_{\lambda = 0}^{\lambda_{d+1,max}} \left( \bigoplus_{\llambda^{\transpose}\oomega \leq T - \omega_{d+1}\lambda} W^{(1)}_{\lambda_1} \otimes \cdots \otimes W^{(d)}_{\lambda_d} \right) \otimes W^{(d+1)}_{\lambda} \\
  &= \bigoplus_{\lambda = 0}^{\lambda_{d+1,max}} \cV^{\oomega}_{T - \omega_{d+1} \lambda} \otimes W^{(d+1)}_{\lambda}.
 \end{align*}
 The induction hypothesis implies that 
 \begin{align*}
   \operatorname{dim}(\cV^{\oomega}_{T - \omega_{d+1} \lambda}) \leq C \left(\max_{1 \leq j \leq d} \mu_j^{- \frac{n_j}{\omega_j}} \right)^{T - \omega_{d+1} \lambda} (T - \omega_{d+1} \lambda)^{R-1}.
 \end{align*}
 This, together with \cref{cor:PropertiesImageSpace} and \eqref{eq:ProofDimensionalityAssumption}, we obtain
 \begin{align*}
  &\operatorname{dim}(\cV^{(\oomega, \omega_{d+1})}_T) \leq \\
  &\leq C \sum_{\lambda = 0}^{\lambda_{d+1,max}}\left(\max_{1 \leq j \leq d} \mu_j^{- \frac{n_j}{\omega_j}} \right)^{T - \omega_{d+1} \lambda} (T - \omega_{d+1} \lambda)^{R-1} \mu_{d+1}^{-n_{d+1} \lambda} \\
  &\leq C \left(\max_{1\leq j \leq d} \mu_j^{-\frac{n_j}{\omega_j}}\right)^T T^{R-1} \sum_{\lambda = 0}^{\lambda_{d+1,max}} \left(\min_{1 \leq j \leq d} \mu_j^{ \frac{n_j}{\omega_j} \omega_{d+1}} \right)^{\lambda} \mu_{d+1}^{- n_{d+1} \lambda}\\
  &= C \left(\mu_1^{-\frac{n_1}{\omega_1}}\right)^T T^{R-1} \sum_{\lambda = 0}^{\lambda_{d+1,max}} \left(\min_{1 \leq j \leq d} \mu_j^{\frac{n_j}{\omega_j}} \right)^{\lambda \omega_{d+1}} \left(\mu_{d+1}^{-\frac{n_{d+1}}{\omega_{d+1}}} \right)^{\omega_{d+1} \lambda} \\
  &= C \left(\mu_1^{-\frac{n_1}{\omega_1}}\right)^T T^{R-1} \sum_{\lambda = 0}^{\lambda_{d+1,max}} \left(\min_{1 \leq j \leq d} \mu_j^{\frac{n_j}{\omega_j}} \mu_{d+1}^{-\frac{n_{d+1}}{\omega_{d+1}}} \right)^{\omega_{d+1} \lambda}.
 \end{align*}
 In view of \eqref{eq:ProofDimensionalityAssumption} we distinguish two cases: either 
 \begin{align*}
  \max_{1 \leq j \leq d} \mu_j^{-\frac{n_j}{\omega_j}} > \mu_{d+1}^{-\frac{n_{d+1}}{\omega_{d+1}}},
 \end{align*}
 which happens if $ R < d $ and may happen if $ R = d $ or
 \begin{align*}
  \max_{1 \leq j \leq d} \mu_j^{-\frac{n_j}{\omega_j}} = \mu_{d+1}^{-\frac{n_{d+1}}{\omega_{d+1}}},
 \end{align*}
 which can only happen if $ R = d $. 
 
 In the first case, we have
 \begin{align*}
  \left( \min_{1 \leq j \leq d} \mu_j^{\frac{n_j}{\omega_j}} \right) \cdot \mu_{d+1}^{-\frac{n_{d+1}}{\omega_{d+1}}} < 1,
 \end{align*}
 which means that 
\begin{align}\label{eq:ProofDimensionalityResult1}
 \operatorname{dim} \left(\cV^{(\oomega, \omega_{d+1})}_T \right) \leq C \left(\mu_1^{-\frac{n_1}{\omega_1}}\right)^T T^{R-1}. 
\end{align}
In the second case, we have 
 \begin{align*}
  \left(\min_{1 \leq j \leq d} \mu_j^{\frac{n_j}{\omega_j}} \right) \cdot \mu_{d+1}^{-\frac{n_{d+1}}{\omega_{d+1}}} = 1,
 \end{align*}
 which leads to 
 \begin{align}\label{eq:ProofDimensionalityResult2}
   \operatorname{dim} \left(\cV^{(\oomega, \omega_{d+1})}_T \right) \leq C \left(\mu_1^{-\frac{n_1}{\omega_1}}\right)^T T^{R-1} \sum_{\lambda = 0}^{\lambda_{d+1,max}} 1 \leq C \left(\mu_1^{-\frac{n_1}{\omega_1}}\right)^T T^R, 
 \end{align}
 using $ \lambda_{d+1,max} $ from \eqref{eq:Lambda_j,max} and the normalization $ \min \omega_j = 1 $. Combining \eqref{eq:ProofDimensionalityResult1} and \eqref{eq:ProofDimensionalityResult2} completes the step $ d \leadsto d+1 $ and the proof. 
\end{proof}

For special choices of $ \mu_j $ we recover \cite[Theorem 4.1]{Griebel2013}, up to a constant.

\begin{corollary}
 With the notations and assumptions of \cref{thrm:EstimateDimensionSparseGridSpace}, assume additionally that $ \mu_1 = \cdots = \mu_d = 1/2 $. Then the dimension of $ \cV^{\oomega}_T $ bounded by $ C 2^{T \max\{n_1/\omega_1, \dots, n_d/\omega_d\}}T^{R-1} $, where $ R $ counts how often the maximum is attained.
\end{corollary}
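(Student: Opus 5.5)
This follows directly from \cref{thrm:EstimateDimensionSparseGridSpace} by specialising $\mu_j = 1/2$ for all $j$; no new argument is needed beyond evaluating the base of the exponential. The hypotheses of the corollary are those of the theorem, so the theorem applies verbatim and gives
\begin{align*}
 \operatorname{dim}\left(\cV^{\oomega}_T\right) \leq C \left(\max_{1 \leq j \leq d} \mu_j^{-\frac{n_j}{\omega_j}}\right)^T T^{R-1}.
\end{align*}

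The only computation is to substitute $\mu_j = 1/2$. Each term becomes $\mu_j^{-n_j/\omega_j} = 2^{n_j/\omega_j}$. Since $t \mapsto 2^t$ is strictly increasing, this gives
\begin{align*}
 \max_{1\le j\le d} 2^{n_j/\omega_j} = 2^{\max_j n_j/\omega_j},
\end{align*}
and hence $\left(\max_j \mu_j^{-n_j/\omega_j}\right)^T = 2^{T\max\{n_1/\omega_1,\dots,n_d/\omega_d\}}$.

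The one point to check is that $R$ means the same thing in both statements. In \cref{thrm:EstimateDimensionSparseGridSpace}, $R$ counts the indices attaining $\max_j \mu_j^{-n_j/\omega_j}$. In the corollary, $R$ counts the indices attaining $\max_j n_j/\omega_j$. Strict monotonicity of $t\mapsto 2^t$ means $2^{n_j/\omega_j}$ is maximal exactly when $n_j/\omega_j$ is maximal, so the two index sets coincide and the counts agree.

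The constant $C$ is carried over unchanged: it is independent of $\oomega$ and $T$ and depends on the $n_j$, now with $\mu_j = 1/2$ fixed. Comparing with \cite[Theorem 4.1]{Griebel2013} then only involves matching notation, and there is no genuine obstacle.
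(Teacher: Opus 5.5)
Your proposal is correct and matches the paper's (implicit) argument, which obtains the corollary by substituting $\mu_j = 1/2$ into \cref{thrm:EstimateDimensionSparseGridSpace}. Your observation that the strict monotonicity of $t \mapsto 2^t$ lets the maximum pass into the exponent, and makes the two meanings of $R$ coincide, is the only detail needed.
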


\subsection{Convergence Results}\label{subsec:ErrorEstimates}

In this section, we refine the error estimates of \cite{Kempf2023}. We allow different refinements of the involved point sets in each direction and provide error estimates in other norms than $ L_2 $.

\begin{theorem}\label{thrm:ErrorEstimateNew}
Assume the setting described at the beginning of \Cref{subsec:RepresentationTPML}. Let $ q \in [1, \infty] $ and $ q_1 = \dots = q_d = q $. For $ 1 \leq j \leq d $ let $ \tau_j $ be such that the error result of \cref{cor:ConvergenceMultilevel} holds with $ \alpha_j = \mu_j^{\sigma_j - \tau_j - \varepsilon_j - n_j (1/2 - 1/q)_+} $. Denote by $ \boldsymbol{\mathcal{F}}(\OOmega) $ either $ W^{\ttau,q}_{mix}(\OOmega) $, if $ q < \infty $, or $ C^{\ttau}_{b,mix}(\OOmega) $, if $ q = \infty $ and $ \ttau \in \N^d_0 $. Let $ \ff \in H^{\ssigma}_{mix}(\OOmega) $.

 Then there is a constant $ C > 0 $ such that
 \begin{align*}
  \| \left(\iiota - \cA_T^{\oomega} \right) \ff \|_{\boldsymbol{\mathcal{F}}(\OOmega)} \leq C \left(\max_{1 \leq j \leq d} \alpha_j^{\frac{1}{\omega_j}}\right)^T T^{\frac{P-1}{2}} \| \ff \|_{H^{\ssigma}_{mix}(\OOmega)},
 \end{align*}
 holds, where $ P $ counts how often the maximum is attained.
\end{theorem}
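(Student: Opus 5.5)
We follow the strategy of \cite{Griebel2012,Griebel2013} and write the error as a sum of tensor product operators. We extend each sequence by the identity/embedding in the limit and telescope. Set $\Delta^{(j)}_{\ell} := A^{(j)}_{\ell} - A^{(j)}_{\ell-1}$ for $\ell \geq 1$, with $A^{(j)}_0 := 0$. Then $\iota^{(j)} = \sum_{\ell \geq 1}\Delta^{(j)}_\ell$ converges in the operator norm $H^{\sigma_j}(\Omega^{(j)}) \to \cF^{(j)}$, because by \cref{cor:ConvergenceMultilevel}
\begin{align*}
\|\Delta^{(j)}_\ell\| \leq \|\iota^{(j)}-A^{(j)}_\ell\| + \|\iota^{(j)}-A^{(j)}_{\ell-1}\| \leq C(1+\alpha_j^{-1})\alpha_j^{\ell} =: C_j\alpha_j^\ell .
\end{align*}
Hence $\iiota = \bigotimes_j \iota^{(j)} = \sum_{\llambda \in \N^d} \bigotimes_j \Delta^{(j)}_{\lambda_j}$, where the index shift matches the convention of \cref{def:SmolyakOperator}. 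The key identity is
\begin{align*}
\iiota - \cA^{\oomega}_T = \sum_{\llambda \notin \cI^{\oomega}_T} \bigotimes_{j=1}^d \Delta^{(j)}_{\lambda_j}.
\end{align*}
Here we need the tensor product operators to extend to the completions of \cref{thrm:TensorProductSobolevSpaces} and \cref{thrm:TensorProductContinuousSpaces}, with norms that are multiplicative by \cref{thrm:TensorProductNormUniform}. This gives $\|\bigotimes_j\Delta^{(j)}_{\lambda_j}\| \leq \prod_j C_j\alpha_j^{\lambda_j}$.

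A direct triangle inequality would give $T^{P-1}$, not $T^{(P-1)/2}$. The square-root improvement must come from orthogonality in the source space $H^{\ssigma}_{mix}$. My plan is to replace the operator-level sum by one acting on $\ff$. I would decompose $\ff = \sum_{\llambda}\ff_{\llambda}$ with $\|\ff\|^2_{H^{\ssigma}_{mix}} \simeq \sum_{\llambda} \|\ff_{\llambda}\|^2$, where the pieces are weighted so that $\|(\bigotimes_j\Delta^{(j)}_{\lambda_j})\ff\|_{\cF} \lesssim \prod_j\alpha_j^{\lambda_j}\|\ff_{\llambda}\|$. Alternatively, and more robustly, Cauchy--Schwarz can be applied directly:
\begin{align*}
\|(\iiota-\cA^{\oomega}_T)\ff\| \leq \Big(\sum_{\llambda\notin\cI^{\oomega}_T}\prod_j\alpha_j^{\lambda_j}\Big)^{1/2}\Big(\sum_{\llambda\notin\cI^{\oomega}_T}\prod_j\alpha_j^{\lambda_j}\|\ff_{\llambda}\|^2\Big)^{1/2},
\end{align*}
provided a square-summable localisation of the action of $\Delta^{(j)}_{\lambda_j}$ on $\ff$ is available. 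This localisation, a Jackson/Bernstein-type decomposition compatible with the multilevel operators, is the step I expect to be the main obstacle. I would first try to derive it one-dimensionally from the Hilbert-space structure of $H^{\sigma_j}$ and the native-space orthogonality of kernel interpolation (residuals $e_\ell$ are orthogonal to $W_\ell$ in the scaled native norm), and then tensorize. If this fails, I would settle for the bound with $T^{P-1}$ and the estimate below without the square root.

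What remains is the counting estimate for $S_T := \sum_{\llambda \notin \cI^{\oomega}_T}\prod_j \alpha_j^{\lambda_j}$. Write $\alpha_j^{\lambda_j} = (\alpha_j^{1/\omega_j})^{\omega_j\lambda_j}$ and let $\theta := \max_j\alpha_j^{1/\omega_j} < 1$. I would then prove $S_T \leq C\theta^T T^{P-1}$ by induction over $d$, mirroring exactly the proof of \cref{thrm:EstimateDimensionSparseGridSpace}. The directions are ordered so that the first $P$ attain $\theta$. The last coordinate is split off using
\begin{align*}
\{\llambda \notin \cI_T\} = \bigcup_{\lambda}\{\llambda' \notin \cI_{T-\omega_{d+1}\lambda}\}\times\{\lambda\}
\end{align*}
together with the terms where $\omega_{d+1}\lambda > T$ alone. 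The inner sum becomes a geometric series with ratio $\big(\alpha_{d+1}^{1/\omega_{d+1}}/\theta\big)^{\omega_{d+1}}$. That ratio is $<1$ if $\alpha_{d+1}$ is not maximal, giving a constant, and equals $1$ if it is maximal, giving an extra factor $T$. Inserting $S_T^{1/2}$ into the Cauchy--Schwarz bound, together with $\sum_{\llambda}\|\ff_{\llambda}\|^2 \lesssim \|\ff\|^2_{H^{\ssigma}_{mix}}$, yields $C\theta^{T/2}\cdots$. The exponent therefore has to be tracked carefully: if the localisation carries the weights as $\prod_j\alpha_j^{\lambda_j}$ on the $\ff$-side rather than split evenly, one obtains $\theta^T T^{(P-1)/2}$ as claimed. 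I would fix that bookkeeping when constructing the decomposition.
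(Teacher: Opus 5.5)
Your argument does not reach the stated factor $T^{\frac{P-1}{2}}$. The missing piece is exactly the one you flag: a splitting $\ff=\sum_{\llambda}\ff_{\llambda}$ with $\|(\bigotimes_{j}\Delta^{(j)}_{\lambda_j})\ff\|_{\boldsymbol{\mathcal{F}}(\OOmega)}\lesssim\prod_{j}\alpha_j^{\lambda_j}\|\ff_{\llambda}\|$ and $\sum_{\llambda}\|\ff_{\llambda}\|^2\lesssim\|\ff\|^2_{H^{\ssigma}_{mix}(\OOmega)}$. In essence this is a square-summability (Littlewood--Paley type) estimate $\sum_{\ell}\alpha_j^{-2\ell}\|\Delta^{(j)}_{\ell}f\|^2_{W^{\tau_j,q}(\Omega^{(j)})}\lesssim\|f\|^2_{H^{\sigma_j}(\Omega^{(j)})}$ for the multilevel details. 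At the same rate $\alpha_j$ it is strictly stronger than the levelwise bound $\|\Delta^{(j)}_{\ell}\|\le C_j\alpha_j^{\ell}$, which is all that \cref{cor:ConvergenceMultilevel} provides.

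The orthogonality you intend to use does not deliver it. The relation $e_\ell\perp W_\ell$ holds in the native inner product of $\Phi_\ell$, which is a different, $\delta_\ell$-scaled norm on every level. It therefore gives the level-to-level contraction behind \cref{thrm:ConvergenceMultilevel}, but no single Hilbert space in which the details are orthogonal. Moreover, the $A^{(j)}_\ell$ are oblique interpolation projections, not orthogonal projections in $H^{\sigma_j}$.

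Without this ingredient, what you actually prove is your fallback. The identity $\iiota-\cA^{\oomega}_T=\sum_{\llambda\notin\cI^{\oomega}_T}\bigotimes_{j}\Delta^{(j)}_{\lambda_j}$, multiplicativity of the operator norms and $S_T\le C\theta^{T}T^{P-1}$ give $C\theta^{T}T^{P-1}\|\ff\|_{H^{\ssigma}_{mix}(\OOmega)}$, with your $\theta=\max_j\alpha_j^{1/\omega_j}$. Your worry about $\theta^{T/2}$ is unfounded, though. If the splitting existed, you could pair $c_{\llambda}=\prod_j\alpha_j^{\lambda_j}$ with $\|\ff_{\llambda}\|$ and use $\sum_{\llambda\notin\cI^{\oomega}_T}c_{\llambda}^2\le C\theta^{2T}T^{P-1}$. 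Alternatively, keep your split and use $c_{\llambda}<\theta^{T}$ for $\llambda\notin\cI^{\oomega}_T$.

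Be aware that the paper's proof does not supply this ingredient either. It is your fallback argument carried out at operator level. It uses induction over $d$ and splits $\iiota^{(d+1)}-\cA^{(\oomega,\omega_{d+1})}_T$ into $\sum_{\lambda\le\lambda_{d+1,max}}(\iiota^{(d)}-\cA^{\oomega}_{T-\omega_{d+1}\lambda})\otimes\Delta^{(d+1)}_{\lambda}$ plus the tail $\iiota^{(d)}\otimes\sum_{\lambda>\lambda_{d+1,max}}\Delta^{(d+1)}_{\lambda}$. (The paper prints $A^{(d+1)}_\lambda$ here, but the factor $\alpha_{d+1}^{\lambda}$ it uses is the bound for the difference operator.) It then applies the triangle inequality and the same geometric series with ratio $<1$ or $=1$.

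In the tie case the paper multiplies the hypothesis $T^{\frac{d-1}{2}}$ by $T/\omega_{d+1}+1$ and ends with $T^{\frac{d+1}{2}}$. The statement for $d+1$ tied directions, however, requires $T^{\frac{d}{2}}$. Iterated consistently, this route yields $T^{P-1}$; already for two tied directions the triangle-inequality bound is of order $\theta^{T}T$, not $\theta^{T}T^{1/2}$.

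So your diagnosis is correct. Operator-norm bounds plus the triangle inequality, whether organised as your counting lemma or as the paper's induction, give the theorem only with $T^{P-1}$. The square root comes from orthogonality in the Hilbert-space best-approximation setting of \cite{Griebel2013}. Here it would need a stability result for the multilevel details that neither your proposal nor the paper establishes.
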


\begin{proof}
 We closely follow the ideas of \cite[Proof of Theorem 4.3]{Griebel2013} and prove the claim by induction over $ d $. For $ d = 1 $, the claim is simply the error result of \cref{cor:ConvergenceMultilevel}. 
 
 For the induction step $ d \leadsto d+1 $, without loss of generality, we assume that 
 \begin{align}\label{eq:ProofConvergenceAssumptionAlphas}
  \alpha_1^{\frac{1}{\omega_1}} = \cdots = \alpha_P^{\frac{1}{\omega_P}} > \alpha_{P+1}^{\frac{1}{\omega_{P+1}}} \geq \alpha_{P+2}^{\frac{1}{\omega_{P+2}}} \geq \cdots \geq \alpha_d^{\frac{1}{\omega_d}} \geq \alpha_{d+1}^{\frac{1}{\omega_{d+1}}}
 \end{align}
 holds.

 We can rewrite the $d+1$-directional Smolyak operator $ \cA^{(\oomega, \omega_{d+1})}_T $, see \cite{Griebel2013}, as 
 \begin{align*}
  \cA^{(\oomega, \omega_{d+1})}_T = \sum_{\lambda = 0}^{\lambda_{d+1,max}} \cA^{\oomega}_{T - \omega_{d+1} \lambda} \otimes A^{(d+1)}_{\lambda}.
 \end{align*}
 To obtain the stated bound, we use the error representation, see again \cite{Griebel2013},
 \begin{align*}
  \iiota^{(d+1)} - \cA^{(\oomega, \omega_{d+1})}_T &= \sum_{\lambda=0}^{\lambda_{d+1,max}} \left( \iiota^{(d)} - \cA^{\oomega}_{T - \omega_{d+1} \lambda} \right) \otimes A^{(d+1)}_{\lambda} \\
  &\phantom{=}+ \iiota^{(d)} \otimes \sum_{\lambda = \lambda_{d+1,max} + 1}^{\infty} A^{(d+1)}_{\lambda}.
 \end{align*}
 This means that we can estimate for any $ \ff \in H^{(\ssigma,\sigma_{d+1})}_{mix}(\OOmega \times \Omega^{(d+1)}) $,
 \begin{align}\label{eq:ProofConvergence1}
  \phantom{-5em}&\left\| \left(\iiota^{(d+1)} - \cA^{(\oomega, \omega_{d+1})}_T \right) \ff \right\|_{\boldsymbol{\mathcal{F}}(\OOmega \times \Omega^{(d+1)})} \leq \nonumber\\
  & \leq \sum_{\lambda=0}^{\lambda_{d+1,max}} \left\| \left(\left( \iiota^{(d)} - \cA^{\oomega}_{T - \omega_{d+1} \lambda} \right) \otimes A^{(d+1)}_{\lambda} \right) \ff \right\|_{\boldsymbol{\mathcal{F}}(\OOmega \times \Omega^{(d+1)})} + \\
  &\phantom{\leq}+ \left\| \left(\iiota^{(d)} \otimes \sum_{\lambda = \lambda_{d+1,max} + 1}^{\infty} A^{(d+1)}_{\lambda} \right) \ff \right\|_{\boldsymbol{\mathcal{F}}(\OOmega \times \Omega^{(d+1)})}. \nonumber
 \end{align}
 The induction hypothesis and \eqref{eq:ProofConvergenceAssumptionAlphas} allow us to bound 
 \begin{align*}
  &\left\| \left( \left( \iiota^{(d)} - \cA^{\oomega}_{T - \omega_{d+1} \lambda} \right) \otimes \iiota^{(d+1)} \right) \ff \right\|_{\boldsymbol{\mathcal{F}}(\OOmega \times \Omega^{(d+1)})} \leq \\
  &\leq C \max_{1 \leq j \leq d} \alpha_j^{\frac{T - \omega_{d+1} \lambda}{\omega_j}} (T- \omega_{d+1} \lambda)^{\frac{P - 1}{2}} \| \ff \|_{H_{mix}^{(\ssigma, \sigma_{d+1})}(\OOmega \times \Omega^{(d+1)})} \\
  &\leq C \alpha_1^{\frac{T - \omega_{d+1} \lambda}{\omega_1}} T^{\frac{P - 1}{2}} \| \ff \|_{H_{mix}^{(\ssigma, \sigma_{d+1})}(\OOmega \times \Omega^{(d+1)})}.
 \end{align*} 
 This, together with the error result in \cref{cor:ConvergenceMultilevel}, and since all norms are compatible, we can estimate the first term on the right-hand side of \eqref{eq:ProofConvergence1} by
 \begin{align*}
 & \left\| \left(\left( \iiota^{(d)} - \cA^{\oomega}_{T - \omega_{d+1} \lambda} \right) \otimes A^{(d+1)}_{\lambda} \right) \ff \right\|_{\boldsymbol{\mathcal{F}}(\OOmega \times \Omega^{(d+1)})} \leq \\
 &\leq C \alpha_1^{\frac{T - \omega_{d+1} \lambda}{\omega_1}} T^{\frac{P - 1}{2}} \alpha_{d+1}^{\lambda} \| \ff \|_{H_{mix}^{(\ssigma, \sigma_{d+1})}(\OOmega \times \Omega^{(d+1)})}. 
 \end{align*}
 Inserting this back into \eqref{eq:ProofConvergence1} yields
 \begin{align*}
   &\left\| \left(\iiota^{(d+1)} - \cA^{(\oomega, \omega_{d+1})}_T \right) \ff \right\|_{\boldsymbol{\mathcal{F}}(\OOmega \times \Omega^{(d+1)})} \leq \\
   &\leq C \left( \alpha_1^{\frac{T}{\omega_1}} T^{\frac{P-1}{2}} \sum_{\lambda = 0}^{\lambda_{d+1,max}} \alpha_1^{-\lambda \frac{\omega_{d+1}}{\omega_1}} \alpha_{d+1}^{\lambda} +  
+ \sum_{\lambda = \lambda_{d+1,max} + 1}^{\infty} \alpha_{d+1}^{\lambda} \right) \cdot \\
&\hspace{20em} \cdot \| \ff \|_{H_{mix}^{(\ssigma, \sigma_{d+1})}(\OOmega \times \Omega^{(d+1)})} \\
   &\leq C \left( \alpha_1^{\frac{T}{\omega_1}} T^{\frac{P-1}{2}} \sum_{\lambda = 0}^{\lambda_{d+1,max}} \left( \alpha_1^{-\lambda \frac{\omega_{d+1}}{\omega_1}} \alpha_{d+1}^{\lambda} \right) + 
    \alpha_{d+1}^{\frac{T}{\omega_{d+1}}} \right) \cdot\\
    &\hspace{20em} \cdot \| \ff \|_{H_{mix}^{(\ssigma, \sigma_{d+1})}(\OOmega \times \Omega^{(d+1)})} .
 \end{align*}
 Looking at the sum on the right hand side, we see that, if $ \alpha_1^{1/ \omega_1} > \alpha_{d+1}^{1/ \omega_{d+1}} $, the estimate $ \alpha_1^{-\lambda \frac{\omega_{d+1}}{\omega_1}} \alpha_{d+1}^{\lambda} < 1 $ follows. Hence,
 \begin{align*}
  &\left\| \left(\iiota^{(d+1)} - \cA^{(\oomega, \omega_{d+1})}_T \right) \ff \right\|_{\boldsymbol{\mathcal{F}}(\OOmega \times \Omega^{(d+1)})} \\
  &\leq C \left( \alpha_1^{\frac{T}{\omega_1}} T^{\frac{P-1}{2}} + \alpha_{d+1}^{\frac{T}{\omega_{d+1}}} \right) \| \ff \|_{H_{mix}^{(\ssigma, \sigma_{d+1})}(\OOmega \times \Omega^{(d+1)})} \\
  &\leq C \alpha_1^{\frac{T}{\omega_1}} T^{\frac{P-1}{2}} \| \ff \|_{H_{mix}^{(\ssigma, \sigma_{d+1})}(\OOmega \times \Omega^{(d+1)})}.
 \end{align*}
 On the other hand, if $ \alpha_1^{1/ \omega_1} = \alpha_{d+1}^{1/ \omega_{d+1}} $, which means that $ P = d $, we have 
 \begin{align*}
  &\left\| \left(\iiota^{(d+1)} - \cA^{(\oomega, \omega_{d+1})}_T \right) \ff \right\|_{\boldsymbol{\mathcal{F}}(\OOmega \times \Omega^{(d+1)})} \\
  &\leq C \left( \alpha_1^{\frac{T}{\omega_1}} T^{\frac{d-1}{2}}  \left( \frac{T}{\omega_{d+1}} + 1 \right) + \alpha_{d+1}^{\frac{T}{\omega_{d+1}}} \right) \| \ff \|_{H_{mix}^{(\ssigma, \sigma_{d+1})}(\OOmega \times \Omega^{(d+1)})} \\
  &\leq C \alpha_1^{\frac{T}{\omega_1}} T^{\frac{d+1}{2}} \| \ff \|_{H_{mix}^{(\ssigma, \sigma_{d+1})}(\OOmega \times \Omega^{(d+1)})}.
 \end{align*}
 This finishes the proof.
\end{proof}

The next two special cases of the error estimate in \cref{thrm:ErrorEstimateNew} show that  the result derived here recover the convergence rates of \cite{Griebel2013,Griebel2025} and also allows $ L_{\infty} $ error estimates.

\begin{corollary}
With the notation and assumptions of \cref{thrm:ErrorEstimateNew},
\begin{enumerate}
 \item in the case that $ q = 2 $ and $ \mu_j = \frac{1}{2} $, i.e., $ \alpha_j = 2^{- (\sigma_j - \tau_j - \varepsilon_j)} $, $ 1 \leq j \leq d $, we have the error bound 
 \begin{align*}
  \| \left(\iiota - \cA_T^{\oomega} \right) \ff \|_{H^{\ttau}(\OOmega)} \leq C 2^{-T\min\left\{ \frac{\sigma_1 - \tau_1 - \varepsilon_1}{\omega_1},\dots, \frac{\sigma_d - \tau_d - \varepsilon_d}{\omega_d} \right\}} T^{\frac{P-1}{2}} \| \ff \|_{H^{\ssigma}_{mix}(\OOmega)},
 \end{align*}
 where $ P $ counts how often the minimum is attained.
 \item in the case that $ q = \infty $, $ \tau_j = 0 $ and $ \mu_j = \frac{1}{2} $, i.e., $ \alpha_j = 2^{-\left(\sigma_j - \varepsilon_j - \frac{n_j}{2} \right)} $, $ 1 \leq j \leq d $, we have the error bound 
 \begin{align*}
  \| \left(\iiota - \cA_T^{\oomega} \right) \ff \|_{C_b(\OOmega)} \leq C 2^{-T\min\left\{ \frac{\sigma_1 - \varepsilon_1 - \frac{n_1}{2}}{\omega_1},\dots, \frac{\sigma_d - \varepsilon_d - \frac{n_d}{2}}{\omega_d} \right\}} T^{\frac{P-1}{2}} \| \ff \|_{H^{\ssigma}_{mix}(\OOmega)},
 \end{align*}
 where $ P $ counts how often the minimum is attained.
\end{enumerate}
\end{corollary}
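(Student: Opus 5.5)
This corollary is a direct specialisation of \cref{thrm:ErrorEstimateNew}, so the plan is to substitute the given parameters into its bound and simplify. No new analysis is needed. One only has to check that the hypotheses of the theorem hold in each case and then rewrite the rate $\max_j \alpha_j^{1/\omega_j}$ as a power of $2$.

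For item (1), take $q=2$. Then $(1/2-1/q)_+=0$, so \cref{cor:ConvergenceMultilevel} applies in each direction with $\alpha_j=\mu_j^{\sigma_j-\tau_j-\varepsilon_j}=2^{-(\sigma_j-\tau_j-\varepsilon_j)}$. This requires the conditions $\sigma_j-\tau_j>\varepsilon_j$ and $C_{1}^{(j)}2^{-\varepsilon_j}\le 1$, which are assumed by the hypotheses of \cref{thrm:ErrorEstimateNew}. The target space is $\boldsymbol{\mathcal F}(\OOmega)=W^{\ttau,2}_{mix}(\OOmega)=H^{\ttau}_{mix}(\OOmega)$, which is what the notation $H^{\ttau}$ in the statement refers to.

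Since $t\mapsto 2^{-t}$ is strictly decreasing, we have
\begin{align*}
\max_{1\le j\le d}\alpha_j^{1/\omega_j}=\max_{1\le j\le d}2^{-(\sigma_j-\tau_j-\varepsilon_j)/\omega_j}=2^{-\min_{1\le j\le d}(\sigma_j-\tau_j-\varepsilon_j)/\omega_j}.
\end{align*}
The same monotonicity shows that the index $j$ attains the maximum on the left exactly when it attains the minimum on the right. Hence the multiplicity $P$ of the theorem equals the multiplicity of the minimum. Raising to the power $T$ and inserting this into \cref{thrm:ErrorEstimateNew} gives the claimed bound.

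For item (2), take $q=\infty$ and $\tau_j=0$. Then $\ttau=\00\in\N_0^d$ and $\boldsymbol{\mathcal F}(\OOmega)=C^{\00}_{b,mix}(\OOmega)=C_b(\OOmega)$. With $(1/2-1/q)_+=1/2$ we get $\alpha_j=2^{-(\sigma_j-\varepsilon_j-n_j/2)}$. The only point needing care is that \cref{cor:ConvergenceMultilevel} needs $\sigma_j-n_j/2>\varepsilon_j$, which again is part of the theorem's hypothesis, and the embedding into $C_b$ from \cref{cor:TensorProductEmbeddingOperator} holds because $\sigma_j>n_j/2$. The same monotone rewriting of the maximum as a minimum, with the same identification of $P$, then completes the proof.

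There is no genuine obstacle here. The only step to state explicitly is that $P$ is preserved under the monotone transformation $t\mapsto 2^{-t}$.
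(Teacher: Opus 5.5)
Your proposal is correct and matches the paper's argument. The paper gives no separate proof: it obtains the corollary by inserting $q=2$ (respectively $q=\infty$, $\tau_j=0$) and $\mu_j=\tfrac12$ into \cref{thrm:ErrorEstimateNew}, and, as you do, uses that $t\mapsto 2^{-t}$ is strictly decreasing to rewrite $\max_j \alpha_j^{1/\omega_j}$ as $2^{-\min_j(\cdot)/\omega_j}$ with the same multiplicity $P$, reading $H^{\ttau}(\OOmega)$ as $H^{\ttau}_{mix}(\OOmega)=W^{\ttau,2}_{mix}(\OOmega)$.
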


Finally, again in analogy to \cite{Griebel2013,Griebel2025}, we can combine \cref{thrm:ErrorEstimateNew} and \cref{thrm:EstimateDimensionSparseGridSpace} and express the convergence rate in terms of the number of degrees of freedom, i.e., the dimension of the sparse grid space. This yields the cost complexity of the tensor product multilevel approximation of high-dimensional functions.

\begin{theorem}\label{cor:ErrorEstimateInN}
 With the notation and assumptions of \cref{thrm:ErrorEstimateNew}, denote by $ N := \operatorname{dim} \cV^{\oomega}_T $ the number of degrees of freedom and set 
 \begin{align}\label{eq:ErrorEstimateInNConvergenceOrder}
  \beta := \frac{\log\left( \max_{1\leq j \leq d}  \alpha_j^{\frac{1}{\omega_j}} \right)}{\log \left( \min_{1 \leq j \leq d} \mu_j^{\frac{n_j}{\omega_j}}  \right)}.
 \end{align}
 Assume that the maximum in the numerator is attained $ P $ times and the minimum in the denominator is attained $ R $ times. Then the approximation error of the tensor product multilevel method can be bounded by 
 \begin{align}\label{eq:ErrorEstimateInN}
  \| (\iiota - \cA_T^{\oomega}) \ff \|_{\boldsymbol{\cF}(\OOmega)} \leq C N^{-\beta} (\log N)^{\frac{P-1}{2} + \beta (R-1)} \| \ff \|_{H^{\ssigma}_{mix}(\OOmega)},
 \end{align}
 with a constant $ C > 0 $.
\end{theorem}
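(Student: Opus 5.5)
The plan is to combine the two bounds already available: the dimension estimate of \cref{thrm:EstimateDimensionSparseGridSpace} and the error estimate of \cref{thrm:ErrorEstimateNew}. Set $a := \max_j \alpha_j^{1/\omega_j} \in (0,1)$ and $m := \min_j \mu_j^{n_j/\omega_j} \in (0,1)$. The dimension estimate then reads $N \leq C\, m^{-T} T^{R-1}$, and the error estimate reads $E_T \leq C\, a^T T^{(P-1)/2}\|\ff\|_{H^{\ssigma}_{mix}}$. By \eqref{eq:ErrorEstimateInNConvergenceOrder}, $\beta = \log a / \log m$, so $a^T = (m^T)^{\beta} = (m^{-T})^{-\beta}$. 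The whole task is therefore to express $m^{-T}$ and $T$ through $N$.

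First I relate $T$ to $\log N$. Since $\cV^{\oomega}_T$ contains at least the full tensor space of its largest admissible level in one direction (e.g.\ $\llambda = \lambda_{j,max}\ee_j$), we get $N \geq c\, \mu_j^{-n_j \lfloor T/\omega_j\rfloor}$ for a $j$ attaining the minimum in $m$. This uses quasi-uniformity (so $N^{(j)}_\ell \gtrsim \mu_j^{-\ell n_j}$) together with \cref{cor:PropertiesImageSpace}. Hence $N \geq c\, m^{-T}$, which gives $\log N \gtrsim T$ and $T \lesssim \log N$ for $T$ large.

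Next I invert the upper bound. From $N \le C m^{-T} T^{R-1}$ we get $m^{-T} \geq c\, N\, T^{-(R-1)} \geq c\, N (\log N)^{-(R-1)}$, using $T \lesssim \log N$. Raising to the power $-\beta<0$ gives
\begin{align*}
 a^T = (m^{-T})^{-\beta} \leq C\, N^{-\beta} (\log N)^{\beta(R-1)}.
\end{align*}
Inserting this into the error bound and using $T^{(P-1)/2} \lesssim (\log N)^{(P-1)/2}$ yields \eqref{eq:ErrorEstimateInN}.

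The main obstacle is the lower bound $N \gtrsim m^{-T}$, equivalently $T \lesssim \log N$. One issue is that the floor in $\lambda_{j,max}$ only costs a constant factor $m^{-\omega_j}$; this is harmless because the $\omega_j$ are fixed. The other is justifying $\dim W^{(j)}_\ell \sim N^{(j)}_\ell - N^{(j)}_{\ell-1}$ so that the direct sum actually contains $V^{(j)}_{\lambda_{j,max}}$ of dimension $N^{(j)}_{\lambda_{j,max}}$. This follows from \cref{cor:PropertiesImageSpace}(1)--(2), since nested images telescope, together with a quasi-uniform lower bound on point counts, i.e.\ $N_\ell \gtrsim h_\ell^{-n}$ on a bounded domain. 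If that lower bound were unavailable, one could instead keep $T^{\beta(R-1)}$ and only need $T\lesssim \log N$ in a weaker form. The rest is a routine monotonicity argument, and small $T$ is absorbed into the constant.
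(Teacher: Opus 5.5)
Your proof is correct and follows the paper's argument. You invert the dimension bound of \cref{thrm:EstimateDimensionSparseGridSpace} to get $(\max_j \alpha_j^{1/\omega_j})^T \lesssim N^{-\beta} T^{\beta(R-1)}$, insert this into \cref{thrm:ErrorEstimateNew}, and replace $T$ by $\log N$. Your lower bound $N \gtrsim (\min_j \mu_j^{n_j/\omega_j})^{-T}$ (via $\sum_{k\le L} W^{(j)}_k = A^{(j)}_L(C(\Omega^{(j)}))$ and $N^{(j)}_\ell \gtrsim h_\ell^{-n_j}$) is a genuine addition: it justifies the step $T \lesssim \log N$, which the paper only asserts as $T \le \log(N)$.
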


\begin{proof}
 In the case $ R = 1 $, \cref{thrm:EstimateDimensionSparseGridSpace} states that $ N \leq C \left(\max \mu_j^{-\frac{n_j}{\omega_j}} \right)^T $. Hence, we have 
 \begin{align*}
  N^{-\beta} &\geq C \left(\max_{1 \leq j \leq d} \mu_j^{-\frac{n_j}{\omega_j}} \right)^{- \beta T} = C \left(\min_{1 \leq j \leq d} \mu_j^{\frac{n_j}{\omega_j}} \right)^{\frac{\log\left( \max \alpha_j^{\frac{1}{\omega_j}} \right)}{\log \left( \min \mu_j^{\frac{n_j}{\omega_j}}  \right)} T} \\
  &\geq C \left( \max_{1\leq j \leq d}  \alpha_j^{\frac{1}{\omega_j}} \right)^T,
 \end{align*}
 where we used some algebraic manipulations using standard logarithmic identities.
 In view of \cref{thrm:ErrorEstimateNew} and $ T \leq \log(N) $, we obtain the stated estimate for $ R = 1 $.
 
 If $ R > 1 $, \cref{thrm:EstimateDimensionSparseGridSpace} states that $ N \leq C \left(\max \mu_j^{-\frac{n_j}{\omega_j}} \right)^T T^{R-1} $. Inserting this in
 \begin{align*}
  \left(\frac{N}{T^{R-1}} \right)^{-\beta}
 \end{align*}
 and following the same ideas as in the case $ R = 1 $ finishes the proof for $ R > 1 $.
\end{proof}

For special choices of $ q $, $ \mu_j $ and $ \tau_j $ we obtain an easier interpretable result.

\begin{corollary}
 \begin{enumerate}
  \item If $ q = 2 $ and $ \mu_j = \frac{1}{2} $, i.e., $ \alpha_j = 2^{- (\sigma_j - \tau_j - \varepsilon_j)} $, $ 1 \leq j \leq d $, the statement of \cref{cor:ErrorEstimateInN} holds with 
  \begin{align*}
   \beta = \frac{ \min\{ (\sigma_1 - \tau_1 - \varepsilon_1) / \omega_1, \dots, (\sigma_d - \tau_d - \varepsilon_d) / \omega_d \} }{\max\{ n_1 / \omega_1, \dots, n_d / \omega_d \}}.
  \end{align*}
 \item If $ q = \infty $, $ \tau_j = 0 $ and $ \mu_j = \frac{1}{2} $, i.e., $ \alpha_j = 2^{-\left(\sigma_j - \varepsilon_j - \frac{n_j}{2} \right)} $, $ 1 \leq j \leq d $, the statement of \cref{cor:ErrorEstimateInN} holds with 
 \begin{align*}
  \beta = \frac{ \min\{ (\sigma_1 - \varepsilon_1 - \frac{n_1}{2}) / \omega_1, \dots, (\sigma_d - \varepsilon_d - \frac{n_d}{2}) / \omega_d \} }{\max\{ n_1 / \omega_1, \dots, n_d / \omega_d \}}.
 \end{align*}
 \end{enumerate}
\end{corollary}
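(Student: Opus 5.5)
The two items follow directly from \cref{cor:ErrorEstimateInN} once the specific $\alpha_j$ and $\mu_j$ are inserted into the definition \eqref{eq:ErrorEstimateInNConvergenceOrder} of $\beta$. Nothing beyond that theorem is needed.

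For item (1), set $q=2$ and $\mu_j=\tfrac12$, so that $(\tfrac12-\tfrac1q)_+=0$ and $\alpha_j = 2^{-(\sigma_j-\tau_j-\varepsilon_j)}$. The numerator of $\beta$ becomes
\begin{align*}
 \log\Bigl(\max_{j} 2^{-(\sigma_j-\tau_j-\varepsilon_j)/\omega_j}\Bigr) = -\log(2)\,\min_{j}\frac{\sigma_j-\tau_j-\varepsilon_j}{\omega_j},
\end{align*}
because $t\mapsto 2^{-t}$ is strictly decreasing, so the maximum of $2^{-t_j}$ is attained exactly where $t_j$ is minimal. Likewise, the denominator is
\begin{align*}
 \log\Bigl(\min_j 2^{-n_j/\omega_j}\Bigr) = -\log(2)\,\max_j \frac{n_j}{\omega_j}.
\end{align*}
Dividing cancels the factor $-\log 2$ and gives the stated $\beta$.

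For item (2), set $q=\infty$, $\tau_j=0$ and $\mu_j=\tfrac12$. Then $(\tfrac12-\tfrac1q)_+=\tfrac12$ and $\alpha_j=2^{-(\sigma_j-\varepsilon_j-n_j/2)}$. The same monotonicity argument turns the maximum over $\alpha_j^{1/\omega_j}$ into the minimum of $(\sigma_j-\varepsilon_j-n_j/2)/\omega_j$, and the denominator is unchanged from item (1). The space $\boldsymbol{\cF}$ is now $C^{\00}_{b,mix}=C_b$, which is allowed by \cref{thrm:ErrorEstimateNew} because $\ttau=\00\in\N_0^d$.

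The only point needing care, and it is minor, is the multiplicities $P$ and $R$ in the logarithmic factor. Monotonicity shows that the index sets achieving the max and min in \eqref{eq:ErrorEstimateInNConvergenceOrder} coincide with those achieving the min and max in the reformulated expressions. Hence $P$ and $R$ keep their meaning, and the statement of \cref{cor:ErrorEstimateInN}, including the factor $(\log N)^{\frac{P-1}{2}+\beta(R-1)}$, holds verbatim.

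We should also confirm that $\beta>0$. This follows from the standing assumption $\alpha_j<1$, i.e.\ a positive exponent in each direction, and from $n_j/\omega_j>0$.
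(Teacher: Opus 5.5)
Your proposal is correct and follows the paper's route: the paper states this corollary without a separate proof, as a direct specialization of \cref{cor:ErrorEstimateInN}. As you do, it is obtained by inserting $\mu_j=\tfrac12$ and the corresponding $\alpha_j$ into \eqref{eq:ErrorEstimateInNConvergenceOrder} and using the monotonicity of $t\mapsto 2^{-t}$ to turn the maximum and minimum into a minimum and maximum of the exponents. Your remarks that $P$ and $R$ are unchanged and that $\ttau=\00$ is admissible for $q=\infty$ are accurate.
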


In accordance with \cite[Lemma 5.1]{Griebel2013} we can obtain the highest possible convergence rate $ \beta^* $ that can be achieved with the tensor product multilevel method.

\begin{lemma}\label{lem:BestRateBetaStar}
 With the notation and assumptions of \cref{thrm:ErrorEstimateNew} we can bound $ \beta $ from \eqref{eq:ErrorEstimateInNConvergenceOrder} by 
 \begin{align}\label{eq:BetaStar}
  \beta^* := \min_{1 \leq j \leq d} \frac{\sigma_j - \tau_j - \varepsilon_j - n_j \left( \frac{1}{2} - \frac{1}{q} \right)_+}{n_j}.
 \end{align}
\end{lemma}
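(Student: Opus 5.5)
Writing $a_j := -\log \alpha_j = (\sigma_j - \tau_j - \varepsilon_j - n_j(\tfrac12-\tfrac1q)_+)\log(1/\mu_j) > 0$ and $m_j := -\log \mu_j^{n_j} = n_j \log(1/\mu_j) > 0$, the quantity $\beta$ in \eqref{eq:ErrorEstimateInNConvergenceOrder} becomes a ratio of two positive numbers:
\begin{align*}
 \beta = \frac{-\log\max_j \alpha_j^{1/\omega_j}}{-\log \min_j \mu_j^{n_j/\omega_j}} = \frac{\min_{j} a_j/\omega_j}{\max_{j} m_j/\omega_j}.
\end{align*}
Here both logarithms in \eqref{eq:ErrorEstimateInNConvergenceOrder} are negative, since $\alpha_j<1$ and $\mu_j<1$, so flipping signs is harmless. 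The claim $\beta \le \beta^*$ is then an elementary inequality between a minimum of ratios and a maximum of ratios.

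Fix an arbitrary index $k$. The minimum in the numerator is at most its $k$-th entry, and the maximum in the denominator is at least its $k$-th entry. Hence
\begin{align*}
 \beta \le \frac{a_k/\omega_k}{m_k/\omega_k} = \frac{a_k}{m_k} = \frac{\sigma_k - \tau_k - \varepsilon_k - n_k(\frac12-\frac1q)_+}{n_k}.
\end{align*}
The weight $\omega_k$ and the factor $\log(1/\mu_k)$ cancel. This cancellation is the key point: the bound is independent of both the weights and the refinement parameters. Taking the minimum over $k$ gives $\beta \le \beta^*$ from \eqref{eq:BetaStar}.

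There is no genuine obstacle here; the only care needed is bookkeeping. First, every logarithm involved must be negative so that the sign flips and the directions of $\min$ and $\max$ are correct; this follows from $\mu_j\in(0,1)$ and from the standing assumption $\alpha_j<1$, which means the exponent of $\mu_j$ in $\alpha_j$ is positive. Second, the same index $k$ must be used in numerator and denominator so that the $\omega_k$ and $\log(1/\mu_k)$ factors cancel. Optionally, one can remark that $\beta=\beta^*$ is attained when all ratios $a_j/\omega_j$ are equal, e.g.\ $\omega_j \propto a_j$, since then $\min_j a_j/\omega_j = \max_j a_j/\omega_j$. This anticipates the characterisation of optimal weights, but it is not needed for the bound itself.
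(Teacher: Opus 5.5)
Your proof is correct and follows essentially the same route as the paper's. The paper picks the indices attaining the maximum in the numerator and the minimum in the denominator, then bounds $\beta$ by the same ratio taken in a single direction $j$, where $\omega_j$ and $\log\mu_j$ cancel. Your rewriting of $\beta$ as $\min_j (a_j/\omega_j)/\max_j (m_j/\omega_j)$ gives the same bound with the sign bookkeeping made explicit.
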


\begin{proof}
 For fixed weight vector $ \oomega $ let $ k, m \in \{ 1, \dots, d \} $ be such that 
 \begin{align*}
  \mu_k^{\frac{1}{\omega_k}\left(\sigma_k - \tau_k - \varepsilon_k - n_k \left(\frac{1}{2} - \frac{1}{q} \right)_+ \right)} = \max_{1\leq j \leq d} \left( \mu_j^{\frac{1}{\omega_j}\left(\sigma_j - \tau_j - \varepsilon_j - n_j \left( \frac{1}{2} - \frac{1}{q} \right)_+ \right)} \right)  
 \end{align*}
 and 
 \begin{align*}
  \mu_m^{\frac{n_m}{\omega_m}} = \min_{1 \leq j \leq d} \mu_j^{\frac{n_j}{\omega_j}}.
 \end{align*}
 Since both terms are less than one this yields, for any $ 1 \leq j \leq d $,
 \begin{align*}
  \beta &= \frac{\log \left(\mu_k^{\frac{1}{\omega_k}\left(\sigma_k - \tau_k - \varepsilon_k - n_k \left( \frac{1}{2} - \frac{1}{q} \right)_+ \right)} \right)}{\log \left(\mu_m^{\frac{n_m}{\omega_m}} \right)} \\
  &\leq \frac{\log \left(\mu_j^{\frac{1}{\omega_j}\left(\sigma_j - \tau_j - \varepsilon_j - n_j \left( \frac{1}{2} - \frac{1}{q} \right)_+ \right)} \right)}{\log \left(\mu_j^{\frac{n_j}{\omega_j}} \right)} \\
  &= \frac{\sigma_j - \tau_j - \varepsilon_j - n_j \left( \frac{1}{2} - \frac{1}{q} \right)_+}{n_j},
 \end{align*}
 which leads to the stated $ \beta^* $.
\end{proof}

Finally, we can study the range of weights for which we obtain the highest possible convergence rate $ \beta^* $, similar to \cite[Lemma 5.2]{Griebel2013}.

\begin{theorem}\label{thrm:RangeOfOmegaForBestConvergenceRate}
Let $ 1 \leq k \leq d $ be the direction where 
\begin{align*}
\frac{\sigma_k - \tau_k - \varepsilon_k - n_k \left( \frac{1}{2} - \frac{1}{q} \right)_+}{n_k} = \beta^*.
\end{align*}
Then, for all weight vectors $ \oomega > 0 $ such that 
\begin{align}\label{eq:conditionWeights}
\frac{\sigma_k - \tau_k - \varepsilon_k - n_k \left( \frac{1}{2} - \frac{1}{q} \right)_+}{\sigma_i - \tau_i - \varepsilon_i - n_i \left( \frac{1}{2} - \frac{1}{q} \right)_+} \frac{\log(\mu_k)}{\log(\mu_i)} \leq \frac{\omega_k}{\omega_i} \leq \frac{n_k}{n_i} \frac{\log(\mu_k)}{\log(\mu_i)}, \quad 1 \leq i \leq d,
\end{align}
it holds that 
\begin{align*}
\beta = \frac{\log\left( \max_{1\leq j \leq d}  \alpha_j^{\frac{1}{\omega_j}} \right)}{\log \left( \min_{1 \leq j \leq d} \mu_j^{\frac{n_j}{\omega_j}}  \right)} = \beta^*.
\end{align*}
If \eqref{eq:conditionWeights} is not satisfied, then $ \beta < \beta^* $.
\end{theorem}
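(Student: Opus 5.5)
Throughout I abbreviate $ s_j := \sigma_j - \tau_j - \varepsilon_j - n_j\left(\frac12 - \frac1q\right)_+ > 0 $ and $ a_j := -\log(\mu_j) > 0 $. I also set $ x_j := a_j/\omega_j > 0 $, which is the only way the weights enter. The plan is to turn $\beta$ and the conditions \eqref{eq:conditionWeights} into elementary statements about the positive numbers $ x_j s_j $ and $ x_j n_j $, and then argue with min/max inequalities.

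First, since $ \alpha_j^{1/\omega_j} = \exp(-x_j s_j) $ and $ \mu_j^{n_j/\omega_j} = \exp(-x_j n_j) $, and the maximum of $\exp(-\cdot)$ is attained at the minimal argument, \eqref{eq:ErrorEstimateInNConvergenceOrder} becomes
\begin{align*}
 \beta = \frac{\min_{1\le j\le d} x_j s_j}{\max_{1\le j\le d} x_j n_j},
 \qquad
 \beta^* = \frac{s_k}{n_k}.
\end{align*}
Second, since $ \log(\mu_k)/\log(\mu_i) = a_k/a_i $, the two inequalities in \eqref{eq:conditionWeights} are, after multiplying by the positive quantities $ \omega_i/\omega_k $ etc., equivalent to
\begin{align*}
 x_i s_i \ge x_k s_k \quad\text{and}\quad x_i n_i \le x_k n_k, \qquad 1 \le i \le d.
\end{align*}
In words, the index $k$ realises both the minimum of $ x_j s_j $ and the maximum of $ x_j n_j $.

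For the sufficiency direction, assume these inequalities hold. Then the minimum in the numerator equals $ x_k s_k $ and the maximum in the denominator equals $ x_k n_k $. Hence $ \beta = s_k/n_k = \beta^* $.

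For the converse I use the trivial bounds $ \min_j x_j s_j \le x_k s_k $ and $ \max_j x_j n_j \ge x_k n_k $. Together they give $ \beta \le \beta^* $ for every $\oomega$, recovering \cref{lem:BestRateBetaStar}. Now suppose \eqref{eq:conditionWeights} fails for some $i$. If $ x_i s_i < x_k s_k $, the numerator is strictly smaller than $ x_k s_k $ while the denominator is still at least $ x_k n_k $, so $ \beta < \beta^* $. If instead $ x_i n_i > x_k n_k $, the denominator is strictly larger than $ x_k n_k $ while the numerator is at most $ x_k s_k $, so again $ \beta < \beta^* $.

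The only delicate point is the case where several directions attain $\beta^*$, so that $k$ is not unique. The argument above never uses uniqueness, so the characterisation holds for any such $k$. I will also check this independently: $ \beta = \beta^* $ forces $ \min_j x_j s_j = \beta^* \max_j x_j n_j \ge \beta^* x_k n_k = x_k s_k $, so the inequalities hold for every admissible $k$. Finally, I will note that the normalisation $ \omega_{\min} = 1 $ plays no role, because both $\beta$ and \eqref{eq:conditionWeights} are invariant under scaling $\oomega$.
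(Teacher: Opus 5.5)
Your proof is correct and follows the paper's route. Both arguments read \eqref{eq:conditionWeights} as saying that direction $k$ simultaneously maximises $\alpha_j^{1/\omega_j}$ and minimises $\mu_j^{n_j/\omega_j}$, and then read off $\beta=\beta^*$; your explicit strict-inequality argument for the converse is more careful than the paper's, which only asserts $\beta\neq\beta^*$ (with the failing inequalities written in the reversed direction) before invoking \cref{lem:BestRateBetaStar}.
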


\begin{proof}
 The condition \eqref{eq:conditionWeights} yields on one side, since $ \mu_i < 1 $,
 \begin{align*}
  \frac{\sigma_k - \tau_k - \varepsilon_k - n_k \left( \frac{1}{2} - \frac{1}{q} \right)_+}{\omega_k}\log(\mu_k) \geq \frac{\sigma_i - \tau_i - \varepsilon_i - n_i \left( \frac{1}{2} - \frac{1}{q} \right)_+}{\omega_i} \log(\mu_i),
 \end{align*}
 which means that, using the definition of $ \alpha_i $ as in \cref{cor:ConvergenceMultilevel},
 \begin{align*}
  \log\left(\alpha_k^{\frac{1}{\omega_k}} \right) \geq \log \left(\alpha_i^{\frac{1}{\omega_i}} \right),
 \end{align*}
 and by monotony of the logarithm, keeping in mind that $ \alpha_j < 1 $,
 \begin{align*}
  \alpha_k^{\frac{1}{\omega_k}} \geq \alpha_i^{\frac{1}{\omega_i}}.
 \end{align*}
 Similarly, on the other side, we obtain
 \begin{align*}
   \mu_k^{\frac{n_k}{\omega_k}}\leq \mu_i^{\frac{n_i}{\omega_i}}.
 \end{align*}
 Together, we have 
 \begin{align*}
  \beta &= \frac{\log\left( \max_{1\leq j \leq d}  \alpha_j^{\frac{1}{\omega_j}} \right)}{\log \left( \min_{1 \leq j \leq d} \mu_j^{\frac{n_j}{\omega_j}}  \right)} 
  = \frac{\log\left(\alpha_k^{\frac{1}{\omega_k}} \right)}{\log \left( \mu_k^{\frac{n_k}{\omega_k}}  \right)} = \beta^*.
 \end{align*}
If \eqref{eq:conditionWeights} is not satisfied for all $ 1 \leq i \leq d $, it means that 
\begin{align*}
 \alpha_k^{\frac{1}{\omega_k}} > \alpha_i^{\frac{1}{\omega_i}} \quad \text{or} \quad \mu_k^{\frac{n_k}{\omega_k}} < \mu_i^{\frac{n_i}{\omega_i}}
\end{align*}
and therefore $ \beta \neq \beta^* $. In view of \cref{lem:BestRateBetaStar}, this means that $ \beta < \beta^* $.
\end{proof}

We obtain the following corollary for special choices of $ q $, $ \mu_i $ and $ \tau_i $.

\begin{corollary}
  \begin{enumerate}
  \item If $ q = 2 $ and $ \mu_j = \frac{1}{2} $, i.e., $ \alpha_j = 2^{- (\sigma_j - \tau_j - \varepsilon_j)} $, $ 1 \leq j \leq d $, condition \eqref{eq:conditionWeights} becomes
  \begin{align*}
  \frac{\sigma_k - \tau_k - \varepsilon_k}{\sigma_i - \tau_i - \varepsilon_i} \leq \frac{\omega_k}{\omega_i} \leq \frac{n_k}{n_i}
  \end{align*}
  and the statement of \cref{thrm:RangeOfOmegaForBestConvergenceRate} holds.
 \item If $ q = \infty $, $ \tau_j = 0 $ and $ \mu_j = \frac{1}{2} $, i.e., $ \alpha_j = 2^{-\left(\sigma_j - \varepsilon_j - \frac{n_j}{2} \right)} $, $ 1 \leq j \leq d $, condition \eqref{eq:conditionWeights} becomes
  \begin{align*}
  \frac{\sigma_k - \tau_k - \varepsilon_k - \frac{n_k}{2}}{\sigma_i - \tau_i - \varepsilon_i - \frac{n_i}{2}} \leq \frac{\omega_k}{\omega_i} \leq \frac{n_k}{n_i}
  \end{align*}
  and the statement of \cref{thrm:RangeOfOmegaForBestConvergenceRate} holds.
 \end{enumerate}
\end{corollary}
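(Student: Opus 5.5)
This corollary is a direct specialisation of \cref{thrm:RangeOfOmegaForBestConvergenceRate}. I will substitute the given values of $q$, $\mu_j$ and $\tau_j$ into condition \eqref{eq:conditionWeights} and simplify. No new analytic argument is needed: once the condition has been rewritten, the conclusion $\beta = \beta^*$ (and $\beta < \beta^*$ otherwise) carries over verbatim from \cref{thrm:RangeOfOmegaForBestConvergenceRate}.

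First, with $\mu_j = \frac12$ for all $j$, the factor $\log(\mu_k)/\log(\mu_i) = \log(1/2)/\log(1/2) = 1$ disappears from both the left and right bounds in \eqref{eq:conditionWeights}.

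For case (1), $q = 2$ gives $(\frac12 - \frac1q)_+ = 0$. The numerators and denominators on the left then reduce to $\sigma_j - \tau_j - \varepsilon_j$, which yields the displayed two-sided inequality. As a consistency check, $\alpha_j = 2^{-(\sigma_j-\tau_j-\varepsilon_j)}$ agrees with \cref{cor:ConvergenceMultilevel}.

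For case (2), $q = \infty$ gives $(\frac12 - \frac1q)_+ = \frac12$, so the subtracted term is $n_j/2$. Setting $\tau_j = 0$ (which lies in $\N_0$, as \cref{thrm:ErrorEstimateNew} requires for $q=\infty$) produces the stated inequality; the $\tau_k,\tau_i$ appearing in the display are simply zero. Again the formula for $\alpha_j$ agrees with \cref{cor:ConvergenceMultilevel}.

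The only point needing care is confirming that the hypotheses of \cref{thrm:RangeOfOmegaForBestConvergenceRate} remain in force. The index $k$ must still be the direction attaining $\beta^*$ from \cref{lem:BestRateBetaStar}, now computed with the specialised exponents. Convergence also requires $\alpha_j<1$, i.e. $\sigma_j - \tau_j - \varepsilon_j - n_j(\frac12-\frac1q)_+ > 0$, which is part of the standing assumptions of \cref{cor:ConvergenceMultilevel}. Given these, the result follows immediately.
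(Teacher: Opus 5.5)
Your proposal is correct and matches the paper, which gives no separate proof and treats the corollary as the immediate specialisation of \cref{thrm:RangeOfOmegaForBestConvergenceRate}. Substituting $\mu_j = \frac{1}{2}$ makes $\log(\mu_k)/\log(\mu_i) = 1$, and $(\frac{1}{2} - \frac{1}{q})_+$ equals $0$ for $q=2$ and $\frac{1}{2}$ for $q=\infty$ (with $\tau_j=0$); the standing assumption $\alpha_j<1$ from \cref{cor:ConvergenceMultilevel} keeps the theorem's hypotheses in force, exactly as you note.
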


\subsection{Selection of Optimal Weights}\label{subsec:SelectionOfOptimalWeights}

With the results on the degrees of freedom in \cref{thrm:EstimateDimensionSparseGridSpace} and the convergence result of the low-dimensional approximations \cref{cor:ConvergenceMultilevel}, we are able to revisit the selection of optimal weights for the Smolyak method, cf. the discussion at the end of \Cref{subsec:SmolyakApproximation}. For ease of discussion, we assume that $ \lfloor T /\omega_j \rfloor = T /\omega_j $.

 \textbf{Equilibrating the accuracy.}
Equilibrating the accuracy in the extremal spaces $ V_{T/{\omega_j}}^{(j)} $, $ 1 \leq j \leq d $, yields the condition 
\begin{align*}
 \alpha_1^{\frac{T}{\omega_1}} = \alpha_2^{\frac{T}{\omega_2}} = \cdots = \alpha_d^{\frac{T}{\omega_d}},
\end{align*}
with $ \alpha_j = \mu_j^{\sigma_j - \tau_j - \varepsilon_j - n_j \left( \frac{1}{2} - \frac{1}{q} \right)_+} $. The choice 
\begin{align*}
 \omega_j = \left( \sigma_j - \tau_j - \varepsilon_j - n_j \left( \frac{1}{2} - \frac{1}{q} \right)_+ \right) \frac{\log \mu_j}{\log \mu},
\end{align*}
leads to an equilibrated convergence rate $ \mu^{T} $ with a freely choosable $ \mu \in (0,1) $. Then, we rescale $ \oomega $ such that $ \min \omega_j = 1 $ to satisfy the condition on the weight vector in \Cref{subsec:SmolyakApproximation}.

\textbf{Equilibrating the degrees of freedom.} Equilibrating the dimensionality of the extremal spaces $ V^{(j)}_{T/\omega_j}(C(\Omega^{(j)})) $, $ 1 \leq j \leq d $, yields the condition
\begin{align*}
 \mu_1^{-\frac{n_1}{\omega_1}T} = \mu_2^{-\frac{n_2}{\omega_2}T} = \cdots = \mu_d^{-\frac{n_d}{\omega_d}T} = \mu^{-T},
\end{align*}
where we omit the constant. This is satisfied if $ \omega_j = n_j \frac{\log \mu_j}{\log \mu} $ and leads to equilibrated degrees of freedom $ \mu^{-T} $ with a free $ \mu \in (0,1) $. Rescaling the resulting vector $\oomega $ to satisfy $ \min \omega_j = 1 $ then satisfies the condition on the weight vector in \Cref{subsec:SmolyakApproximation}.

\textbf{Equilibrating the cost-benefit rate.} Equilibrating the cost-benefit ratio leads to the condition 
\begin{align*}
\prod_{j=1}^{d} \mu_j^{- \frac{T}{\omega_j} \left( \sigma_j - \tau_j - \varepsilon_j - n_j \left( \frac{1}{2} - \frac{1}{q} \right)_+ + n_j \right)} = \mu^{-T},
\end{align*}
 with a freely choosable $ \mu \in (0,1) $.  This leads to 
 \begin{align*}
 \omega_j = d \left( \sigma_j - \tau_j - \varepsilon_j - n_j \left( \frac{1}{2} - \frac{1}{q} \right)_+ + n_j \right) \frac{\log \mu_j}{\log \mu}.
 \end{align*}
 Normalising $ \oomega $ to satisfy $ \min \omega_j = 1 $ yields the choice of the weight vector in this case.

\section{Implementation and Algorithm}\label{sec:Implementation}

The computation of the approximation to a high-dimensional function $ \ff \in C(\OOmega) $ using the operator of \cref{def:TPMLOperator} can be split into an offline and an online phase. We assume that the index set $ \cI_{T}^{\oomega} $ is already generated and the nested sequences $ (X^{(j)}_i) $, $ 1 \leq j \leq d $, $ 1 \leq i \leq \lambda_{j,max} $, are given or already generated, e.g., using the thinning algorithm proposed in \cite{Griebel2025}.

\subsection{The Offline Phase}

In the offline phase, we can precompute the functions $ b_{L,i} $ of \eqref{eq:NodalRepresentationMultilevelOperator}. Since this can be done for all directions $ 1 \leq j \leq d $ separately, we drop the direction dependence. The following discussion has essentially been done in \cite{Gollwitzer2026}. We repeat it here for convenience of the reader.

The functions $ b_{L,i} $ can be expressed as a linear combination of the scaled RBFs $ \Phi_{\ell} $, $ 1 \leq \ell \leq L $,
\begin{align*}
 b_{L,i} = \sum_{\ell=1}^{L} \sum_{k_{\ell} = 1}^{N_{\ell}} \alpha^{(i)}_{\ell,k_{\ell}} \Phi_{\ell}(\cdot - \xx_{\ell, k_{\ell}}),
\end{align*}
where the coefficients $ \aalpha^{(i)}_{\ell} $ are solutions of the uniquely solvable block linear system 
\begin{align}\label{eq:BlockLinearSystem}
 \begin{pmatrix}
      A_1 \\
    B_{21} & A_2    &        &            & \\
    B_{31} & B_{32} & A_3    &            & \\
    \vdots & \vdots & \cdots & \ddots     & \\
    B_{L1} & B_{L2} & \cdots & B_{L(L-1)} & A_L 
   \end{pmatrix}
   \begin{pmatrix}
    \aalpha^{(i)}_1 \\
    \aalpha^{(i)}_2 \\
    \aalpha^{(i)}_3 \\
    \vdots        \\
    \aalpha^{(i)}_L
   \end{pmatrix}
   = \begin{pmatrix}
      \chi_{L,i}|_{X_1} \\
      \chi_{L,i}|_{X_2} \\
      \chi_{L,i}|_{X_3} \\
      \vdots \\
      \chi_{L,i}|_{X_L}
 \end{pmatrix},
\end{align}
where $ A_{\ell} = ( \Phi_{\ell}(\xx_{\ell,i} - \xx_{\ell,k})) 
\in \R^{N_{\ell} \times N_{\ell}} $ and $ B_{\ell m} = 
(\Phi_m(\xx_{\ell,i} - \xx_{m,k} )) \in \R^{N_{\ell} 
\times N_m} $ and $ \chi_{L,i} \in W_L $ is the Lagrange function on level $ L $, i.e., the unique function in $ W_L $ that satisfies $ \chi_{L,i}(\xx_{L,k}) = \delta_{i,k} $ for $ \xx_{L,k} \in X_L $.

We have to solve the linear system \eqref{eq:BlockLinearSystem} for all $ 1 \leq i \leq N_L $. However, it turns out that this can be done efficiently. If $ i $ is such that $ \xx_{L,i} \in X_L \setminus X_{L-1} $, i.e., this point appears in the sequence $ (X_k) $ only in the last level, we see that $ \chi_{L,i}|_{X_{\ell}} = \00 \in \R^{N_{\ell}} $ for all $ 1 \leq \ell \leq L-1 $ and we only have to solve $ A_L \aalpha_L^{(i)} = \ee_i $ to compute the coefficients for this $ b_{L,i} $. If $ i $ is such that $ \xx_{L,i} \in X_{L-1} \setminus X_{L-2} $, we only have to solve the sub-system 
\begin{align*}
\begin{pmatrix}
A_{L-1}     & 0\\
B_{L(L-1)} & A_L 
\end{pmatrix}
\begin{pmatrix}
\aalpha^{(i)}_{L-1} \\
\aalpha^{(i)}_L
\end{pmatrix}
= \begin{pmatrix}
\chi_{L,i}|_{X_{L-1}} \\
\chi_{L,i}|_{X_L}
\end{pmatrix}.
\end{align*}
It holds again that $ \chi_{L,i}|_{X_L} = \ee_i $, and $ \chi_{L,i}|_{X_{L-1}} = \ee_k \in \R^{N_{L-1}} $, where $ k $ is the index of the position of $ \xx_{L,i} $ in $ X_{L-1} $. This can then be iterated for the remaining points. Only for those $ \xx_i $ with $ \xx_i \in X_1 $, we have to solve the whole system \eqref{eq:BlockLinearSystem}.

The computation of $ b_{L,i} $ is independent of the computation of $ b_{L,k} $ for $ i \neq k $. This means that the computation of all $ (b_{L,i}) $, $ 1 \leq L \leq \lambda_{max} $, $ 1 \leq i \leq N_L $, is highly parallelizable. The procedure is described in \cref{alg:TPMLOfflinePhase}.

\begin{algorithm}
 \caption{Offline Phase \label{alg:TPMLOfflinePhase}}

 \begin{algorithmic}[1]
  \Require Sequence of data sites $ (X_i)$
  \Ensure Coefficient vectors $ \aalpha^{(i)}_{\ell} $
  \For{$ L = 1, \dots, \lambda_{max} $}
		\For{$ i = 1, \dots, N_{L} $}
			\State Solve linear system \eqref{eq:BlockLinearSystem} for $ \aalpha^{(i)}_{\ell} $, $ 1 \leq \ell \leq L $ \;
		\EndFor
	\EndFor
	
 \end{algorithmic}
\end{algorithm}

With the arguments above it is easy to see the following statements, cf. \cite{Gollwitzer2026}.

\begin{proposition}\label{prop:ComputationalCost}
Assuming that each block of the linear system can be solved in linear time, computation of all necessary coefficients takes 
\begin{align*}
\cO \left(\sum_{\ell = 1}^{\lambda_{max}} N_{\ell} \log N_{\ell} \right) 
\end{align*}
time.

Storing all necessary coefficients in memory requires
\begin{align*}
\cO \left(\sum_{L=1}^{\lambda_{max}} \left(\sum_{\ell = 1}^{L} (N_{\ell} - N_{\ell-1}) \sum_{k = \ell}^{L} N_k \right) \right). 
\end{align*}
\end{proposition}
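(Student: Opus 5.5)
The plan is a direct accounting over the nested structure $X_1 \subseteq X_2 \subseteq \cdots \subseteq X_{\lambda_{max}}$, using the block lower-triangular form of \eqref{eq:BlockLinearSystem} and the observation preceding \cref{alg:TPMLOfflinePhase}: if $\xx_{L,i}$ first appears in $X_\ell$, i.e. $\xx_{L,i} \in X_\ell \setminus X_{\ell-1}$, then $\chi_{L,i}|_{X_m} = \00$ for $m < \ell$. Hence $\aalpha^{(i)}_m = \00$ for $m<\ell$, and only the trailing subsystem with diagonal blocks $A_\ell, \dots, A_L$ has to be solved. This is done by block forward substitution: on each level $m = \ell, \dots, L$ we form the right-hand side $\chi_{L,i}|_{X_m} - \sum_{p<m} B_{mp}\aalpha^{(i)}_p$ and then solve with $A_m$.

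For the time bound, I would first fix the cost model. By assumption each diagonal solve with $A_m$ costs $\cO(N_m)$. The off-diagonal products $B_{mp}\aalpha^{(i)}_p$ are also treated as cheap: the kernels are compactly supported (Wendland) with $\delta_m = \nu h_m$, and the point sets are quasi-uniform, so each row of $B_{mp}$ has only boundedly many relevant entries once neighbours are located with a tree search. That search is the source of the $\log N$ factor. With this model, one would try to charge each level's work, summed over all $L$ and $i$, to $\cO(N_\ell \log N_\ell)$. I expect this step to be the main obstacle. A naive count gives $\sum_L \sum_i \sum_{m} N_m$, which is much larger than the claimed bound. Matching the statement requires reusing work: the vectors $\chi_{L,i}|_{X_m}$ are unit vectors $\ee_k$, and the computations for different $L$ share factorizations and neighbour structures. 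I would argue that the factorization or neighbour search of each level $\ell$ is done once, at cost $N_\ell \log N_\ell$, and that the remaining per-right-hand-side work is absorbed by the assumption that each block is solved in linear time. This is essentially the argument of \cite{Gollwitzer2026}, and I would follow it and cite it rather than re-derive it.

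For the memory bound, the count is exact combinatorics. Fix $L$. For each $\ell \le L$ there are $N_\ell - N_{\ell-1}$ indices $i$ whose point first enters at level $\ell$, with the convention $N_0 = 0$. For each such $i$, the nonzero coefficient blocks are $\aalpha^{(i)}_k$ for $k = \ell, \dots, L$, of total length $\sum_{k=\ell}^L N_k$. Summing over $\ell$ and then over $L = 1,\dots,\lambda_{max}$ gives exactly the stated expression.
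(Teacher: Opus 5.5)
\paragraph{Memory bound.} Your memory count is correct. It is exactly what the reduction described before \cref{alg:TPMLOfflinePhase} gives, and the paper itself offers nothing beyond that discussion and the pointer to \cite{Gollwitzer2026}.

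\paragraph{Time bound: the gap.} Your remark that the naive count is much larger is the decisive observation, not an obstacle to be argued away. The step in which the per-right-hand-side work is ``absorbed by the assumption that each block is solved in linear time'' fails. Under that very assumption, a point first entering at level $\ell$ costs one solve with each of $A_\ell,\dots,A_L$, i.e.\ order $\sum_{k=\ell}^{L}N_k$. Summing over $i$ and $L$ reproduces the memory expression, not $\sum_\ell N_\ell\log N_\ell$.

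Sharing factorizations or neighbour structures cannot beat this, because it is the size of the output. The coefficient vectors are generically dense: for $\xx_{L,i}\in X_L\setminus X_{L-1}$, $\aalpha^{(i)}_L=A_L^{-1}\ee_i$ is a column of the inverse of a sparse positive definite matrix. Already the blocks $\aalpha^{(i)}_L$, $1\le i\le N_L$, comprise $N_L^2$ entries. Read as total sequential work, the stated time bound cannot hold, and deferring to \cite{Gollwitzer2026} does not close the step.

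\paragraph{What you can prove instead.} The surrounding discussion stresses that the $b_{L,i}$ are computed independently. The reading it supports, and the one consistent with the memory bound, is the time bound as the cost of a single coefficient system, equivalently the running time when all systems are solved in parallel. You should state that reading explicitly and prove it as follows.
\begin{enumerate}
\item The worst case is $\xx_{L,i}\in X_1$ with $L=\lambda_{max}$.
\item In block forward substitution, level $m$ needs one linear-time solve with $A_m$ and the $m-1$ products $B_{mp}\aalpha^{(i)}_p$, $p<m$.
\item Each row of $B_{mp}$ has $\cO(1)$ nonzero entries, because the support radius $\nu h_p$ of $\Phi_p$ contains boundedly many points of the quasi-uniform set $X_p$. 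Hence level $m$ costs $\cO(mN_m)$.
\item Quasi-uniformity and $h_m\le\mu^{m-1}h_1$ give $N_m\gtrsim\mu^{-n(m-1)}$, hence $m\lesssim\log N_m$.
\item Summing over $m\le\lambda_{max}$ yields $\cO\bigl(\sum_{\ell}N_\ell\log N_\ell\bigr)$.
\end{enumerate}
The total sequential work, by contrast, is of the order of the memory bound.
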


\subsection{The Online Phase}

With all coefficients computed, we only have to evaluate the functions $ b_{L,i} $ and combine the values as in \cref{def:TPMLOperator}. This can be done as described in \cref{alg:TPMLOnlinePhase}. Again, this is highly parallelizable.

\begin{algorithm}
 \caption{Online Phase \label{alg:TPMLOnlinePhase}}
 
 \begin{algorithmic}[1]
  \Require Target function $ \ff $, evaluation point $ \xx \in \OOmega $
  \Ensure Approximate value $ \cA_{T}^{\oomega} (\ff)(\xx) $
  \State Compute $ c_{\llambda} = \sum_{\substack{\bbeta \in \{0,1\}^d \\ \llambda + 		\bbeta \in \cI^{\oomega}_T}} (-1)^{\|\bbeta\|_1} $
  \For{$ j = 1, \dots, d $}
	\State Compute the values $ b_{\lambda_j,i_j}^{(j)} (\xx^{(j)}) $ for all $ i_j = 1, \dots, N_{\lambda_j}^{(j)} $
  \EndFor
 \end{algorithmic}
\end{algorithm}

It is also easy to obtain an estimate on the online costs of the method.

\begin{proposition}\label{prop:OnlineCost}
 A single evaluation of $ \cA^{\oomega}_T(\ff) $ takes 
 \begin{align*}
  \cO \left(\# \cJ^{\oomega}_T \prod_{j=1}^{d} N_{\lambda_j} \log N_{\lambda_j} \right)  
 \end{align*}
 time, assuming that evaluation of $ b_{\lambda_j, i_j} $ takes logarithmic time and $ c_{\llambda} $ can be computed in constant time.
\end{proposition}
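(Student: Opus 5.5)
We count the work in \cref{alg:TPMLOnlinePhase} directly from the combination-technique form \eqref{eq:TPMLOperator} of \cref{def:TPMLOperator}. The outer sum runs over $\llambda \in \cJ^{\oomega}_T$, which contributes the factor $\#\cJ^{\oomega}_T$. For each such $\llambda$, the scalar weight $c_{\llambda} = \sum_{\bbeta} (-1)^{\|\bbeta\|_1}$ collapses the inner $\bbeta$-sum. It costs $\cO(1)$ by assumption, since it is independent of $\xx$ and can be precomputed. The remaining work per $\llambda$ is the $d$-fold sum over $i_1,\dots,i_d$ of $f(\xx^{(1)}_{\lambda_1,i_1},\dots,\xx^{(d)}_{\lambda_d,i_d})\prod_j b^{(j)}_{\lambda_j,i_j}(\xx^{(j)})$.

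The key steps are as follows.
\begin{enumerate}
\item For fixed $\llambda$ and $j$, evaluate all values $b^{(j)}_{\lambda_j,i_j}(\xx^{(j)})$ for $1\le i_j\le N^{(j)}_{\lambda_j}$. By hypothesis each evaluation takes $\cO(\log N_{\lambda_j})$, so this costs $\cO(N_{\lambda_j}\log N_{\lambda_j})$ in total. Storing these values in $d$ vectors costs $\sum_j N_{\lambda_j}\log N_{\lambda_j} \le \prod_j N_{\lambda_j}\log N_{\lambda_j}$, up to a constant; if some $N_{\lambda_j}\log N_{\lambda_j}<1$, we replace the factor by $\max\{1,\cdot\}$.
\item Form the tensor contraction over all multi-indices $(i_1,\dots,i_d)$. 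This requires $\prod_j N_{\lambda_j}$ terms, each consisting of one function value (taken from the data as given) and a product of $d$ precomputed numbers. The cost is $\cO(d\prod_j N_{\lambda_j})$, which is dominated by $\prod_j N_{\lambda_j}\log N_{\lambda_j}$ once $d$ is absorbed into the constant.
\item Multiply the result by $c_{\llambda}$ and accumulate it. Summing the per-$\llambda$ bound $\cO(\prod_j N_{\lambda_j}\log N_{\lambda_j})$ over $\cJ^{\oomega}_T$ and bounding each summand by the maximal one gives the stated $\cO\bigl(\#\cJ^{\oomega}_T\prod_j N_{\lambda_j}\log N_{\lambda_j}\bigr)$. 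The stated formula should be read with $\lambda_j$ ranging over the largest levels occurring in $\cJ^{\oomega}_T$, or equivalently as a maximum over $\llambda$.
\end{enumerate}

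There is no real obstacle here. The only point needing care is the bookkeeping that the per-direction evaluation cost, which is additive over $j$, is dominated by the multiplicative contraction cost. This requires the convention $N_{\lambda_j}\ge 2$, so that each factor is at least one, and treating $d$ as a constant.
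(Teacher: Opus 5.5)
Your proposal is correct and matches the paper's approach. The paper gives no written proof beyond remarking that the estimate is easy, and the argument it implies is your direct operation count over $\llambda \in \cJ^{\oomega}_T$ in \eqref{eq:TPMLOperator}: $c_{\llambda}$ absorbs the $\bbeta$-sum, and for each $\llambda$ you bound the cost of the direction-wise $b$-evaluations and of the $\prod_j N_{\lambda_j}$-term contraction. Your reading of the free index $\lambda_j$ as a maximum over $\cJ^{\oomega}_T$ is the right way to make the statement precise. In fact your count gives the sharper per-$\llambda$ cost $\cO\bigl(\prod_j N_{\lambda_j} + \sum_j N_{\lambda_j}\log N_{\lambda_j}\bigr)$; note that the second term is computing time, not storage as you wrote.
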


\begin{remark}
\begin{enumerate}
 \item The assumptions of \cref{prop:ComputationalCost,prop:OnlineCost} are satisfied for compactly supported kernels $ \Phi $. If $ \Phi $ is non-local, we can use compression methods such as $ \cH $-matrix techniques, see, e.g., \cite{Boerm2010} or Samplets \cite{Harbrecht2022}, that may reduce the computational costs by not solving the block linear system \eqref{eq:BlockLinearSystem} $ N_L $ times but computing and storing the inverse of the system once.
 \item Further results about increasing the numerical efficiency can be found in \cite{Lot2026}.
\end{enumerate}
\end{remark}

\section{Numerical Experiments}\label{sec:NumericalExperiments}

We now present numerical experiments for the proposed method.

In high dimensions, the direct evaluation of global error norms is computationally prohibitive. In particular, accurately approximating an $ L_p(\OOmega) $-error requires prohibitively fine discretizations and high-order quadrature rules for $ p \neq \infty $. Hence, for all experiments, we only report $ \ell_{\infty} $-errors. For the experiments in \Cref{subsec:StandardSparseGrid} and \Cref{subsec:Genz}, we estimate the supremum error by random sampling. We choose a number of random samples in every direction, such that the full $d$-fold tensor product set consists of at least 125\,000 points, and compute the maximum pointwise error over the full tensor product. This procedure is repeated at least 4 times with independent samples, and the reported error is the maximum over all repetitions.
While this approach does not yield the true $L_{\infty} $-error, it provides a reliable empirical indicator of the approximation quality and is standard in high-dimensional numerical experiments. The reported errors in \Cref{subsec:Anisotropic} are computed on a custom set of error sites, described in more detail there.

\subsection{A Standard Sparse Grid Example}\label{subsec:StandardSparseGrid}

We investigate the computational cost of the TPML method for different
numbers of directions~$d$. To this end, we consider a fully isotropic
setting with weights $\oomega = \11$ and domains $\Omega^{(j)} = [0,1]$
for $1 \leq j \leq d$. Each domain is discretized by a uniform grid with
refinement parameter $\mu = 1/2$. The target function is chosen as
$\ff \equiv 1$ in order to eliminate the cost of function evaluations.
We fix the kernel $\Phi$ to be the $C^2$-Wendland kernel
\[
\phi_{1,1}(r) = (1-r)^3_+(3r+1),
\]
which is the reproducing kernel of $H^{2}(\R)$, and scale it such that
its support contains approximately ten grid points on each level.
All experiments were performed on a single thread of an Intel Xeon
E5-2680 CPU at 2.40\,GHz.

\cref{fig:CummulativeRuntime} shows the total runtime for a single point evaluation, the sum of
offline and online phases, as a function of the threshold $T$, for
$d = 3, 6, 9, 12$. The semilogarithmic scale reveals that the runtime grows roughly
exponentially in $T$ for all dimensions, consistent with the exponential
growth of $N = \dim \cV^{\oomega}_T$ in $T$ established in
\cref{thrm:EstimateDimensionSparseGridSpace}. The rate of growth
increases markedly with $d$: at fixed $T$, adding directions multiplies
the number of component grids, enlarging both the offline linear systems
and the online summation in \eqref{eq:TPMLOperator}.

\begin{figure}
\begin{tikzpicture}
\begin{semilogyaxis}[
    xlabel = {Threshold $T$},
    ylabel = {Total runtime (s)},
    xmin = -0.4,  xmax = 9.4,
    ymin = 3e-4,  ymax = 5e4,
    xtick = {0,1,2,3,4,5,6,7,8,9},
    minor ytick = {},
    legend style = {at={(1.05,1)}, anchor=north west},
grid = both
]

\addplot[thick, color=red,  mark=square*,mark options=solid]
  coordinates {(0,6.15e-4)(1,1.19e-3)(2,3.01e-3)(3,4.83e-3)(4,2.07e-2)
               (5,1.07e-1)(6,6.60e-1)(7,3.654)(8,19.952)(9,100.068)};
\addlegendentry{$d = 3$}

\addplot[thick, color=blue,mark=triangle*,mark options=solid]
  coordinates {(0,1.40e-3)(1,2.92e-3)(2,9.81e-3)(3,3.32e-2)(4,9.45e-2)
               (5,3.75e-1)(6,2.034)(7,9.215)(8,44.615)(9,213.356)};
\addlegendentry{$d = 6$}

\addplot[thick, color=brown,mark=*,mark options=solid]
  coordinates {(0,4.92e-3)(1,2.47e-2)(2,1.19e-1)(3,5.97e-1)(4,2.605)
               (5,10.393)(6,39.253)(7,141.522)(8,497.103)(9,1714.365)};
\addlegendentry{$d = 9$}

\addplot[thick, color=green, mark=diamond*,mark options=solid]
  coordinates {(0,2.20e-2)(1,2.31e-1)(2,1.966)(3,12.610)(4,68.054)
               (5,326.958)(6,1445.58)(7,6062.42)(8,23723.7)};
\addlegendentry{$d = 12$}

\end{semilogyaxis}
\end{tikzpicture}
\caption{Total runtime for the standard sparse grid on $ [0,1]^d $ for $ d = 3,6,9,12 $ for a single point evaluation of $ \cA^{\11}_T(1) $ as a function of $ T $.}
\label{fig:CummulativeRuntime}
\end{figure}

\cref{tab:Runtimes} separates the total runtime into its offline
($t_{\mathrm{set}}$) and online ($t_{\mathrm{ev}}$) contributions.
For $d = 3$, the offline phase dominates at every threshold, the bold
entries always fall on $t_{\mathrm{set}}$, since the one-dimensional
linear systems grow with $T$ while a single evaluation of a 3-directional
combination formula remains cheap.
For $d = 6$, the two phases are nearly balanced throughout.
For $d = 9$ and $d = 12$, the online phase takes over from $T = 2$
onwards and eventually exceeds the offline phase by more than an order of
magnitude: at $T = 8$, $d = 12$, the evaluation costs nearly seventeen
times the setup.

\begin{sidewaystable}
\vspace{8cm}
\centering\small
\setlength{\tabcolsep}{5pt}
\caption{Runtimes (in seconds) for the offline ($t_{\mathrm{set}}$) and
online ($t_{\mathrm{ev}}$) phases of the TPML method on $[0,1]^d$
with isotropic weights $\oomega = \11$ and target function
$\ff \equiv 1$.
Bold entries indicate the dominant phase at each threshold.}
\label{tab:Runtimes}
\begin{tabular}{r
    rr
    rr
    rr
    rr
}
\toprule
& \multicolumn{2}{c}{$d = 3$}
& \multicolumn{2}{c}{$d = 6$}
& \multicolumn{2}{c}{$d = 9$}
& \multicolumn{2}{c}{$d = 12$} \\
\cmidrule(lr){2-3}\cmidrule(lr){4-5}\cmidrule(lr){6-7}\cmidrule(lr){8-9}
{$T$}
& {$t_{\mathrm{set}}$} & {$t_{\mathrm{ev}}$}
& {$t_{\mathrm{set}}$} & {$t_{\mathrm{ev}}$}
& {$t_{\mathrm{set}}$} & {$t_{\mathrm{ev}}$}
& {$t_{\mathrm{set}}$} & {$t_{\mathrm{ev}}$} \\
\midrule
0 & $\boldsymbol{6.4 \times 10^{-4}}$ & $5.1 \times 10^{-5}$
  & $\boldsymbol{1.2 \times 10^{-3}}$ & $2.0 \times 10^{-4}$
  & $\boldsymbol{4.0 \times 10^{-3}}$ & $9.4 \times 10^{-4}$
  & $\boldsymbol{1.4 \times 10^{-2}}$ & $7.8 \times 10^{-3}$ \\
1 & $\boldsymbol{1.0 \times 10^{-3}}$ & $1.7 \times 10^{-4}$
  & $\boldsymbol{2.1 \times 10^{-3}}$ & $8.4 \times 10^{-4}$
  & $\boldsymbol{1.5 \times 10^{-2}}$ & $1.0 \times 10^{-2}$
  & $\boldsymbol{0.126}$             & $0.105$ \\
2 & $\boldsymbol{2.6 \times 10^{-3}}$ & $4.5 \times 10^{-4}$
  & $\boldsymbol{5.9 \times 10^{-3}}$ & $4.0 \times 10^{-3}$
  & $0.056$ & $\boldsymbol{0.062}$
  & $0.866$ & $\boldsymbol{1.10}$ \\
3 & $\boldsymbol{4.2 \times 10^{-3}}$ & $6.1 \times 10^{-4}$
  & $\boldsymbol{0.019}$             & $0.015$
  & $0.222$ & $\boldsymbol{0.375}$
  & $4.31$  & $\boldsymbol{8.30}$ \\
4 & $\boldsymbol{0.019}$             & $1.6 \times 10^{-3}$
  & $\boldsymbol{0.047}$             & $0.047$
  & $0.733$ & $\boldsymbol{1.87}$
  & $17.3$  & $\boldsymbol{50.7}$ \\
5 & $\boldsymbol{0.104}$             & $2.8 \times 10^{-3}$
  & $\boldsymbol{0.211}$             & $0.164$
  & $2.21$  & $\boldsymbol{8.18}$
  & $59.0$  & $\boldsymbol{268}$  \\
6 & $\boldsymbol{0.653}$             & $7.0 \times 10^{-3}$
  & $\boldsymbol{1.52}$              & $0.518$
  & $6.79$  & $\boldsymbol{32.5}$
  & $180$   & $\boldsymbol{1266}$ \\
7 & $\boldsymbol{3.64}$              & $0.017$
  & $\boldsymbol{7.65}$              & $1.57$
  & $22.3$  & $\boldsymbol{119}$
  & $501$   & $\boldsymbol{5562}$ \\
8 & $\boldsymbol{19.9}$              & $0.039$
  & $\boldsymbol{40.1}$              & $4.56$
  & $83.3$  & $\boldsymbol{414}$
  & $1323$  & $\boldsymbol{22400}$ \\
9 & $\boldsymbol{100}$               & $0.091$
  & $\boldsymbol{201}$               & $12.8$
  & $346$   & $\boldsymbol{1368}$
  & \multicolumn{2}{c}{---} \\
\bottomrule
\end{tabular}
\end{sidewaystable}

\begin{figure}
\begin{tikzpicture} 
\begin{loglogaxis}[
xlabel={Number of SG points $N$}, 
ylabel={Relative $ \ell_{\infty} $-Error},
legend style={
        at={(1.05,1)},
        anchor=north west
    },
grid = both
]

\addplot[thick, color=red, mark = square*, mark options=solid] coordinates { 
(8,0.295268)
(20,0.0284344)
(50,0.0118325)
(123,0.0040213)
(297,0.001012)
(705,0.000327131)
(1649,6.84038e-05)
(3809,1.09155e-05)
(11777,2.37768e-06)
(28929,5.84017e-07)
};

\addplot[thick, color=orange, mark = 10-pointed star, mark options=solid] coordinates { 
(16,0.411956)
(48,0.0542485)
(136,0.0151987)
(368,0.00520534)
(961,0.00146655)
(2441,0.000277177)
(6065,8.1983e-05)
(14801,1.46806e-05)
(35585,4.44264e-06)
(84481,1.30049e-06)
};

\addplot[thick, color=violet, mark = star, mark options=solid] coordinates { 
(32,0.536573)
(112,0.0927749)
(352,0.0148748)
(1032,0.00511456)
(2882,0.000929153)
(7763,0.000259899)
(20333,6.28946e-05)
(52073,1.54299e-05)
(151393,3.05376e-06)
(405953,4.80686e-07)
};

\addplot[thick, color=blue, mark = triangle*, mark options=solid] coordinates { 
(64,0.669968)
(256,0.145506)
(880,0.0207403)
(2768,0.00808227)
(8204,0.00165456)
(23288,0.000343384)
(63953,0.000121919)
(171053,2.16062e-05)
(447713,5.39806e-06)
(1150753,1.16354e-06)
};

\pgfmathsetmacro{\Cval}{0.295268 / (8^(-1.5))}

\addplot[
    domain=8:50000,
    samples=200,
    thick,
    dashed,
    color=black
]
{ \Cval * x^(-1.5) };

\pgfmathsetmacro{\Cfour}{0.411956 / (16^(-1.5))}

\addplot[
    domain=16:120000,
    samples=200,
    thick,
    dashed,
    color=black
]
{ \Cfour * x^(-1.5) };

\pgfmathsetmacro{\Cfive}{0.536573 / (32^(-1.5))}

\addplot[
    domain=32:450000,
    samples=200,
    thick,
    dashed,
    color=black
]
{ \Cfive * x^(-1.5) };

\pgfmathsetmacro{\Csix}{900}

\addplot[
    domain=64:1500000,
    samples=200,
    thick,
    dashed,
    color=black
]
{ \Csix * x^(-1.5) };
 
\legend{$d=3$,$d=4$, $d=5$, $d=6$, $ N^{-1.5}$}
\end{loglogaxis}
\end{tikzpicture}
\caption{Relative errors for the isotropic TPML interpolation of $ \ff \equiv 1 $ for $ d = 3,4,5,6$. The observed decay matches the predicted rate of $ N^{-1.5} $.}
\label{fig:RelativeErrorsIsotropic}
\end{figure}

Additionally, we report the interpolation errors for the function $\ff \equiv 1$ in \cref{fig:RelativeErrorsIsotropic}, computed using the error estimator described at the beginning of this section and the same parameters as in the runtime experiments. 
The results are shown in a log-log plot for dimensions $d=3,4,5,6$. In all cases, the error exhibits a clear algebraic decay proportional to $N^{-1.5}$, in agreement with the theoretical prediction in \cref{cor:ErrorEstimateInN}, up to the polylogarithmic factor. Notably, this behaviour is already observed for small values of $N$, indicating that the asymptotic regime, in this easy example, is reached early.

\subsection{A $1+2+3$-dimensional Anisotropic Example}
\label{subsec:Anisotropic}

\begin{figure}
\centering
\begin{tikzpicture}
\begin{loglogaxis}[
    xlabel = {Number of SG points $N$}, ylabel = {Relative $\ell_\infty$-Error},
    legend style = {at = {(1.03,1.0)}, anchor = north west}, grid = both, width = 0.72\textwidth,
]
\addplot[thick, red, mark=square*, mark options=solid] coordinates {
  (1373508,0.0155606)
  (4208532,0.00520564)
  (12128017,0.00136535)
  (31673762,0.00109879)
  (81835352,0.000337737)
  (207322101,0.000239578)
};
\addplot[thick, blue, mark=triangle*, mark options=solid] coordinates {
  (771120,0.0217425)
  (2326488,0.01735)
  (6892708,0.00563152)
  (14705225,0.00501613)
  (33649894,0.00128264)
  (74946234,0.00109169)
  (153289908,0.00108256)
  (326919349,0.0003507684043)
};
\addplot[thick, brown, mark=*, mark options=solid] coordinates {
  (771120,0.0212064)
  (2326488,0.0171346)
  (6892708,0.00563152)
  (17793377,0.00135085)
  (40885162,0.00101108)
  (82624488,0.00100243)
  (193245833,0.000363453)};
\addplot[black, dashed, domain=6e5:3e8, samples=2] {2044*x^(-0.83333)};
\legend{Accuracy equ., DOF equ., Ratio equ., $N^{-5/6}$}
\end{loglogaxis}
\end{tikzpicture}
\caption{Relative $\ell_\infty$-error for the $1+2+3$ example as a function of the
number of sparse-grid points $N$ for the three weight strategies. The dashed guide
is the asymptotic rate $\beta^\ast = 5/6$. The per-step rate oscillates because the
anisotropic combination refines the directions in turn; the trend follows $\beta^\ast$.}
\label{fig:AnisotropicError}
\end{figure}

\begin{table}
\centering\small
\setlength{\tabcolsep}{4pt}
\caption{Raw point counts for the $1+2+3$ example. The left block lists the
univariate point-set sizes $N^{(j)}$ at refinement level $T$ (identical for all three
strategies); the right block lists the resulting sparse-grid size
$N=\dim V^{\oomega}_T$ at threshold $T$ for each weight strategy. Refining the two
$C^4$ directions most, the accuracy strategy is the most point-intensive, needing
roughly two to three times as many points as DOF at a given threshold.}
\label{tab:AnisotropicCounts}
\begin{tabular}{r rrr @{\hspace{1.6em}} rrr}
\toprule
 & \multicolumn{3}{c}{Univariate points $N^{(j)}$ (level $T$)}
 & \multicolumn{3}{c}{Sparse-grid points $N$ (threshold $T$)} \\
\cmidrule(lr){2-4}\cmidrule(lr){5-7}
$T$ & dir.\ 1 ($1$D) & dir.\ 2 ($2$D) & dir.\ 3 ($3$D) & accuracy & DOF & ratio \\
\midrule
1 & 9 & 116 & 420 & 1\,373\,508 & 771\,120 & 771\,120 \\
2 & 17 & 204 & 997 & 4\,208\,532 & 2\,326\,488 & 2\,326\,488 \\
3 & 33 & 353 & 2\,362 & 12\,128\,017 & 6\,892\,708 & 6\,892\,708 \\
4 & 65 & 637 & 5\,600 & 31\,673\,762 & 14\,705\,225 & 17\,793\,377 \\
5 & 129 & 1\,121 & 13\,273 & 81\,835\,352 & 33\,649\,894 & 40\,885\,162 \\
6 & 257 & 1\,977 & 31\,463 & 207\,322\,101 & 74\,946\,234 & 82\,624\,488 \\
7 & 513 & 3\,429 & 74\,578 & 511\,633\,111& 153\,289\,908 & 193\,245\,833 \\
\bottomrule
\end{tabular}
\end{table}

\begin{figure}
\centering
\begin{tikzpicture}
\begin{axis}[
x=3.0cm, 
height=4.0cm, 
xmin=-1,xmax=1, 
ymin=-0.05,ymax=1.05,
xlabel={$x$}, ylabel={$f^{(1)}(x)$}, title={$f^{(1)}$ on $\Omega^{(1)}=[-1,1]$},
    title style={font=\small}, tick label style={font=\footnotesize}, grid=both]
  \addplot[thick, black] table {target_dir0.dat};
  \addplot[red, only marks, mark=|, mark size=9pt] coordinates {(-0.5,0)(0.7,0)}; 
\end{axis}
\end{tikzpicture}\hfill
\begin{tikzpicture}
\begin{axis}[axis equal image, view={0}{90}, colormap/viridis reversed,
    point meta min=0, point meta max=1, enlargelimits=false,
    axis on top, xmin=-1,xmax=1, ymin=-1,ymax=1, xlabel={$x_1$}, ylabel={$x_2$},
    title={$f^{(2)}$ on $\Omega^{(2)}=B_1[\mathbf 0]$}, title style={font=\small},
    tick label style={font=\footnotesize}, colorbar, colorbar style={width=4pt}]
  \clip (axis cs:0,0) circle [radius=1];
  \addplot3[surf, shader=interp, mesh/ordering=x varies] table {target_dir1.dat};
  \draw[black,thick] (axis cs:0,0) circle [radius=1];
  \draw[black,thick] (axis cs:0.2,-0.3) circle [radius=0.6];
\end{axis}
\end{tikzpicture}

\vspace{0.6em}
\begin{tikzpicture}
\begin{axis}[axis equal image, view={0}{90}, colormap/viridis reversed,
    point meta min=0, point meta max=1, enlargelimits=false,
    axis on top, xmin=-1,xmax=1, ymin=-1,ymax=1, xlabel={$x_1$}, ylabel={$x_2$},
    title={$f^{(3)}$ on $\Omega^{(3)}=[-1,1]^3$, slice $x_3=c^{(3)}_3$},
    title style={font=\small}, tick label style={font=\footnotesize},
    colorbar, colorbar style={width=4pt}]
  \addplot3[surf, shader=interp, mesh/ordering=x varies] table {target_dir2.dat};
  \draw[black,thick] (axis cs:-1,-1) rectangle (axis cs:1,1);
  \draw[black,thick] (axis cs:0.2,-0.1) ellipse [x radius=0.6, y radius=0.7]; 
\end{axis}
\end{tikzpicture}
\caption{The three target factor functions $f^{(j)}(\xx)=(1-Q_j(\xx))_+^{\kappa_j}$,
$\kappa=(2,4,4)$, each with a conormal kink across the quadric $\{Q_j=1\}$: two
points in the $1$D interval, the circle in the $2$D disk, and (shown as the
central slice $x_3=c^{(3)}_3$) the ellipsoid in the $3$D cube. The target is the
product $\ff=f^{(1)}f^{(2)}f^{(3)}$ with $\|\ff\|_\infty=1$.}
\label{fig:AnisotropicTargets}
\end{figure}

\begin{figure}
\centering
\begin{minipage}[b]{0.46\textwidth}\centering
\begin{tikzpicture}
\begin{axis}[width=\linewidth, height=3.2cm, xmin=-1.05,xmax=1.05, ymin=-0.5,ymax=0.5,
    ytick=\empty, 
	title={$\Omega^{(1)}$}, title style={font=\small},
    xtick={-1,0,1},
	axis y line=none, axis x line=middle,
    enlargelimits=false]
  \addplot[red, thick, only marks, mark=|, mark size=6pt] coordinates {(-0.5,0)(0.7,0)};
  \addplot[only marks, mark=*, mark size=1pt, blue] table[x=x, y expr=0] {errpts_dir0.dat};
\end{axis}
\end{tikzpicture}
\end{minipage}\hfill
\begin{minipage}[b]{0.46\textwidth}\centering
\begin{tikzpicture}
\begin{axis}[
width=\linewidth,
axis equal image, 
xlabel={$x_1$}, ylabel={$x_2$},
xtick={-1,0,1}, ytick={-1,0,1},
 tick label style={font=\footnotesize},
    xmin=-1.05,xmax=1.05, ymin=-1.05,ymax=1.05, title={$\Omega^{(2)}$}, title style={font=\small}]
  \draw[black!55,thick] (axis cs:0,0) circle [radius=1];
  \draw[red,thick] (axis cs:0.2,-0.3) circle [radius=0.6];
  \addplot[only marks, mark=*, mark size=1pt, blue] table {errpts_dir1.dat};
\end{axis}
\end{tikzpicture}
\end{minipage}\hfill

\vspace{0.8em}
\begin{minipage}[b]{0.62\textwidth}\centering
\begin{tikzpicture}
\begin{axis}[
width=1.35\linewidth, 
view={120}{22}, xmin=-1,xmax=1, ymin=-1,ymax=1, zmin=-1,zmax=1,
    xlabel={$x_1$}, ylabel={$x_2$}, zlabel={$x_3$}, 
    title={$\Omega^{(3)}$},
    title style={font=\small}, tick label style={font=\footnotesize},
    xtick={-1,0,1}, ytick={-1,0,1}, ztick={-1,0,1},
    scale mode=scale uniformly, grid=both]
  \addplot3[only marks, mark=*, mark size=1pt, blue] table {errpts_dir2.dat};
  \addplot3[ surf, opacity=0.28, fill opacity=0.22, faceted color=gray!50, shader=flat, domain=0:360, y domain=0:180, samples=24, samples y=16, z buffer=sort, ] ( {0.2 + 0.6*cos(x)*sin(y)}, {-0.1 + 0.7*sin(x)*sin(y)}, {0.15 + 0.5*cos(y)} );
\end{axis}
\end{tikzpicture}
\end{minipage}
\caption{The $31$/$106$/$108$ deterministic $\ell_\infty$ evaluation points (blue),
built once and reused at every threshold and for every weight strategy. Geometric
transverse shells straddle each kink quadric $\{Q_j = 1\}$ (the two red ticks in
$1$D, the red circle in $2$D, the ellipsoid in the $3$D cube), reaching in to
$\approx h_{\min}$, complemented by a domain-boundary and low-discrepancy interior
background. The $6$-dimensional test grid is the tensor product of the three sets.}
\label{fig:AnisotropicErrorPoints}
\end{figure}

The second test exemplifies the versatility of the TPML method and tests the
convergence results and weight choices of
\Cref{sec:ConvergenceAnalysisAndOptimalWeights}. We use a $3$-directional setting
of total dimension $6$, but with deliberately different intrinsic dimensions,
domains and point generation methods in each
direction. For $j = 1$ we set $\Omega^{(1)} = [-1,1]$ and use uniform
grids with refinement parameter $\mu_1 = 1/2$. For $j = 2$ we set
$\Omega^{(2)} = B_1[\00]$, the closed unit ball in $\R^2$, and use the
thinning algorithm of \cite{Griebel2025} with $\mu_2 = 3/4$, starting from
$3 \times 10^6$ candidate points. For $j = 3$ we set $\Omega^{(3)} = [-1,1]^3$ and use Halton sequences with $\mu_3 = 3/4$.

The target function $\ff \colon \OOmega \to \R$ is a product of truncated-power
bumps, each with a conormal singularity across an interior quadric,
\begin{align*}
 \ff(\xx^{(1)}, \xx^{(2)}, \xx^{(3)}) = \prod_{j=1}^{3} f^{(j)}(\xx^{(j)}),
 \qquad
 f^{(j)}(\xx^{(j)}) = \left(1 - Q_j(\xx^{(j)})\right)_+^{\kappa_j},
\end{align*}
where $Q_j(\xx) = \sum_{i=1}^{n_j} \left((x_i - c^{(j)}_i)/a^{(j)}_i\right)^2$ and
$\kkappa = (2, 4, 4)$. The singular set $\{Q_j = 1\}$ is the pair of points
$c^{(1)}_1 \pm a^{(1)}_1$ in $\Omega^{(1)}$, an ellipse in $\Omega^{(2)}$, and an
ellipsoid in $\Omega^{(3)}$. The centres are $\cc^{(1)} = 0.1$,
$\cc^{(2)} = (0.2,-0.3)^\transpose$, $\cc^{(3)} = (0.2,-0.1,0.15)^\transpose$ and the
semi-axes are $\aa^{(1)} = 0.6$, $\aa^{(2)} = (0.6,0.6)$, $\aa^{(3)} = (0.6,0.7,0.5)$,
chosen so that each quadric lies strictly inside its domain, see
\cref{fig:AnisotropicTargets}. Since $(t)_+^{\kappa}$ lies in
$H^{\kappa + 1/2 - \varepsilon}$ and $Q_j$ has non-vanishing gradient on the regular
level set $\{Q_j = 1\}$, we have $f^{(j)} \in H^{\kappa_j + 1/2 - \varepsilon}(\Omega^{(j)})$
independently of the ambient dimension $n_j$, hence
$\ff \in H^{\ssigma}_{mix}(\OOmega)$ with
$\ssigma_{\mathrm{target}} = (2.5, 4.5, 4.5)$. The product is bounded with
$\|\ff\|_\infty = 1$, attained where all three $Q_j = 0$.

The kernels are chosen distinct per direction, matched in order to the intrinsic
dimension so that the three weight strategies differ in every direction. For
$j = 1$ we use the $C^2$-, and for $j = 2,3$ the $C^4$-Wendland kernel,
\begin{align*}
 \phi_{1,1}(r) = (1-r)_+^3(3r+1), \qquad
 \phi_{2,2}(r) = \phi_{3,2}(r) = (1-r)_+^6(35r^2 + 18r + 3),
\end{align*}
the reproducing kernels of $H^{2}(\R)$, $H^{3.5}(\R^2)$, and $H^{4}(\R^3)$,
respectively. We remark that the $C^4$-Wendland function has the same polynomial form in $\R^2$ and
$\R^3$ but a dimension-dependent native space. The overlap parameters are chosen per direction as
$\nnu = (12.0,\, 8.0,\, 4.0)$. This yields on average about
$10$, $50$, and $25$ neighbours in the scaled supports, keeping the local systems
well conditioned.

In each direction the first $o_j$ levels serve only to initialise the residual
hierarchy and are not included in the sparse grid combination. The approximation
quality at the first active level is of order $\alpha_j^{o_j}$ with
$\alpha_j = \mu_j^{\sigma_j - n_j/2}$. If $o_j$ were uniform across directions, the
slow-refining directions $j = 2,3$ ($\mu = 3/4$) would remain far from their
asymptotic regime while the fast-refining direction $j = 1$ ($\mu_1 = 1/2$) is
already well converged, causing non-monotone behaviour in the combination. We
therefore set $(o_1, o_2, o_3) = (3, 7, 7)$, giving the two slow directions the
longer warm-up so that all three multilevel operators enter their convergent regime
before the combination begins.

Because $\kappa_j + 1/2$ exceeds the order $\sigma_j$ of the native space in every
direction, the target lies strictly inside each native space, so the rate is
kernel-limited: the effective smoothness is $\ssigma = (2.0, 3.5, 4.0)$ and
\eqref{eq:BetaStar} gives
\begin{align*}
 \beta^* = \min_{j=1,2,3} \frac{\sigma_j - n_j/2}{n_j}
         = \min\!\left\{ \frac{2.0-0.5}{1},\, \frac{3.5-1}{2},\, \frac{4.0-1.5}{3} \right\}
         = \frac{5}{6},
\end{align*}
attained in direction $j = 3$. By \cref{thrm:RangeOfOmegaForBestConvergenceRate}, every
weight vector satisfying \eqref{eq:conditionWeights} achieves this rate. We compare
all three strategies of \Cref{subsec:SelectionOfOptimalWeights}. With
$a_j := \log_2(1/\mu_j) = (1,\, 0.415,\, 0.415)$ they evaluate to the following.

\textbf{Accuracy equilibrated weights.}
The accuracy formula $\omega_j \propto (\sigma_j - n_j/2)\,a_j$ gives, after
normalisation, $\oomega_{\mathrm{acc}} \approx (1.45,\, 1.00,\, 1.00)$. Directions
$2$ and $3$ receive the most levels: their slowly refining grids ($\mu = 3/4 $)
make the per-level error reduction $\alpha_{2,3} = \mu^{5/2}$ the largest among the
three, so they need the most refinement to balance the directional errors.

\textbf{Degree of freedom equilibrated weights.}
The DOF formula $\omega_j \propto n_j\,a_j$ gives
$\oomega_{\mathrm{DOF}} \approx (1.20,\, 1.00,\, 1.50)$. Direction $3$ has the
fastest-growing point sets and is therefore assigned the largest weight, capping its maximum level at
$\lfloor T/1.5 \rfloor$ and keeping the point counts roughly balanced.

\textbf{Cost-benefit equilibrated weights.}
The ratio formula $\omega_j \propto (\sigma_j + n_j/2)\,a_j$ gives
$\oomega_{\mathrm{ratio}} \approx (1.34,\, 1.00,\, 1.22)$. These interpolate between
the previous two: direction $3$ is penalised less than under DOF equilibration, as
its higher benefit-per-level partially offsets the rapid growth of its point sets.

\cref{tab:AnisotropicCounts} lists the univariate point counts per refinement level
together with the resulting sparse-grid sizes for the three strategies, showing how
the same direction-wise hierarchies are redistributed into sparse grids of very different
total size, with the accuracy strategy the most point-intensive.

The error is measured in a relative $\ell_\infty$ norm against the fixed constant
$\|\ff\|_\infty = 1$, on a deterministic set of evaluation points that is built once
and reused at every threshold and for every strategy. In each direction the points
consist of geometric shells straddling the kink quadric $\{Q_j = 1\}$ transversely,
down to the finest fill distance, together with a domain-boundary and
low-discrepancy interior background, see \cref{fig:AnisotropicErrorPoints}. The $6$-dimensional test grid is their tensor
product. As a validation, the location of
the maximum error is recorded at every threshold. It consistently sits at the kink of
direction $3$, where $\beta^*$ is attained, with $\ff$ nonzero there,
confirming that the reported error is the genuine finite-smoothness error of the
rate-limiting direction rather than a boundary or exterior-leakage artefact.

The three strategies converge monotonically over more than two orders of magnitude
in $N$, reaching a relative error of about $3 \times 10^{-4}$, as shown in
\cref{fig:AnisotropicError}. A least-squares fit of the error against $N$ gives
slopes of $0.84$ (accuracy), $0.77$ (ratio), and $0.69$ (DOF), clustering around the
predicted $\beta^* = 5/6 \approx 0.833$. The per-step rates oscillate because of the
anisotropic index set, in which the direction with the greatest gain in the error is not always the one that is refined, so every second threshold
grows $N$ without advancing the rate-limiting cube direction. The trend, captured by
the least-squares slope, follows $\beta^*$. Unlike the smooth benchmark of
\Cref{subsec:StandardSparseGrid}, whose asymptotic rate is visible already at small
$N$, the present target sits only half an order of smoothness inside each native
space, with the deficit concentrated on the kink quadrics, which enlarges the
convergence constant and delays the onset of the asymptotic regime. Nevertheless, the observed
slopes are consistent with $\beta^*$ up to this preasymptotic effect. The accuracy strategy reaches a given
error at a markedly lower point count than DOF and ratio, but uses for the same threshold $ T $ about double the number of points, see \cref{tab:AnisotropicCounts}.

\subsection{A $10$-dimensional Non-Tensorproduct Example}
\label{subsec:Genz}

To demonstrate the applicability of the proposed method beyond
tensor product functions, we test it on the \emph{oscillatory}
function of the Genz testing package \cite{Genz1984},
\begin{align*}
 \ff: [0,1]^{10} \to \R, \quad
 \ff(\xx) = \cos\!\left( 2\pi\theta + \sum_{j=1}^{10} c_j x_j \right),
\end{align*}
where $\theta = 0.5$ is a phase shift and $\cc \in \R^{10}$, $c_j > 0$,
controls the oscillation frequency. Following \cite{Novak1996}, we draw
$\widetilde{\cc}$ uniformly at random and normalize to
$\sum_{j=1}^{10} \widetilde{c}_j = 9.0$, then set $c_j = \widetilde{c}_j$.

Since $\ff \in C^{\infty}([0,1]^{10})$ has no a-priori identifiable
anisotropy, we set up a completely isotropic TPML with weight vector
$\oomega = \11$. As basis functions we use one-dimensional $C^4$-Wendland
kernels
\begin{align*}
 \phi_{1,2}(r) = (1-r)^5_+ (8r^2 + 5r +1),
\end{align*}
which are the reproducing kernels of $H^3(\R)$, with support radius chosen
so that on average $16$ points lie in the support. We discretize each
$[0,1]$ by a uniform grid with refinement parameter $\mu = 1/2$,
starting from $N_1 = 2$ points.

In \cref{tab:GenzRates} we report the number of sparse grid points $ N $, the relative $\ell_\infty$-error and the empirical algebraic convergence rates $ \beta_N $. The error decreases monotonically over roughly seven orders of magnitude up to $ T = 11 $, confirming that the method converges cleanly even though $ \ff $ is not of tensor product form. The pointwise rates $ \beta_N $ fluctuate between about $ 1.0 $ and $ 2.1 $ around an overall fitted value of $ \approx 1.4 $. 

The measured rate stays well below the asymptotic value of $ \beta^* = 5/2 $ of \eqref{eq:BetaStar}, and this is to be expected. For the isotropic setting the error estimate carries a polylogarithmic factor, so that at $ \beta = \beta^* = 5/2 $ and $ d = 10 $ the estimate reads 
\begin{align*}
 \| \ff - \cA_T^{\oomega}\ff \|_{L_{\infty}(\OOmega)} \leq C N^{-\frac{5}{2}} (\log(N))^{27},
\end{align*}
with exponent $ 4.5 + 9 \cdot 5/2 = 27 $. Its local slope is $ \beta^* - 27/(\log(N)) $, which for the grids reached here ($\log(N) \approx 16 $ to $ 21 $) lies between about $ 1.0 $ and $1.3 $, of the same order as the observed $ \beta_N $ once the level-to-level oscillation is averaged out. Isolating the bare algebraic rate would require the algebraic factor to dominate $ (\log(N))^{27} $. Pushing the effective slope even to $ 2 $ would need $ N \geq e^{54} \approx 3 \times 10^{23} $ points, far beyond any feasible computation. The experiment is therefore deep in the preasymptotic regime, and the observed rates are precisely those admitted by the estimate of \cref{cor:ErrorEstimateInN} rather than a contradiction of $ \beta^* $.

\begin{table}
\centering
\caption{Convergence rates for the $10$-dimensional Genz experiment. $ \beta_N $ denotes the empirical algebraic rate in $ N $.}
\label{tab:GenzRates}
\begin{tabular}{r r r r}
\toprule
$T$ & $N$ & Rel.\ error & $\beta_N$ \\
\midrule
1  & 6\,144           & $1.32 \times 10^{-1}$ & ---  \\
2  & 27\,904          & $1.71 \times 10^{-2}$ & 1.35 \\
3  & 109\,824         & $3.92 \times 10^{-3}$ & 1.08 \\
4  & 394\,624         & $1.06 \times 10^{-3}$ & 1.03 \\
5  & 1\,329\,408      & $1.38 \times 10^{-4}$ & 1.67 \\
6  & 4\,265\,248      & $2.58 \times 10^{-5}$ & 1.44 \\
7  & 13\,166\,688     & $2.45 \times 10^{-6}$ & 2.09 \\
8  & 39\,385\,108     & $8.02 \times 10^{-7}$ & 1.02 \\
9  & 114\,749\,288    & $1.90 \times 10^{-7}$ & 1.35 \\
10 & 326\,904\,337    & $1.98 \times 10^{-8}$ & 2.16 \\
11 & 913\,392\,437    & $6.83 \times 10^{-9}$ & 1.03 \\
\bottomrule
\end{tabular}

\end{table}

\begin{figure}
\centering
\begin{tikzpicture}
\begin{loglogaxis}[
    xlabel = {Number of SG points $N$},
    ylabel = {Relative $\ell_\infty$-Error},
	legend style= {
		at={(1.05,1.0)},
		anchor = north west
	},
    ymin = 1e-10, ymax = 5e-1,
    grid = both,
    minor tick num = 1
]

\addplot[thick, color=blue, mark=*, mark options=solid]
  coordinates {
    (6144,        1.32e-1)
    (27904,       1.71e-2)
    (109824,      3.92e-3)
    (394624,      1.06e-3)
    (1329408,     1.38e-4)
    (4265248,     2.58e-5)
    (13166688,    2.45e-6)
    (39385108,    8.02e-7)
    (114749288,   1.90e-7)
    (326904337,   1.98e-8)
    (913392437,   6.83e-9)
  };
\addlegendentry{Genz oscillatory}
\addplot[thick, dotted, domain=6144:913392437, samples=100]
    {2.65e4*x^(-1.4)};
\addlegendentry{$N^{-1.4}$}
\end{loglogaxis}
\end{tikzpicture}
\caption{Relative $\ell_\infty$-error for the TPML interpolation of the Genz
oscillatory function on $[0,1]^{10}$ with isotropic weights $\oomega = \11$
and $C^4$-Wendland kernels. The observed decay follows the
preasymptotic slope $N^{-1.4}$.}
\label{fig:GenzError}
\end{figure}

\section{Conclusion and Outlook}\label{sec:Conclusion}

We have revisited the tensor product multilevel method of \cite{Kempf2023}
and addressed three open problems left by that work.
Using the nodal representation of \cite{Gollwitzer2026}, we derived a
numerically feasible formulation of the high-dimensional TPML operator,
expressed as an explicit linear combination of data values weighted, in part, by
precomputed direction-wise basis functions.
Building on this, we established error estimates in general
$W^{\ttau,q}_{mix}$ and $C^{\ttau}_{b,mix}$ norms, extending the
existing $L_2$-theory.
Finally, we characterised the full range of weight vectors attaining
the optimal rate $\beta^*$ and derived three explicit weight strategies
that all achieve it.
The numerical experiments support the theoretical predictions and
demonstrate that the method is applicable beyond the tensor product
function class.

Several directions remain open. First, the cheap computation of the nodal representation currently requires nested point sets. Extending it to non-nested sequences
would broaden the method's applicability. Second, the weight strategies of
\Cref{subsec:SelectionOfOptimalWeights} rely on knowledge of the
smoothness parameters $\sigma_j$. A data-driven or adaptive selection
of $\oomega$ is desirable in practice. A dimension-adaptive variant,
in which the index set is built greedily from error indicators rather
than prescribed a priori, could handle functions with unknown
anisotropy. Finally, combining the offline phase with
$\mathcal{H}$-matrix compression \cite{Boerm2010} or Samplets
\cite{Harbrecht2022} would extend the method to non-local kernels
at nearly linear cost.


\newpage

\begin{thebibliography}{10}

\bibitem{Aubin2011}
{\sc J.~Aubin}, {\em Applied Functional Analysis}, Pure and Applied
  Mathematics: A Wiley Series of Texts, Monographs and Tracts, Wiley, 2011,
  \url{https://books.google.de/books?id=6iyejYSgM3wC}.

\bibitem{Avesani2025}
{\sc S.~Avesani, R.~Kempf, M.~Multerer, and H.~Wendland}, {\em Multiscale
  scattered data analysis in samplet coordinates}, SIAM Journal on Scientific
  Computing, 47 (2025), pp.~A3038--A3063,
  \url{https://doi.org/10.1137/24M1696305}.

\bibitem{Bartel2023}
{\sc F.~Bartel, M.~Sch{{\"a}}fer, and T.~Ullrich}, {\em Constructive
  subsampling of finite frames with applications in optimal function recovery},
  Applied and Computational Harmonic Analysis, 65 (2023), pp.~209--248,
  \url{https://doi.org/https://doi.org/10.1016/j.acha.2023.02.004},
  \url{https://www.sciencedirect.com/science/article/pii/S1063520323000192}.

\bibitem{Barthelmann2000}
{\sc V.~Barthelmann, E.~Novak, and K.~Ritter}, {\em High dimensional polynomial
  interpolation on sparse grids}, vol.~12, 2000, pp.~273--288.
\newblock Multivariate polynomial interpolation.

\bibitem{Bellman1957}
{\sc R.~E. Bellman}, {\em Dynamic programming}, Princeton University Press,
  Princeton, NJ, Oct. 1957.

\bibitem{Boerm2010}
{\sc S.~Borm}, {\em Efficient numerical methods for non-local operators}, EMS
  Tracts in Mathematics, European Mathematical Society, Z{\"u}rich,
  Switzerland, Dec. 2010.

\bibitem{Bungartz2004}
{\sc H.-J. Bungartz and M.~Griebel}, {\em Sparse grids}, Acta Numerica, 13
  (2004), p.~147–269, \url{https://doi.org/10.1017/S0962492904000182}.

\bibitem{Bungartz1994}
{\sc H.-J. Bungartz, M.~Griebel, D.~R\"oschke, and C.~Zenger}, {\em Two proofs
  of convergence for the combination technique for the efficient solution of
  sparse grid problems}, in Domain Decomposition Methods in Scientific and
  Engineering Computing, DDM7, D.~E. Keyes and J.~Xu, eds., Contemp. Math. 180,
  American Mathematical Society, Providence, 1994, pp.~15--20.

\bibitem{Buettner2025}
{\sc M.~B\"{u}ttner, R.~Kempf, and H.~Wendland}, {\em Numerical aspects of the
  tensor product multilevel method for high-dimensional, kernel-based
  reconstruction on sparse grids}, Journal of Scientific Computing, 106 (2025),
  \url{https://doi.org/10.1007/s10915-025-03144-0},
  \url{http://dx.doi.org/10.1007/s10915-025-03144-0}.

\bibitem{Conrad2013}
{\sc P.~R. Conrad and Y.~M. Marzouk}, {\em Adaptive smolyak pseudospectral
  approximations}, SIAM Journal on Scientific Computing, 35 (2013),
  pp.~A2643--A2670.

\bibitem{Floater1996}
{\sc M.~S. Floater and A.~Iske}, {\em Multistep scattered data interpolation
  using compactly supported radial basis functions}, Journal of Computational
  and Applied Mathematics, 73 (1996), pp.~65--78,
  \url{https://doi.org/https://doi.org/10.1016/0377-0427(96)00035-0},
  \url{https://www.sciencedirect.com/science/article/pii/0377042796000350}.

\bibitem{Garcke2007}
{\sc J.~Garcke}, {\em A dimension adaptive sparse grid combination technique
  for machine learning}, in Proceedings of the 13th Biennial Computational
  Techniques and Applications Conference, CTAC-2006, W.~Read, J.~W. Larson, and
  A.~J. Roberts, eds., vol.~48 of ANZIAM J., Dec. 2007, pp.~C725--C740.

\bibitem{Garcke2009}
{\sc J.~Garcke and M.~Hegland}, {\em Fitting multidimensional data using
  gradient penalties and the sparse grid combination technique}, Computing, 84
  (2009), pp.~1--25.

\bibitem{Genz1984}
{\sc A.~Genz}, {\em Testing multidimensional integration routines}, in Proc. of
  International Conference on Tools, Methods and Languages for Scientific and
  Engineering Computation, USA, 1984, Elsevier North-Holland, Inc., p.~81–94.

\bibitem{LeGia2010}
{\sc Q.~T.~L. Gia, I.~H. Sloan, and H.~Wendland}, {\em Multiscale analysis in
  sobolev spaces on the sphere}, {SIAM} J. Numer. Anal., 48 (2010),
  pp.~2065--2090.

\bibitem{Gollwitzer2026}
{\sc L.~Gollwitzer, R.~Kempf, and H.~Wendland}, {\em Nodal representations for
  kernel-based multilevel interpolation}, 2026,
  \url{https://arxiv.org/abs/2606.16643},
  \url{https://arxiv.org/abs/2606.16643}.

\bibitem{Griebel2012}
{\sc M.~Griebel and H.~Harbrecht}, {\em On the construction of sparse tensor
  product spaces}, Mathematics of Computation, 82 (2012), p.~975–994,
  \url{https://doi.org/10.1090/s0025-5718-2012-02638-x},
  \url{http://dx.doi.org/10.1090/S0025-5718-2012-02638-X}.

\bibitem{Griebel2013}
{\sc M.~Griebel and H.~Harbrecht}, {\em A note on the construction of l-fold
  sparse tensor product spaces}, Constructive Approximation, 38 (2013),
  p.~235–251, \url{https://doi.org/10.1007/s00365-012-9178-7},
  \url{http://dx.doi.org/10.1007/s00365-012-9178-7}.

\bibitem{griebel2025:2}
{\sc M.~Griebel and H.~Harbrecht}, {\em Kernel interpolation in sobolev spaces
  of hybrid regularity}, 2025, \url{https://arxiv.org/abs/2512.12684},
  \url{https://arxiv.org/abs/2512.12684}.

\bibitem{Griebel2025}
{\sc M.~Griebel, H.~Harbrecht, and M.~Multerer}, {\em Kernel interpolation on
  generalized sparse grids}, SIAM Journal on Mathematics of Data Science, 8
  (2026), pp.~335--361, \url{https://doi.org/10.1137/25M1761264},
  \url{https://doi.org/10.1137/25M1761264},
  \url{https://arxiv.org/abs/https://doi.org/10.1137/25M1761264}.

\bibitem{Griebel1990}
{\sc M.~Griebel, M.~Schneider, and C.~Zenger}, {\em A combination technique for
  the solution of sparse grid problems}, Forschungsberichte, TU Munich, TUM I
  9038 (1990), pp.~1--24,
  \url{https://api.semanticscholar.org/CorpusID:16460274}.

\bibitem{Hackbusch2012}
{\sc W.~Hackbusch}, {\em Tensor Spaces and Numerical Tensor Calculus},
  Spring\-er Series in Computational Mathematics, Springer Berlin Heidelberg,
  2012, \url{https://books.google.de/books?id=a5P71o6xcNMC}.

\bibitem{Hansen2010}
{\sc M.~Hansen}, {\em On tensor products of quasi-banach spaces}, tech. report,
  2010, \url{https://doi.org/10.3929/ETHZ-A-010388317},
  \url{http://hdl.handle.net/20.500.11850/154930}.

\bibitem{Harbrecht2022}
{\sc H.~Harbrecht and M.~Multerer}, {\em Samplets: Construction and scattered
  data compression}, J. Comput. Phys., 471 (2022), p.~111616.

\bibitem{Hegland2007}
{\sc M.~Hegland, J.~Garcke, and V.~Challis}, {\em The combination technique and
  some generalisations}, Linear Algebra and its Applications, 420 (2007),
  pp.~249--275,
  \url{https://doi.org/https://doi.org/10.1016/j.laa.2006.07.014},
  \url{https://www.sciencedirect.com/science/article/pii/S002437950600334X}.

\bibitem{Hegland2016}
{\sc M.~Hegland, B.~Harding, C.~Kowitz, D.~Pfl{\"{u}}ger, and P.~E. Strazdins},
  {\em Recent developments in the theory and application of the sparse grid
  combination technique}, in Software for Exascale Computing - {SPPEXA}
  2013-2015, H.~Bungartz, P.~Neumann, and W.~E. Nagel, eds., vol.~113 of
  Lecture Notes in Computational Science and Engineering, Springer, 2016,
  pp.~143--163, \url{https://doi.org/10.1007/978-3-319-40528-5\_7}.

\bibitem{Kempf2023}
{\sc R.~Kempf and H.~Wendland}, {\em High-dimensional approximation with
  kernel-based multilevel methods on sparse grids}, Numerische Mathematik, 154
  (2023), pp.~485--519,
  \url{http://nbn-resolving.org/urn:nbn:de:bvb:703-epub-7423-7}.

\bibitem{Light2006}
{\sc W.~Light and E.~Cheney}, {\em Approximation Theory in Tensor Product
  Spaces}, Lecture Notes in Mathematics, Springer Berlin Heidelberg, 2006,
  \url{https://books.google.de/books?id=vep7CwAAQBAJ}.

\bibitem{Lot2026}
{\sc F.~Lot and C.~Rieger}, {\em Efficiently parallelizable kernel-based
  multiscale algorithm}, IMA Journal of Numerical Analysis,  (2026),
  \url{https://doi.org/10.1093/imanum/draf127},
  \url{http://dx.doi.org/10.1093/imanum/draf127}.

\bibitem{Matern1986}
{\sc B.~Mat\'ern}, {\em Spatial Variation}, Springer New York, 1986,
  \url{https://doi.org/10.1007/978-1-4615-7892-5},
  \url{http://dx.doi.org/10.1007/978-1-4615-7892-5}.

\bibitem{Nobile2008}
{\sc F.~Nobile, R.~Tempone, and C.~G. Webster}, {\em An anisotropic sparse grid
  stochastic collocation method for partial differential equations with random
  input data}, SIAM Journal on Numerical Analysis, 46 (2008), pp.~2411--2442.

\bibitem{Novak1996}
{\sc E.~Novak and K.~Ritter}, {\em High-dimensional integration of smooth
  functions over cubes}, Numer. Math., 75 (1996), pp.~79--97.

\bibitem{Sickel2009}
{\sc W.~Sickel and T.~Ullrich}, {\em Tensor products of sobolev–besov spaces
  and applications to approximation from the hyperbolic cross}, Journal of
  Approximation Theory, 161 (2009), pp.~748--786,
  \url{https://doi.org/https://doi.org/10.1016/j.jat.2009.01.001},
  \url{https://www.sciencedirect.com/science/article/pii/S0021904509000197}.

\bibitem{Sickel2011}
{\sc W.~Sickel and T.~Ullrich}, {\em Spline interpolation on sparse grids},
  Appl. Anal., 90 (2011), pp.~337--383.

\bibitem{Smolyak1963}
{\sc S.~A. Smoljak}, {\em Quadrature and interpolation formulae on tensor
  products of certain function classes}, Dokl. Akad. Nauk SSSR, 148 (1963),
  pp.~1042--1045.

\bibitem{Ullrich2008}
{\sc T.~Ullrich}, {\em Smolyak's algorithm, sampling on sparse grids and
  {Sobolev} spaces of dominating mixed smoothness}, East J. Approx., 14 (2008),
  pp.~1--38.

\bibitem{Wendland1995}
{\sc H.~Wendland}, {\em Piecewise polynomial, positive definite and compactly
  supported radial functions of minimal degree}, Advances in Computational
  Mathematics, 4 (1995), p.~389–396,
  \url{https://doi.org/10.1007/bf02123482},
  \url{http://dx.doi.org/10.1007/BF02123482}.

\bibitem{Wendland2004}
{\sc H.~Wendland}, {\em Scattered Data Approximation}, Cambridge Monographs on
  Applied and Computational Mathematics, Cambridge University Press, 2004,
  \url{https://doi.org/10.1017/CBO9780511617539}.

\bibitem{Wendland2010}
{\sc H.~Wendland}, {\em Multiscale analysis in sobolev spaces on bound\-ed
  domains}, Numerische Mathematik, 116 (2010), pp.~493--517.

\bibitem{Wendland2017}
{\sc H.~Wendland}, {\em Multiscale radial basis functions}, in Frames and
  Oth\-er Bases in Abstract and Function Spaces : Novel Methods in Harmonic
  Analysis. Volume 1, Cham, 2017, Birkh{\"a}user, pp.~265--299.

\end{thebibliography}
\end{document}